\documentclass[12pt, oneside]{amsart}

\usepackage[utf8]{inputenc}

\usepackage{multirow}
\usepackage{amssymb,amsmath,amsfonts,amsthm}
\usepackage{hyperref}
\usepackage{verbatim}

\newcommand{\prk}{\operatorname{prk}}

\newcommand{\Aut}{\operatorname{Aut}}
\newcommand{\Inn}{\operatorname{Inn}}

\numberwithin{equation}{section}
\newcounter{parag}

\newtheorem{lemma}{Lemma}[section]
\newtheorem{theorem}{Theorem}

\newtheorem{prop}{Proposition}[section]

\theoremstyle{remark}
\newtheorem{rem}{Remark}[section]

\begin{document}

\vspace{1cm}

\title{\textsc{Recognition by element orders for simple linear and unitary groups }}

\author{\textsc{M.A. Grechkoseeva}}
\address{Novosibirsk State University, Pirogova, 1, Novosibirsk 630090, Russia;\newline \hspace*{3mm}  Sobolev Institute of Mathematics, Koptyuga 4, Novosibirsk 630090}
\email{grechkoseeva@gmail.com}

\author{\textsc{A.M. Staroletov}}
\address{Sobolev Institute of Mathematics, Koptyuga 4, Novosibirsk 630090}
\email{staroletov@math.nsc.ru}

\author{\textsc{A.V. Vasil'ev}}
\address{Novosibirsk State University, Pirogova, 1, Novosibirsk 630090, Russia;\newline \hspace*{3mm}  Sobolev Institute of Mathematics, Koptyuga 4, Novosibirsk 630090}
\email{vasand@math.nsc.ru}

\begin{abstract}

For a finite group $G$, let $\omega(G)$ be the set of element orders of $G$ and let $h(G)$ be the number of pairwise nonisomorphic finite groups $H$ with $\omega(H)=\omega(G)$. We say that the recognition problem is solved for $G$ if the number $h(G)$ is known, and if it is finite, then all finite groups $H$ with $\omega(H)=\omega(G)$ are described. We complete the solution of the recognition problem for the finite simple linear and unitary groups. 

{\bf Keywords:} simple classical group, element order, recognition by spectrum.
\end{abstract}


\begingroup
\let\MakeUppercase\relax 
\maketitle
\endgroup

\section{Introduction}

Like primes in number theory, simple groups lie in the core of finite group theory. Attempts to characterize them in the class of all finite groups in one way or another have been made many times and became especially persistent since the classification theorem was announced and the complete list of simple groups became available.

In the middle of the 1980s Wujie Shi \cite{84Shi, 86Shi} noticed that the smallest nonabelian simple groups $L_2(5)$ and $L_2(7)$\footnote{In the notation of simple classical groups we follow the one-letter notation from the {\em Atlas of Finite Groups}.} can be recognized among all finite groups by the sets of their element orders. Since then and to this day the question of recognizing simple groups by their element orders attracts a lot of attention, see the recent survey \cite{23Survey}.

To deal with the subject and simplify the notation we agree on the following terminology. Given a finite group $G$, we write $\omega(G)$ for the set of element orders of $G$ and refer to this set as the \textit{spectrum} of~$G$. Groups are said to be \textit{isospectral} if their spectra coincide. The number of pairwise nonisomorphic finite groups isospectral to a group $G$ is denoted by~$h(G)$. We say that the {\em recognition problem} is solved for $G$ if the number $h(G)$ is known, and if it is finite, then all the groups isospectral to $G$ are described.

Mazurov and Shi  \cite{12MazShi.t} proved that $h(G)=\infty$ if and only if $G$ is isospectral to a group with nontrivial normal abelian subgroup. In particular, $h(G)$ is finite only if $G$ has trivial solvable radical. Observe that nonabelian simple groups are most basic among the groups having such structure.

Returning to simple groups, it was discovered that the group $L_2(9)$ is not uniquely determined by spectrum, moreover, $h(L_2(9))=\infty$, see the proof in \cite{91BrShi}. This motivated Praeger and Shi  to conjecture in \cite{94PrShi} that $h(L)\in\{1,\infty\}$ for every finite simple group~$L$, see also Problem~12.84 in the {\em Kourovka Notebook} \cite{Kou}.

The conjecture turned out to be correct for the sporadic groups and alternating groups but false for groups of Lie type. The first example of a group $L$ with $h(L)=2$, namely $L=L_3(5)$, was found by Mazurov in \cite{94Maz.t}. He proved that $L$ is isospectral to $\Aut L\simeq L\rtimes\langle \tau\rangle$, where $\tau$ is a graph automorphism of $L$, and there are no other finite groups isospectral to~$L$. Later in \cite{04Zav}, Zavarnitsine showed that for every positive integer $r$ there is a simple $3$\nobreakdash-\hspace{0pt}dimensional linear group $L$ such that $h(L)=r$ and every group $G$ isospectral to $L$ is an almost simple group with socle $L$, that is, up to isomorphism
\begin{equation}\label{eq:almost}
L\simeq\Inn L\leq G\leq\Aut L.
\end{equation}
We call a nonabelian simple group $L$ {\em almost recognizable} if every finite group $G$ isospectral to $L$ satisfies \eqref{eq:almost}. Mazurov conjectured that, with some number of exceptions that can be explicitly described, nonabelian simple groups are almost recognizable.

Mazurov's conjecture was confirmed for sporadic groups: $h(J_2)=\infty$ and $h(L)=1$ otherwise \cite{98MazShi}; for alternating groups: $h(A_6)=h(A_{10})=\infty$ and $h(A_n)=1$ for $n\geq5$ and $n\not\in\{6,10\}$ \cite{13Gor.t}; and for exceptional groups of Lie type: $h({}^3D_4(2))=\infty$ and $L$ is almost recognizable otherwise \cite{14VasSt.t}. It was also confirmed for simple classical groups in the following sense: every classical group of dimension greater than $60$ is almost recognizable \cite{15Vas,15VasGr1}.

Thus, to obtain the complete solution of the recognition problem for simple groups, one needs to address the following two problems. The first one is to describe almost simple groups isospectral to their socles. The second one is to decide which simple classical groups of dimension at most $60$ are almost recognizable. The solution of the first problem was completed by Grechkoseeva in \cite{16Gr.t, 17Gre, 18Gr.t} with substantial use of the description of the spectra of these groups given earlier by Buturlakin in \cite{08But.t, 10But.t}.

The main goal of this article is to solve the second problem for the linear and unitary groups, thus completing for them the solution of the general recognition problem.

\begin{theorem}\label{t:main}
Suppose that $L$ is a finite nonabelian simple linear or unitary group.
If $L$ is one of the groups $L_2(9)$, $L_3(3)$, $U_4(2)$, $U_5(2)$, $U_3(5)$, and $U_3(q)$, where $q$ is a Mersenne prime such that $q^2-q+1$ is also a prime, then $h(L)=\infty$. Otherwise, $h(L)<\infty$, every finite group $G$ isospectral to $L$ is an almost simple group with socle isomorphic to~$L$ and all such $G$ are known.
\end{theorem}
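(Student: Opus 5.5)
The proof splits into the exceptional groups, where $h(L)=\infty$, and the remaining ones, where we reduce to almost simple groups.

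\textbf{The infinite cases.} By the Mazurov--Shi criterion \cite{12MazShi.t}, $h(L)=\infty$ as soon as $L$ is isospectral to a group with a nontrivial normal abelian subgroup. So for each listed group I exhibit such a group.
\begin{itemize}
\item For $L_2(9)\simeq A_6$ this is \cite{91BrShi}.
\item For $L_3(3)$, $U_4(2)$, $U_5(2)$ and $U_3(5)$, the known small-group results provide a split extension $V\rtimes H$ with the same spectrum. Here $V$ is an elementary abelian $p$-group and $H$ is a simple group or a subgroup of $L$, chosen so that every element of $H$ has a fixed point on $V$ or the orders coincide.
\item For $U_3(q)$ with $q$ a Mersenne prime and $q^2-q+1$ prime, I take $V\rtimes L_2(q)$ or $V\rtimes U_3(q)$-type constructions. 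I would build them from a module of the parabolic structure and verify the spectrum equality using Buturlakin's description of $\omega(U_3(q))$. The primality hypothesis on $q^2-q+1$ is what prevents new element orders from appearing.
\end{itemize}

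\textbf{The finite cases: reduction to almost simple groups.} By \cite{15Vas,15VasGr1}, groups of dimension $n>60$ are almost recognizable. Once almost recognizability is known, \cite{16Gr.t,17Gre,18Gr.t} determine exactly which almost simple groups $G$ with $\operatorname{Soc}(G)=L$ satisfy $\omega(G)=\omega(L)$. This yields $h(L)<\infty$ and a full description of the groups. It therefore remains to prove almost recognizability for $L=L_n(q)$ or $U_n(q)$ with $n\le 60$, outside the listed exceptions.

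I follow the standard scheme. Let $G$ be isospectral to $L$ and let $GK$ denote the prime graph.
\begin{enumerate}
\item Using the criteria for adjacency in $GK(L)$ (Vasil'ev--Vdovin), show that $t(G)\ge 3$ and $t(G,2)\ge 2$, where $t$ is the independence number.
\item The structure theorem then yields a unique nonabelian composition factor $S$ with $K\le G/K\le\Aut S$, where $K$ is the solvable radical.
\item Using the spectrum and the lists of possible $S$ (via Zsigmondy primes $r_i$ dividing $q^i-1$), show that $S$ is a group of Lie type in the same characteristic $p$ and of comparable dimension. Then show $S\simeq L$ by comparing the largest primitive prime divisors and the maximal element orders.
\item Prove $K=1$. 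Suppose $K\neq1$ and pass to a minimal normal $p'$- or $p$-section $V$ acting as a module for a subgroup of $S$. A Hall--Higman type argument, or the existence of a Frobenius subgroup $r_i\rtimes$ (a cyclic group of order $k$) in $L$, produces an element order $r\cdot m\notin\omega(L)$ whenever $V$ is coprime to $r$. For $V$ a $p$-group, use unisingular or fixed-point-free arguments with semisimple elements of orders $(q^{n}-1)/(q-1)$ or similar, showing that some product $p\cdot k$ lies outside $\omega(L)$.
\end{enumerate}

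\textbf{The main obstacle.} I expect the hard part to be the small-dimensional cases ($n\le 4$, and small $q$ more generally), in two places. First, steps 1 and 3 can fail there because the prime graph is too dense or Zsigmondy primes are missing. Second, the $K=1$ argument in step 4 is delicate in defining characteristic. The Mersenne condition for $U_3(q)$ shows that these cases genuinely have exceptions. For these I would do direct case analysis: when the Zsigmondy prime is absent, use the orders $q\pm1$ and $p\cdot(q+1)$ explicitly, and treat groups with $q\le 9$ by computation with known character tables.
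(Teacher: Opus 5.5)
\textbf{Gap: excluding a composition factor $S$ in a different characteristic.} Your handling of the exceptional groups, and your final step (almost recognizability plus \cite{17Gre} gives $h(L)<\infty$ and the list of $G$), follow the same logic as the paper. The paper simply quotes \cite[Theorem~2.1]{23Survey} for $n\le 4$ or $q$ even. You should quote these cases too, rather than rebuild them. Your $V\rtimes L_2(q)$ option for $U_3(q)$ cannot work: $q^2-q+1$ is coprime to $|L_2(q)|$ and is an isolated vertex of $GK(U_3(q))$. So $L_2(q)$ would have to act fixed-point-freely on the $(q^2-q+1)$-part of $V$, which is impossible because $L_2(q)$ is not a Frobenius complement. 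Cite \cite{06Zav.t} instead.

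The genuine gap is step~3 of your scheme for $5\le n\le 60$. Once $S\le G/K\le\Aut S$ is obtained, the whole difficulty is to show that $S$ cannot be a classical group over a field of order $u$ and characteristic $v\ne p$. For $t(L)\ge5$ and $p$ odd, \cite[Theorems~3.6 and~3.8]{23Survey} already show that $S$ is classical, and that $v=p$ forces $S\simeq L$ and $K=1$. So your steps 1, 2 and 4 are not where the work lies. You dispose of the cross-characteristic case with one phrase about Zsigmondy primes. Nothing you say rules out, for unrelated $q$ and $u$, that $k_9(\varepsilon q)$ divides some $k_j(\tau u)$, or that the relevant parts of $GK(L)$ and $GK(S)$ match. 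Excluding this is exactly the content of:
\begin{itemize}
\item \cite{15Vas}, for dimension at least $45$;
\item \cite{17Sta,21Sta.t,26Sta.t}, for $n\ge 11$;
\item \cite{12GrLyt.t,24GrPan.t,26Pan.t,26GreRod_arxiv}, for $5\le n\le 8$.
\end{itemize}
For $n\in\{9,10\}$ with $q$ odd (the groups with $t(L)=5$) no earlier result exists. That case is Theorem~\ref{t:l9l10}, the new input behind the statement. Your \emph{main obstacle} paragraph therefore looks in the wrong place: the small cases are settled, and the missing case is in the middle range.

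\textbf{What is needed to close it.} You must either cite these results dimension by dimension, as the paper does, or supply a real argument for $n=9,10$. The paper's argument runs as follows.
\begin{itemize}
\item By Lemma~\ref{l:reduction} and Lemma~\ref{l:rest on t(S)}, $S$ is classical in characteristic $v\ne p$ with $t(S)\in\{5,6\}$, which leaves a short explicit list of types.
\item The large primes $r_i(\varepsilon q)$, for $5\le i\le n$ and $i\ne6$, are coprime to $v\cdot|K|\cdot|\overline G/S|$ and remain large in $S$.
\item The primes $r_n(\varepsilon q)$ and $r_{n-1}(\varepsilon q)$ have disjoint neighbourhoods in $GK(S)$ by Lemma~\ref{l:disjoint}, so Lemma~\ref{l:unique} places them in specific sets $R_j(\pm u)$.
\item The sizes of $k_i(\varepsilon q)$ and $k_j(\tau u)$ are compared via Lemma~\ref{l:cyclotomic estimation}, against $\exp(S)\le\exp(L)$ from Lemma~\ref{l:exp}. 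This bounds $q$ and $u$.
\item The few remaining pairs, with $q\ge5$ and $u\ge4$ by Lemma~\ref{l:u>3}, are eliminated by explicit arithmetic.
\end{itemize}
None of this is present in your proposal, and the generic steps you list cannot replace it.
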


\begin{rem}\label{r:1} The only known Mersenne primes $q$ such that $q^2-q+1$ is also a prime are $3$ and~$7$, and the conjecture is that there are no other such primes. In 2006, Zavarnitsine verified this conjecture for the first 43 Mersenne primes (all known Mersenne primes at that moment), see \cite{06Zav.t}. When we were preparing this article, he kindly checked that the conjecture holds true for all the new 9 Mersenne primes numbers that have been discovered since then.
\end{rem}

\begin{rem} If $L$ is an almost recognizable simple group, then the precise value of $h(L)$ and the description of all groups $G$ isospectral to $L$ can be readily extracted from \cite[Tables~1--9]{23Survey}, in particular, from Table~1 for linear groups and Table~2 for unitary groups. One should proceed as follows: in the table corresponding to the type of $L$, find the section containing the condition of the form ``$n\geq n_0$'' and take $h(L)$ and the description of isospectral groups from this section. We decided not to repeat Tables 1 and 2 of \cite{23Survey} in the present article since they need to be supplemented by explanations of how to use them  from \cite[Section 2.3]{23Survey} and cumbersome notation for outer automorphisms from \cite[Section 2.2]{23Survey}. 
\end{rem}

In fact, Theorem~\ref{t:main} is a direct corollary of the bunch of known results and the following theorem.

\begin{theorem}\label{t:l9l10}
If $L$ is one of the simple groups $L_9(q)$, $L_{10}(q)$, $U_9(q)$, $U_{10}(q)$, where $q$ is odd, then every finite group isospectral to $L$ is an almost simple group with socle isomorphic to~$L$.
\end{theorem}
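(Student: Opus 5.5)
We will follow the standard three-step scheme for recognition of classical groups by spectrum, as in \cite{15Vas,15VasGr1}. Fix $L$ from the list, write $L=L_n^\varepsilon(q)$ with $n\in\{9,10\}$, $q$ odd, $\varepsilon=\pm$, and let $G$ be a finite group with $\omega(G)=\omega(L)$.

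\textbf{Step 1: $G$ has a unique nonabelian composition factor.} We use the Gruenberg--Kegel (prime) graph $GK(L)$. Its independence number $t(L)$ and $2$-independence number $t(2,L)$ come from Vasil'ev--Vdovin's adjacency criterion for classical groups. For $n=9,10$ we expect $t(L)\geq 3$, which is witnessed by the primitive prime divisors $r_i$ of $q^i-1$ (for $\varepsilon=-$, of $(\varepsilon q)^i-1$) with $i$ taken among $n$, $n-1$, and $n-2$ or so. Since $q$ is odd, we also get $t(2,L)\geq2$. The general structural theorem for groups isospectral to simple groups with $t\ge3$ and $t(2,\cdot)\ge2$ then gives that $G$ has a unique nonabelian composition factor $S$, with $S\le \overline G=G/K\le\Aut S$, where $K$ is the solvable radical.

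\textbf{Step 2: $S\simeq L$.} Here the difficulty comes from the small dimension. The general results identifying $S$ need $n$ large, because they rely on the fact that $\omega(S)\subseteq\omega(L)$ contains numbers like $r_nr_{n-1}$-type products and large $p$-parts. We will compare prime graphs and spectra. First, $S$ must contain elements of orders $r_n$ and $r_{n-1}$ and must preserve the nonadjacency of these primes, since $K$ cannot absorb the corresponding primes. By the known lists of simple groups with independent primes of given sizes (Zsigmondy considerations together with \cite{14VasSt.t} and sporadic/alternating checks), this forces $S$ to be a group of Lie type over a field of characteristic $p$ or of some other $v$. For each candidate type we then compare the maximal element orders, for instance $(q^n-1)/(q-1)$-type exponents against $q^{n-1}$-type ones, together with the $2$-exponent. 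The case where the characteristic of $S$ differs from $p$ is the expected main obstacle. We will handle it as in \cite{15Vas}: we use the fact that unipotent elements of order $p^{k}$ with $p^{k-1}< n\le p^k$ lie in $\omega(L)$, and we show that $S$ cannot have element orders matching the $p$-part $p^{\lceil \log_p n\rceil}$ together with large $r_i$. For the odd $q$ which are small relative to $n$ (that is, $q=3,5,7$), we expect to need separate arithmetic checks, since the $p$-exponent then interacts with the toral orders. In equal characteristic, comparing the largest primitive prime divisor and the rank pins down $S=L_m^{\pm}(q')$ with $m=n$ and $q'=q$.

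\textbf{Step 3: $K=1$.} Suppose $K\neq1$. Passing to a $G$-chief factor $V$ of $K$ that is an elementary abelian $s$-group, the group $\overline G$ acts on $V$. By the standard Hall--Higman/Frobenius arguments, an element of order $r_n$ (or $r_{n-1}$) from $L$ acting on $V$ yields $s\cdot r_n\in\omega(G)$ or forces a fixed point. We will then derive a contradiction with the adjacency criteria in $GK(L)$, using the known lemmas on the action of groups of Lie type on modules (the existence of fixed points of unipotent or semisimple elements on coprime and non-coprime modules). Finally $G/K=\overline G$ satisfies $L\le G\le\Aut L$, which proves the theorem.
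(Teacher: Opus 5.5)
\paragraph{Where the gap is.} Your Steps 1 and 3, and the equal-characteristic half of Step 2, are not where the work lies. They are already known (\cite{05Vas.t}, \cite{20YanGrVas}, \cite[Theorems 3.6 and 3.8]{23Survey}); it is the last of these, not \cite{14VasSt.t}, that shows $S$ is classical. The paper simply packages them as Lemma~\ref{l:reduction}: if $G$ is not almost simple with socle $L$, then $S$ is a classical group in a characteristic $v\neq p$. So the entire content of the theorem is excluding such a cross-characteristic $S$, and this is where your proposal has a genuine gap.

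Handling this case ``as in \cite{15Vas}'' is not available. That argument needs $t(L)\geq 23$ (dimension at least $45$), whereas here $t(L)=5$, which is precisely why $n=9,10$ were open. The substitute you sketch, matching the $p$-exponent $p^k$ ($p^{k-1}<n\le p^k$) against element orders of $S$, gives no contradiction as stated. The prime $p$ may divide $|K|$ (Lemma~\ref{l:r in K}), so the unipotent orders of $L$ need not be realized in $S$. Nothing in your sketch explains why semisimple $p$-elements of a group in characteristic $v\neq p$ should clash with $p^k$ and the large $r_i$. Moreover, you never bound the Lie rank of $S$ or relate $u$ to $q$, so ``for each candidate type compare maximal element orders'' is not a finite procedure.

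\paragraph{What the paper uses instead.}
\begin{enumerate}
\item The nontrivial bound $0\le t(S)-t(L)\le 1$ of \cite[Theorem~3]{26Sta.t} (Lemma~\ref{l:rest on t(S)}). This confines $S$ to the explicit list of classical groups with $t(S)\in\{5,6\}$, all of Lie rank at most $11$.
\item For $5\le i\le n$, $i\neq 6$, the primes $r_i(\varepsilon q)$ are coprime to $v|K||\overline G/S|$ (Lemma~\ref{l:t(S)>=t(L)}). Hence $k_i(\varepsilon q)\in\omega(S)$ and $t(r_i(\varepsilon q),S)\ge 5$.
\item The primes $r_n(\varepsilon q)$ and $r_{n-1}(\varepsilon q)$ have disjoint neighbourhoods in $GK(L)$ (Lemma~\ref{l:disjoint}), hence in $GK(S)$. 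Lemma~\ref{l:unique} then places them in specific sets $R_j(\tau u)$; for $S=L_m^\tau(u)$ these are essentially $R_m(\tau u)$ and $R_{m-1}(\tau u)$, with the cases $u-\tau\mid m$ treated separately. This yields divisibilities $k_n(\varepsilon q)\mid k_j(\tau u)$ or $k_{n-1}(\varepsilon q)\mid k_j(\tau u)$ for explicit $j$.
\item The cyclotomic estimates of Lemma~\ref{l:cyclotomic estimation}, applied to these divisibilities, are played against $\exp(S)\le\exp(L)$ and the bounds of Lemma~\ref{l:exp}. This forces $q$ and $u$ to be very small. The leftover cases, including $q=3$ and $u\le 3$ (Lemma~\ref{l:u>3}), are eliminated by direct checks on primitive prime divisors.
\end{enumerate}
Without the rank bound and this localization of the disjoint-neighbourhood primes, your Step 2 has no mechanism that produces a contradiction.
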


The formal proof of Theorem~\ref{t:main} will be done in Section~\ref{s:t12}. Here we outline why this general theorem follows from quite particular Theorem~\ref{t:l9l10}. The recognition problem for the groups $L_2(q)$ has been solved in 1994, the last step was done by Brandl and Shi in \cite{94BrShi}. The recognition of $L_3(q)$ and $U_3(q)$ was completed by Zavarnitsine in \cite{04Zav1.t} and \cite{06Zav.t}, respectively. The case of linear and unitary groups in characteristic $2$ was completed by Grechkoseeva and Vasil'ev \cite{08VasGr.t}, and Grechkoseeva and Shi \cite{13GrShi.t}, respectively.

To explain the case of odd characteristic, we need some terminology to be presented. For a finite group $G$, let $\pi(G)$ be the set of prime divisors of the order of $G$ and denote by $t(G)$ the greatest size of a subset $\rho$ of $\pi(G)$ such that $pq\not\in\omega(G)$ for any two distinct primes $p,q\in\rho$. 
Note that the dimension of a simple classical group $L$ and $t(L)$ are linearly equivalent, see Table~\ref{tab:tL} in Section~\ref{s:pre}. 

In 2015, Vasil'ev proved that for every simple classical group $L$ with $t(L)\geq23$, a group $G$ isospectral to $L$ cannot have a composition factor isomorphic to a simple group of Lie type whose defining characteristic distinct from that of~$L$ \cite{15Vas}. Modulo the previous thorough analysis of a possible structure of a finite group isospectral to a simple classical group, see details in \cite[Section~2]{15Vas} or \cite[Section~3]{23Survey}, Vasil'ev's result suffices to prove the almost recognizability of simple linear and unitary groups of dimension at least~$45$ (equivalently, with $t(L)\geq23$) and, with some additional efforts made by Vasil'ev and Grechkoseeva in~\cite{15VasGr1}, to do the same for symplectic and orthogonal groups of dimension greater than~$60$ (this implies $t(L)\geq23$).

Thus, the recognition problem was reduced to the case where $L$ is a simple classical group with $t(L)\leq22$. The analysis of the remaining groups turned out to be quite technically complicated and took more than ten years. The research was naturally divided in two directions. On the one hand, in the series of papers \cite{17Sta,21Sta.t,26Sta.t}, Staroletov was able to significantly lower the upper bound on $t(L)$, solving the recognition problem for the simple linear and unitary groups with $t(L)\geq6$ and symplectic and orthogonal groups with $t(L)\geq14$. On the other hand, Grechkoseeva and her colleagues completed the solution of recognition problem for the simple classical groups with $t(L)\leq4$, see \cite{19GrVasZv,20GrZv.t,25Gr.t,24GrPan.t,26Pan.t,26GreRod_arxiv}. As readily seen from the above, to recognize by spectrum all the simple linear and unitary groups, it suffices to handle the case $t(L)=5$. The groups with this property are exactly the groups $L_9(q)$, $L_{10}(q)$, $U_9(q)$, and $U_{10}(q)$ from Theorem~\ref{t:l9l10}.

It is quite natural to consider all simple classical groups with $t(L)=5$ altogether. Thus, we also prove the following theorem covering the case $t(L)=5$ for symplectic and orthogonal groups.

\begin{theorem}\label{t:main2}
If $L$ is one of the simple groups $S_{10}(q)$, $S_{12}(q)$, $O_{11}(q)$, $O_{13}(q)$, $O_{12}^-(q)$, where $q$ is odd, then every finite group isospectral to $L$ is an almost simple group with socle isomorphic to~$L$.
\end{theorem}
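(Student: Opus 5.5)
The plan follows the standard scheme for recognition by spectrum, run in parallel with the proof of Theorem~\ref{t:l9l10}. Let $L$ be one of $S_{10}(q)$, $S_{12}(q)$, $O_{11}(q)$, $O_{13}(q)$, $O_{12}^-(q)$ with $q=p^m$ odd, and let $G$ be a finite group with $\omega(G)=\omega(L)$.

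\textbf{Step 1: reduction to a single nonabelian composition factor.} Using Table~\ref{tab:tL}, $t(L)=5$, and the prime graph of $L$ contains a coclique of size at least $3$ through $2$. Hence the general structure theorem for groups isospectral to simple classical groups applies (see \cite[Section~3]{23Survey} or \cite[Section~2]{15Vas}). It gives a unique nonabelian composition factor $S$ with
\[
K\le \overline G=G/K\le \Aut S,\qquad K \text{ the solvable radical of } G,
\]
and $S$ contains elements of orders $r_i$ for all primitive prime divisors $r_i$ of $q^i-1$, $i$ ranging over the indices with $t(L)$-large behaviour. Concretely, these are $i\in\{10,5\}$ or $\{12,6,5\}$ etc., according to the type: the ppds $r_{2n}$ of $q^{2n}-1$ for $S_{2n}$ and $O_{2n+1}$, and $r_{12}$, $r_{10}$, $r_5$ for $O_{12}^-$.

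\textbf{Step 2: $S$ is not sporadic, alternating, or of Lie type in characteristic $\neq p$.} This is the main obstacle, since the uniform argument of \cite{15Vas} needs $t(L)\ge 23$. I would proceed as follows.
\begin{itemize}
\item \emph{Sporadic and alternating $S$.} Bound $t(S)$ and the number of large primes: $S$ must contain a coclique of $5$ primes, all dividing $|L|$ and with orders at least $11$ or so. An alternating $A_n$ would then have $n$ large, forcing elements of orders that are products of such primes; these orders are absent from $\omega(L)$. This excludes the alternating groups, and the sporadic groups are excluded by a finite check.
\item \emph{Lie type $S$ over a field of characteristic $v\ne p$.} Since $t(S)\ge 4$ or so, the Lie rank of $S$ is bounded. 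I would compare orders of maximal tori: $\omega(S)$ contains $(v^{k}-1)$-type numbers, which must lie in $\omega(L)$, whose maximal elements are explicitly known from Buturlakin's description. I would use the $2$-part: the exponent of a Sylow $2$-subgroup of $L$ together with the prime $p$ being adjacent only to few primes. Specifically, the prime $p$ is nonadjacent to $r_{2n}$ in $L$, so $p$ is in a coclique with the large ppds of $S$. Counting such cocliques in $S$ via the criteria of Vasil'ev--Vdovin for adjacency should bound $v^{\text{rank}}$ in terms of $q$. Then the orders $r_{2n}$ and $r_{2n-2}$ type primes, together with their product constraints, force a contradiction by Zsigmondy-type arithmetic.
\end{itemize}
I expect a residual list of small candidates, for instance $S$ of rank $\le 4$ over small fields, and I would treat these by direct spectrum comparison.

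\textbf{Step 3: $S$ has characteristic $p$, so $S\simeq L$ and $K=1$.} Once $S$ is of Lie type over a field of characteristic $p$, the known results on the prime graph and on the exponent of $p$-parts identify $S$. Comparing $t$, the ppd indices and the maximal $p$-power in $\omega$, we get $S\simeq L$; the pair $S_{2n}(q)$, $O_{2n+1}(q)$ must be distinguished by their spectra, which is known to be possible. Finally, $K=1$ follows from the standard lemma that a nontrivial normal $r$-subgroup, acted on by $S$ via a Frobenius-type argument with an element of order $r_{2n}$, produces the element order $r\cdot r_{2n}\notin\omega(L)$. For $r=p$, a unipotent element of maximal order times a semisimple element gives an order outside $\omega(L)$. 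This yields the almost simple structure claimed in Theorem~\ref{t:main2}.
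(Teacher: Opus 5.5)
\textbf{Minor points.} Your Steps~1 and~3 are, up to citations, what the paper packages into Lemma~\ref{l:reduction}. By \cite{05Vas.t} and \cite[Theorems 3.6, 3.8]{23Survey}, $G$ has a unique nonabelian composition factor $S$; alternating, sporadic and exceptional $S$ are already excluded there; and $S$ of characteristic $p$ forces $S\simeq L$ and $K=1$. So the first bullet of your Step~2 is unnecessary, and as written it is only heuristic. The hypothesis actually needed for \cite{05Vas.t} is $t(L)\ge3$ and $t(2,L)\ge2$. For these $L$ one has $t(2,L)=2$, so your ``coclique of size at least $3$ through $2$'' does not exist. Also, $K\le\overline G$ should read $S\le\overline G$.

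\textbf{The genuine gap} is the remaining case, $S$ classical in characteristic $v\ne p$. This is the entire content of the paper's proof, and your second bullet gives no argument that could close it.

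First, a lower bound such as $t(S)\ge4$ does not bound the rank of $S$; you need an upper bound. The paper imports it from \cite[Theorem~3]{26Sta.t} (Lemma~\ref{l:rest on t(S)}). Combined with $t(S)\ge t(L)$, this gives $t(S)\in\{5,6,7\}$ and a finite list of types with $\prk(S)\le14$ (Table~\ref{tab:5tL8}). The lower bound comes from the fact that large primes of $L$ avoid $v$, $K$ and $\overline G/S$ and remain large in $S$ (Lemma~\ref{l:t(S)>=t(L)}). In particular, your anticipated ``residual candidates of rank $\le4$'' never arise.

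Second, the prime $p$ does not help. It may divide $|K|$ (Lemma~\ref{l:r in K}), and in $S$ it is an ordinary cross-characteristic prime, so cocliques through it give no control on $v$ or $u$.

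The mechanism that works is the disjoint-neighbourhood pair of Lemma~\ref{l:disjoint}: $R_{10}(q),R_5(q)$ for $n=5$, and $R_{12}(q),R_5(\pm q)$ for $n=6$.
\begin{itemize}
\item Disjointness persists in $GK(S)$, so Lemma~\ref{l:unique} pins these primes into specific sets such as $R_m(\tau u)$, $R_{m-1}(\tau u)$, $R_{2m}(u)$ or $R_{12}(u)$.
\item This yields divisibilities $k_i(q)\mid k_j(u)$. Via Lemma~\ref{l:disjoint}(ii), it also yields divisibilities of maximal element orders, such as $(u^6+1)/(2,u-1)\mid (q^5-\epsilon)/2$.
\item These are played against the exponent bounds of Lemma~\ref{l:exp}, the order bound of Lemma~\ref{l:orders}, and, for $t(S)=7$, the estimate $u^3<q$ from Lemma~\ref{l:d-estim}. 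Together they force $q$ and $u$ to be small.
\item The leftover cases are killed by explicit ppd computations, done separately for $t(S)=7,6,5$ (Lemmas~\ref{l:57}, \ref{l:56}, \ref{l:55}).
\end{itemize}
Without the rank bound and this localization of the primes of $L$ inside $S$, ``Zsigmondy-type arithmetic'' has nothing to act on. Step~2 as written is a statement of intent rather than a proof.
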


As it follows from the above analysis, to solve the recognition problem for all finite simple groups, it remains to handle the symplectic and orthogonal groups with $6\leq t(L)\leq13$, which makes the complete solution apparently achievable within a range of the one last paper.

\section{Preliminaries}\label{s:pre}

In this section we provide necessary number-theoretical results, as well as results concerning prime graphs and element orders of simple classical groups.

If $a$ and $b$ are integers, then $(a,b)$ denotes the greatest common divisor of $a$, while $[a,b]$ denotes their least common multiple. 
If $r$ is a prime, then $(a)_r$ stands for the $r$-part of $a$,
that is, the highest power of $r$ dividing $a$. The number $a/{(a)_r}$ is called the $r'$-part of $a$ and denoted by $(a)_{r'}$.

Fix an integer $a$ with $|a|>1$. If $r$ is an odd prime and $(a,r)=1$, then $e(r,a)$ denotes the
multiplicative order of $a$ modulo $r$. Define $e(2,a)$ to be $1$ if $4$ divides $a-1$ and to be $2$
if $4$ divides $a+1$. A prime $r$ is said to be a {\it primitive prime divisor} of
$a^i-1$ if $e(r,a)=i$. We write $r_i(a)$ to denote some primitive prime divisor of $a^i-1$, if such
a prime exists, and $R_i(a)$ to denote the set of all such divisors. The existence of primitive divisors for almost all pairs $(a,i)$ was proved by Bang~\cite{86Bang} and Zsigmondy~\cite{Zs}

\begin{lemma}[Bang--Zsigmondy]\label{l:zsigmondy}
Let $a$ be an integer and $|a|>1$. 
If $i$ is a positive integer and
$(a,i)\notin\{(2,1),(2,6),(-2,2),(-2,3),(3,1),(-3,2)\}$, then 
the set $R_i(a)$ is not empty.
\end{lemma}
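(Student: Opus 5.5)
The plan is the classical cyclotomic argument. First we reduce to $a>1$. For an odd prime $r\nmid a$, the order of $-a$ modulo $r$ is determined by the order of $a$: it equals $2k$ if $k=e(r,a)$ is odd, equals $k/2$ if $k\equiv 2\pmod 4$, and equals $k$ if $4\mid k$. Hence $R_i(-a)\setminus\{2\}=R_{i'}(a)\setminus\{2\}$, where $i'=2i$ for odd $i$, $i'=i/2$ for $i\equiv 2\pmod 4$, and $i'=i$ otherwise.

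The prime $2$ is governed by the ad hoc definition of $e(2,a)$. It lies in $R_1(a)$ or $R_2(a)$ according to $a\bmod 4$, so for odd $a$ the indices $i\le 2$ are settled directly. For even $a$ the indices $i=1,2$ reduce to showing that $a-1$ and $a+1$ have odd prime divisors. This fails exactly for $a=2$ with $i=1$, and for $a=-2$ with $i=2$ (where $a-1=-3$ but $a+1=-1$). So we may assume $a\ge 2$ and $i\ge 3$ when looking for odd primitive divisors, with the excluded pairs for negative $a$ coming from the translation $i\mapsto i'$.

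Next, let $\Phi_i$ be the $i$-th cyclotomic polynomial, so that $a^i-1=\prod_{d\mid i}\Phi_d(a)$. Every $r\in R_i(a)$ divides $\Phi_i(a)$. The key lemma is the converse up to a controlled exception. Suppose $r$ is a prime with $r\mid\Phi_i(a)$ and $j=e(r,a)<i$. Then $i=jr^k$ with $k\ge1$, so $r$ is the largest prime divisor of $i$ unless $r\le 2$. Moreover, for $i\ge 3$ we have $(\Phi_i(a))_r=r$.

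To prove the lemma, work in $\mathbb{Z}/r$: both $a^i-1$ and $a^j-1$ vanish there, and the multiplicity of $a$ as a root of $x^i-1$ over $\mathbb{F}_r$ forces $r\mid i$ and $i/j$ to be a power of $r$. Then use the lifting-the-exponent estimate $(a^{jr^k}-1)_r=(a^j-1)_r\,r^k$ for odd $r$, which shows that only one factor of $r$ is left for $\Phi_i(a)$. The case $r=2$ with $i\ge3$ is handled similarly, and there $2\nmid\Phi_i(a)$ unless $i$ is a power of $2$, where $\Phi_{2^k}(a)=a^{2^{k-1}}+1$ is exactly divisible by $2$.

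Consequently, if $R_i(a)=\varnothing$ with $i\ge 3$, then $\Phi_i(a)$ divides $p$, where $p$ is the largest prime divisor of $i$. The main step is the lower bound
\[
\Phi_i(a)=\prod_{\zeta}|a-\zeta|>(a-1)^{\varphi(i)},
\]
refined when $a=2$ by pairing conjugate roots to get $\Phi_i(2)\ge 2^{\varphi(i)/2}$, or better. Since $p-1\mid\varphi(i)$, the inequality $\Phi_i(a)\le p$ fails whenever $a\ge 3$, except for tiny cases such as $a=3,\,i=2$, which is not relevant here. For $a=2$ it fails except for a short finite list, and checking that list by hand leaves only $(2,6)$, where $\Phi_6(2)=3$.

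Finally we translate back through $i\mapsto i'$ to obtain the negative exceptions:
\begin{itemize}
\item $(-2,3)$ corresponds to $(2,6)$;
\item $(-2,2)$ arises because $(-2)^2-1=3=(-2)-(-2)^{\,\,}1$, i.e.\ $3\in R_1(-2)$ and $2\nmid a$ fails, so no prime is left for $i=2$;
\item $(-3,2)$ arises because $(-3)^2-1=8$ is a $2$-power and $e(2,-3)=1$.
\end{itemize}
The exception $(3,1)$ comes from $a-1=2$ with $e(2,3)=2$.

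The main obstacle is bookkeeping rather than depth. The nonstandard convention for $e(2,a)$ and the sign change $a\mapsto -a$ interact in the small indices $i\le 2$ and $i=3,6$, so these must be checked by hand to get exactly the six listed exceptions. The analytic estimate itself is routine.
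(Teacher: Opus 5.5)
The paper does not prove this lemma; it simply quotes it from Bang and Zsigmondy. Your outline is the classical cyclotomic proof, and most of it is sound:
\begin{itemize}
\item for odd $r$ one has $e(r,-a)=\nu(e(r,a))$ with $\nu$ an involution, and $\nu(i)\ge 3$ whenever $i\ge 3$;
\item the indices $i\le 2$ are a direct check, giving exactly $(2,1),(3,1),(-2,2),(-3,2)$;
\item for $i\ge 3$, a non-primitive prime divisor of $\Phi_i(a)$ is the largest prime $p$ of $i$ and divides $\Phi_i(a)$ exactly once;
\item for $a\ge 3$ the chain $\Phi_i(a)>(a-1)^{\varphi(i)}\ge 2^{\varphi(i)}\ge \varphi(i)+1\ge p$ finishes the argument.
\end{itemize}
The displayed identity in your $(-2,2)$ bullet is garbled. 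The actual point, $a-1=-3$ and $a+1=-1$, was already made correctly earlier in your proposal.

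The genuine gap is the case $a=2$. This is exactly where $(a-1)^{\varphi(i)}=1$ is useless and where the exception $(2,6)$ lives. Pairing conjugate roots does not give $\Phi_i(2)\ge 2^{\varphi(i)/2}$. For a root $\zeta$ of argument $\theta$, the pair contributes $|2-\zeta|^2=5-4\cos\theta$, which is below $2$ whenever $\cos\theta>3/4$. This already happens for the roots of argument $\pm 2\pi/i$ once $i\ge 9$. So your reduction to a short finite list is not justified as written.

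The inequality you want is true, but it needs a global argument. One way is the M\"obius formula ($\mu$ the M\"obius function):
\[
\Phi_i(2)=\prod_{d\mid i}(2^d-1)^{\mu(i/d)}=2^{\varphi(i)}\prod_{d\mid i}(1-2^{-d})^{\mu(i/d)}\ge 2^{\varphi(i)}\prod_{d\ge 1}(1-2^{-d})>2^{\varphi(i)-2}.
\]
This holds because the factors with $\mu(i/d)=-1$ exceed $1$, the remaining ones lie in $(0,1]$, and $\prod_{d\ge1}(1-2^{-d})\approx 0.289$.

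Combined with $\Phi_i(2)\le p\le \varphi(i)+1$, this forces $\varphi(i)\le 4$, so $i\in\{3,4,5,6,8,10,12\}$. The values $\Phi_i(2)=7,5,31,3,17,11,13$ then leave only $i=6$. With this repair your proof is complete.
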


For $i\geq3$, we denote the product of all primitive prime divisors of $a^i-1$ taken with multiplicities by $k_i(a)$. 
By definition, we have $(k_i(a),k_j(a))=1$ if $i\neq j$.
It follows from \cite{97Roi} that 
\begin{equation}\label{eq:ki}
k_i(a)=\frac{\Phi_i(a)}{(r,\Phi_{(i)_{r'}}(a))},
\end{equation}
where $\Phi_i(x)$ is the $i$th cyclotomic polynomial and $r$ is the largest prime dividing $i$;
moreover, if $(i)_{r'}$ does not divide $r-1$ then $(r,\Phi_{(i)_{r'}}(a))=1$. Recall that $\deg\Phi_n(x)=\varphi(n)$, where $\varphi(x)$ is the Euler totient function.  The properties of cyclotomic polynomials and  \eqref{eq:ki} imply that $k_i(-a)=k_{2i}(a)$ if $i$ is odd and $k_i(-a)=k_{2i}(a)$ if $i$ is a multiple of $4$.

\begin{lemma}\label{l:cyclotomic estimation} Let $i\geq 3$  and $a\in \mathbb Z$ with $b=|a|\geq 2$.
\begin{enumerate}
  \item Let $b\geq k$. If $i$ is a prime, then $\Phi_i(a)<\frac{k}{k-1}\cdot b^{\varphi(i)}$. If $(i)_{2'}$ is a prime power, then $\Phi_i(a)>\frac{k}{k+1}\cdot b^{\varphi(i)}$. 
  \item If $i$ has two odd prime divisors, then $\Phi_i(a)>b^{\varphi(i)}/2$.
\item If $i<105$, then $b^{\varphi(i)}/2<\Phi_i(a)<2b^{\varphi(i)}$.
  
\end{enumerate}
 \end{lemma}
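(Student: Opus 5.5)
The plan is to work from the product formula $\Phi_i(x)=\prod_{d\mid i}(x^d-1)^{\mu(i/d)}$, or equivalently from the factorization of $\Phi_i$ over the primitive $i$th roots of unity. Throughout, I compare $|\Phi_i(a)|$ with $b^{\varphi(i)}$. Since $i\geq 3$, $\Phi_i$ has even degree and no real roots, so $\Phi_i(a)>0$ and $\Phi_i(a)=\Phi_i(-b)$ or $\Phi_i(b)$ as appropriate. Two identities reduce negative arguments to positive ones: $\Phi_i(-x)=\Phi_{2i}(x)$ for odd $i$, and $\Phi_i(-x)=\Phi_{i/2}(x^{2})/\Phi_i(x)$ (up to the standard relations) when $i\equiv 2\pmod 4$. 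Alternatively, I would simply bound $\prod_{d\mid i}|a^d-1|^{\mu(i/d)}$ directly, since $b^d-1\leq |a^d-1|\leq b^d+1$.

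\textbf{Item (1).} If $i=p$ is prime, then $\Phi_p(a)=(a^p-1)/(a-1)$.
\begin{itemize}
\item For $a=b>0$ this equals $(b^p-1)/(b-1)<b^p/(b-1)=\frac{b}{b-1}\,b^{p-1}\leq \frac{k}{k-1}\,b^{p-1}$, because $b/(b-1)$ decreases in $b$ and $b\geq k$.
\item For $a=-b$ it equals $(b^p+1)/(b+1)<b^{p-1}$.
\end{itemize}
For the lower bound, write $i=2^s p^t$. By the Möbius formula, $\Phi_i(x)=\Phi_{p^t}(x^{2^{s-1}})$-type expressions reduce to $\Phi_{p^t}(y)=\Phi_p(y^{p^{t-1}})$ or $\Phi_{2p^t}(y)=\Phi_p(-y^{p^{t-1}})$, with $y=\pm b^{2^{s-1}}$ or similar. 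Hence $\Phi_i(a)=\Phi_p(c)$ or $\Phi_{2^s}(c)$ with $|c|=b^m$ and $\varphi(i)=m\varphi(\cdot)$. It then suffices to check $(c^p-1)/(c-1)>\frac{k}{k+1}|c|^{p-1}$ for $|c|\geq k$. For $c>0$ this is immediate. For $c=-B$, we have $(B^p+1)/(B+1)>B^{p}/(B+1)=\frac{B}{B+1}B^{p-1}\geq\frac{k}{k+1}B^{p-1}$. The $2$-power case is $\Phi_{2^s}(c)=c^{2^{s-1}}+1>|c|^{\varphi}$.

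\textbf{Items (2) and (3).} Write $\Phi_i(a)=\prod_{d\mid i}(a^d-1)^{\mu(i/d)}$. Then
\[
\frac{\Phi_i(a)}{b^{\varphi(i)}}=\prod_{d\mid i}\bigl(1-\epsilon_d b^{-d}\bigr)^{\mu(i/d)},\qquad \epsilon_d=\pm1.
\]
Each factor lies in $[1-b^{-d},1+b^{-d}]$, so taking logarithms gives
\[
\bigl|\log(\Phi_i(a)/b^{\varphi(i)})\bigr|\leq \sum_{d\mid i,\ \mu(i/d)\neq0} -\log(1-b^{-d})\leq \sum_{d\geq d_0}\frac{b^{-d}}{1-b^{-d}}.
\]
Only squarefree cofactors matter, so the divisors used are $d=i/e$ with $e$ squarefree.

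For (2), I would sharpen this by using signs and the fact that the smallest such $d$ is $i/\mathrm{rad}(i)$. When $i$ has two odd prime divisors, the terms with $\mu=-1$ (the denominators) are few and large-indexed. The worst case is $d=i/\mathrm{rad}(i)$, and pairing terms shows that the product stays above $1/2$.

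For (3), the Möbius product for $i<105$ involves at most $8$ terms, because $i<105$ has at most two odd primes with $\mu$ nonzero in a limited range. A crude bound $\sum_{d\geq1}2^{-d}/(1-2^{-d})$ is about $1.6$, which is too big. So I would instead exploit that for $i<105$ at most one factor has $d=1$, and handle small $d$ carefully, or appeal to (1) and (2) where they apply. What remains is $i$ with a single odd prime, or a $2$-power times it, which (1) covers with $k=2$: there $\frac{2}{3}>\frac12$ and the upper bound is $2$. It also leaves the case of two odd primes with $i<105$, which forces $i\in\{15,30,45,60,75,90,21,42,63,84,35,70\}$, and so on. For those, I would use (2) for the lower bound and verify the upper bound by the logarithmic estimate, since the minimal $d$ is at least $3$ after pairing.

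The main obstacle is the fine sign and pairing bookkeeping in (2) and the upper bound of (3). The crude logarithmic estimate fails at $b=2$, so the critical small divisors must be handled exactly; I would settle the finitely many residual small $i$ at $b=2,3$ by direct computation and use monotonicity in $b$ for the rest.
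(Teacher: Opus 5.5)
Your item (1) is correct and is essentially the paper's argument: reduce to $\Phi_p(c)$ with $|c|=b^m\geq k$, then compare $(c^p-1)/(c-1)$ with $|c|^{p-1}$ for both signs of $c$.

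The genuine gap is item (2), whose entire content is the inequality $\Phi_i(a)>b^{\varphi(i)}/2$. The sentence ``pairing terms shows that the product stays above $1/2$'' names neither the pairing nor the reason the remaining factors multiply to at least $1$. There is no slack here. For $a=2$ and $i=rs$ the ratio $\Phi_{rs}(2)/2^{\varphi(rs)}=\frac12\cdot\frac{1-2^{-rs}}{(1-2^{-r})(1-2^{-s})}$ tends to $1/2$ as $r,s\to\infty$, so the crude logarithmic bound cannot work. Since these $i$ form an infinite family, your fallback of settling residual cases by computation is not available either. Your heuristic that the factors with $\mu=-1$ are ``few and large-indexed'' also fails for the full Möbius product of an even index. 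For $i=2rs$ there are eight factors, and four of them have exponent $-1$, including $a-1$ itself since $\mu(2rs)=-1$. For $a<0$ the signs $\epsilon_d$ also alternate with the parity of $d$.

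The missing step is short once you reduce the index first. From $\Phi_{r^ts^u}(x)=\Phi_{rs}(x^{i/(rs)})$ and $\Phi_{2^kr^ts^u}(x)=\Phi_{rs}(-x^{i/(2rs)})$ for $k\geq1$, you get $\Phi_i(a)=\Phi_{rs}(c)$ with $|c|=B\geq 2$ and $b^{\varphi(i)}=B^{(r-1)(s-1)}$. In $\Phi_{rs}(c)=\frac{(c^{rs}-1)(c-1)}{(c^r-1)(c^s-1)}$ all four exponents are odd, so all signs agree.

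For $c>0$ the ratio $\frac{(1-B^{-1})(1-B^{-rs})}{(1-B^{-r})(1-B^{-s})}$ lies strictly between $1-B^{-1}\geq\frac12$ and $1$. For $c<0$ the ratio $\frac{(1+B^{-1})(1+B^{-rs})}{(1+B^{-r})(1+B^{-s})}$ lies strictly between $1$ and $\frac32(1+B^{-rs})<2$. Both use $B^{-r}+B^{-s}+B^{-r-s}<B^{-1}$ for distinct odd primes $r,s$.

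This proves (2) by a route different from the paper's. The paper instead expands the numerator of $2\Phi_{rs}(x)-x^{(r-1)(s-1)}$ and checks its sign for $a\geq2$ and for $a<0$.

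The same computation also gives the upper bound in (3) for $i$ with two odd prime divisors, which you likewise left as a sketch. Your phrase ``the minimal $d$ is at least $3$ after pairing'' is inaccurate: the factor with $d=i/\operatorname{rad}(i)$ is present and can by itself contribute a factor $2$, for example $(a-1)^{-1}$ when $i=2rs$ and $a=2$.

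Two smaller points on (3). First, (1) states an upper bound only for prime $i$, so for $i=r^t$ or $2^sr^t$ you must rerun its proof for $\Phi_p(c)$. Second, for $b\geq3$ your crude estimate already suffices for every $i$, since $\prod_{d\geq1}(1-3^{-d})>0.56$. For the upper bound in (3) the paper simply uses that $\Phi_i$ has coefficients in $\{-1,0,1\}$ when $i<105$. Hence $\Phi_i(a)\leq(b^{k+1}-1)/(b-1)<2b^{k}$.
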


\begin{proof}  (i) If $i=r$ is a prime, then $\Phi_i(x)=(x^r-1)/(x-1)$ and so $$\frac{b^r}{b+1}<\frac{b^r+1}{b+1}\leq\Phi_i(a)\leq \frac{b^r-1}{b-1}<\frac{b^r}{b-1}=\frac{b}{b-1}b^{\varphi(i)}\leq \frac{k}{k-1} b^{\varphi(i)}.$$ If $i=r^t$, then  $\Phi_i(x)=\Phi_r(x^{i/r})$ and therefore  $$\Phi_i(a)=\Phi_r(a^{i/r})>\frac{b^i}{b^{i/r}+1}=\frac{b^{i/r}}{b^{i/r}+1}b^{\varphi(i)}\geq \frac{k}{k+1}b^{\varphi(i)}.$$ 
If $i=2^sr^t$, then  $\Phi_i(x)=\Phi_{r}(-x^{i/2r})$ and we proceed as above.

(ii) If $i=r^ts^u$, then $\Phi_{i}(x)=\Phi_{rs}(x^{i/rs})$, so we may assume that $i=rs$. Then $\Phi_{rs}(x)=\Phi_r(x^s)/\Phi_r(x)$ and 
$$2\Phi_{rs}(x)-x^{(r-1)(s-1)}=\frac{x^{sr+1}-2x^{rs}+x^{sr-r+1}+x^{sr-s+1}-2x-x^{rs-r-s+1}+2}{(x^r-1)(x^s-1)}.$$
It is clear that $2\Phi_{rs}(a)-a^{(r-1)(s-1)}>0$ if $a\geq 2$. If $a<0$, then the numerator in $2\Phi_{rs}(a)-a^{(r-1)(s-1)}$ is equal to $b^{sr+1}+2b^{rs}-b^{sr-r+1}-b^{sr-s+1}+2b-b^{rs-r-s+1}+2$ and so positive too.

(iii) Let $k=\varphi(i)$. Since $i<105$, the coefficients of $\Phi_i(x)$ are in $\{-1,0,1\}$. This implies that $$\Phi_i(a)\leq \frac{b^{k+1}-1}{b-1}<\frac{b^{k+1}}{b-1}\leq 2b^{k}.$$
If $i$ has at most two prime divisors, then we apply (i) or (ii) to get the lower bound. If $i$ has three prime divisors then $(i)_2=2$ and $\Phi_i(a)=\Phi_{i/2}(-a)$. 
\end{proof}

Let $G$ be a finite group. The set $\omega(G)$ is uniquely determined by the subset $\mu(G)$ consisting of element orders that are maximal under divisibility relations. 
The exponent $\exp(G)$ is the least positive integer $k$ such that $g^k=1$ for all $g\in G$. In other words, it is the least common multiple of elements of $\omega(G)$.
Our standard references for the spectra of simple classical groups are \cite{08But.t} and \cite{10But.t} (with corrections from \cite[Lemma 2.3]{16Gr.t}).

Recall that $\pi(G)$ is the set of all prime divisors of the order of $G$. The \emph{prime graph} $GK(G)$ of $G$ is the graph with vertex set $\pi(G)$ in which vertices $r$ and $s$ are adjacent if and only if $r\neq s$ and $rs\in\omega(G)$. A set $\rho\subseteq\pi(G)$ is a \emph{coclique} if the vertices of $\rho$ are pairwise nonadjacent.  We refer to a coclique in $GK(G)$ containing $r\in\pi(G)$ as an \emph{$\{r\}$-coclique}. We denote by $t(G)$ the greatest size of a coclique in $GK(G)$ and by $t(r,G)$ the greatest size of $\{r\}$-cocliques in $GK(G)$. A prime $r\in\pi(G)$ is called \emph{large} with respect to $G$ if $t(r,G)=t(G)$. Our standard references for adjacency in the prime graphs of simple groups are \cite{05VasVd.t} and \cite{11VasVd.t}.

As we mentioned above, we denote simple classical groups according to the \emph{Atlas of Finite Groups}~\cite{85Atlas}. Along with this notation, we write $L_n^+(q)$ for $L_n(q)$ and $L_n^-(q)$ for $U_n(q)$. If $L$ is equal to $L_n^\pm(q)$, $S_{2n}(q)$, $O_{2n+1}(q)$, or $O_{2n}^\pm(q)$, then we say that $L$ is \emph{a group over field of order $q$} (despite the fact that $U_n(q)$ originates from a subgroup of $SL_n(q^2)$) and refer to $n$ as $\prk(L)$. In other words, $\prk(L)$ is the dimension of $L$ if $L$ is linear or unitary and the Lie rank of $L$ otherwise.

It is well known that a prime $r$ lies in $\pi(L_n(q))$ if and only $r$ divides $q$ or $e(r,q)\leq n$. To write similar conditions for other classical groups, the following functions were introduced in \cite{05VasVd.t}:
$$\nu(k)=\begin{cases} 2k& \text{if $k$ is odd},\\
                       k/2& \text{if } (k)_2=2,\\
                       k&\text{if } (k)_2>2,
    
\end{cases} \quad \eta(k)=\begin{cases} k &\text{if $k$ is odd},\\
                       k/2 &\text{if $k$ is even.} 
                       \end{cases}$$

Observe that $R_i(-q)=R_{\nu(i)}(q)$ and $r_i(q)$  divides $q^{\eta(i)}+(-1)^i$.     Now given a simple classical group $L$ over a field of order $q$ and characteristic $p$ and $r\in\pi(L)\setminus\{p\}$, we define $\varphi(r,L)$ as follows:
$$\varphi(r,L)=\begin{cases} e(r,q) &\text{if } L=L_n(q),\\
                             \nu(e(r,q)) &\text{if } L=U_n(q),\\
                             \eta(e(r,q))& \text{otherwise}.    
\end{cases}$$

\begin{lemma}\label{l:adj_criteria}
Let $L$ be a simple classical group a field of order $q$ and characteristic $p$ with $\prk L=n\geq 4$. Let  $r$, $s\in\pi(L)\setminus\{2,p\}$.
Put $k=e(r, q)$, $l=e(s, q)$ and suppose that $\varphi(r,L)\leq\varphi(s,L)$. Then the following statements hold.
\begin{enumerate}
\item  If $L=L^\varepsilon_n(q)$ and $\varphi(r,L)\geq 2$, then $r$ and $s$ are adjacent in $GK(L)$ if and only if $\varphi(r,L)+\varphi(s,L)\leq n$ or $\varphi(r,L)$ divides $\varphi(s,L)$.
\item  If $L\in\{O_{2n+1}(q), S_{2n}(q)\}$, then $r$ and $s$ are adjacent in $GK(L)$ if and only if $\varphi(r,L)+\varphi(s,L)\leq n$ or $\frac{l}{k}$ is an odd integer.
\item If $L=O^\varepsilon_{2n}(q)$, then $r$ and $s$ are adjacent in $GK(L)$ if and only if $2\varphi(r,L)+2\varphi(s,L)\leq 2n-(1-\varepsilon(-1)^{k+l})$, or $\frac{l}{k}$ is an odd integer, or $\varepsilon = +$, $n$ is even, $s\in R_n(q)$, $r\in R_{n/2}(q)$.
\end{enumerate}
\end{lemma}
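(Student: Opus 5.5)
This adjacency criterion is quoted from \cite{05VasVd.t} and \cite{11VasVd.t}, so the plan is to rederive it from the standard description of semisimple elements. If $r,s\neq p$, then $rs\in\omega(L)$ if and only if some semisimple element of order $rs$ exists. Since $r,s$ are odd and, at least for the parts that matter, coprime to the centre and to the index of $L$ in the corresponding quasisimple or isometry group, we may work in $GL_n(q)$, $GU_n(q)$, $Sp_{2n}(q)$ or $GO^\varepsilon_{2n}(q)$.

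First I will record the size of the minimal torus carrying an element of order $r$. In $GL_n(q)$, an element of order $r$ lies in a cyclic torus of order $q^k-1$, giving an irreducible block of dimension $k=e(r,q)$. In $GU_n(q)$ the block has dimension $\nu(k)=e(r,-q)$. In symplectic and orthogonal groups the block has dimension $2\eta(k)$: it is a torus of order $q^{\eta(k)}\pm1$, with sign $+$ when $k$ is even, and for odd $k$ it comes in a pair of dual blocks, contributing $q^k-1$ on a hyperbolic space. In every case the block has $\varphi(r,L)$ as its ``half-dimension''.

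Second I will treat sufficiency. If $\varphi(r,L)+\varphi(s,L)\le n$, take a direct sum of the two blocks on an orthogonal decomposition, filling the remainder with a nondegenerate complement; the resulting element has order $rs$. In the orthogonal case, the type of the complement must match $\varepsilon$. The blocks have Witt types determined by the parities of $k$ and $l$, which gives the term $1-\varepsilon(-1)^{k+l}$: one extra dimension of room is lost exactly when the forced types disagree with $\varepsilon$. For the divisibility condition, a single torus of order $q^{l}-1$ (linear case, $k\mid l$) contains both $r$ and $s$. In the classical case, $q^{\eta(l)}\pm1$ is divisible by $r$ precisely when $l/k$ is odd, by the usual $\gcd$ rules $(q^a-1,q^b+1)$. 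For unitary groups, replace $q$ by $-q$. The exceptional case $O^+_{2n}$ with $n$ even, $s\in R_n(q)$ and $r\in R_{n/2}(q)$ is realized inside the torus of order $(q^{n/2}+1)^2$, a product of two $O^-_n$ blocks.

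Third, and this is the main obstacle, comes necessity. Let $g$ be semisimple of order $rs$. Decompose the natural module into homogeneous components for $\langle g\rangle$, an orthogonal decomposition in the classical case. Either some irreducible $\langle g\rangle$-block has eigenvalues of order $rs$, or $g^s$ and $g^r$ act nontrivially on different blocks. In the latter case the dimensions add, giving $\varphi(r)+\varphi(s)\le n$, subject to the orthogonal parity correction, which I will check by a Witt-index count. In the former case, the block dimension is $e(rs,q)=[k,l]$ with the appropriate $\nu$ or $\eta$ modification, and it must not exceed $n$ (resp.\ $2n$). When $\varphi(r)\ge2$ and the divisibility fails, $[k,l]\ge 2l$ already forces the sum condition, except in the boundary case in $O^+_{2n}$ that produces the listed exception. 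The delicate bookkeeping is the orthogonal $\pm$-type accounting. It also remains to verify that passing from the isometry group to the simple group loses no elements of order $rs$, since $r,s$ are odd and coprime to $|Z|$ in the relevant situations, with $\varphi(r,L)\ge2$ excluding $r\mid q-\varepsilon$ in the linear case.
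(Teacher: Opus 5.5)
The paper does not prove this lemma; it quotes \cite[Propositions 2.1, 2.2]{05VasVd.t} and \cite[Propositions 2.4, 2.5]{11VasVd.t}. Rederiving the criterion from the block/torus structure of semisimple elements is the standard route, and your outline for (i) is sound. However, the symplectic/orthogonal part contains one false claim and one wrong key inequality.

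The false claim is your realization of the exceptional case. If $r\in R_{n/2}(q)$, then $r\mid q^{n/2}-1$. Since $r$ is odd and $(q^{n/2}-1,q^{n/2}+1)\mid 2$, $r$ does not divide $q^{n/2}+1$. So the torus of order $(q^{n/2}+1)^2$ in $O^-_n(q)\times O^-_n(q)$ contains no element of order $r$ at all. The element of order $rs$ must live in a single block. Indeed $rs\mid q^n-1$, and a Singer cycle of $GL_n(q)$ acting on $W\oplus W^*$, with $W$ a maximal totally singular subspace, gives a cyclic torus of order $q^n-1$ in $SO^+_{2n}(q)$. Its odd-order elements lie in $\Omega^+_{2n}(q)$ and survive modulo the centre. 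This is exactly why the case is exceptional. For $n/2$ odd, the two-block configuration puts $r$ on a hyperbolic $n$-space and $s$ on an $O^-_n$-space, of total type $-$, so it fits only in $O^-_{2n}(q)$. This also contradicts your own necessity paragraph, which places the exception in the single-block case. A smaller omission of the same kind: for the odd-ratio condition in $O^\varepsilon_{2n}(q)$, you must check that the block carrying $q^{\eta(l)}\pm1$ leaves room for a complement of the right type. It does, because $s\in\pi(L)$ forces $\eta(l)\le n-1$ whenever that block's type differs from $\varepsilon$.

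The inequality driving your necessity step is false in (ii)--(iii). There, ``divisibility fails'' means $l/k$ is not an odd integer. This includes $k\mid l$ with $l/k$ even, where $[k,l]=l<2l$. Also, $\varphi(r,L)\ge2$ is not assumed in (ii)--(iii). Moreover, the half-dimension of an irreducible block on which $g$ has eigenvalues of order $rs$ is not a uniform ``$\eta$-modification'' of $[k,l]$. It equals $[\eta(k),\eta(l)]=[k,l]/2$ (a block of type $-$, torus $q^b+1$) when $(k)_2=(l)_2\ge 2$, and $[k,l]$ (a hyperbolic block) otherwise.

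What you actually need is the following. If $\eta(k)\le\eta(l)$ and $l/k$ is not an odd integer, then $\eta(k)+\eta(l)\le[k,l]$; check separately the cases $k\mid l$ with even quotient, $l\mid k$, and neither. Equality holds only when $\{k,l\}=\{m,2m\}$ with $m$ odd. In the type-$-$ case you even have $\eta(k)+\eta(l)<[\eta(k),\eta(l)]$, which supplies the extra unit required in $O^-_{2n}(q)$ when $k,l$ are both even. Combine this with the fact that a block whose type differs from $\varepsilon$ needs a nonzero complement. Then the only configuration escaping the sum condition is a hyperbolic block of half-dimension $n$ in $O^+_{2n}(q)$ with $\{k,l\}=\{n/2,n\}$ and $n/2$ odd.

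In that case $\eta(k)=\eta(l)=n/2$, so the hypothesis $\varphi(r,L)\le\varphi(s,L)$ does not determine which of $r,s$ lies in $R_n(q)$. A correct argument yields the exception in this symmetric form, and that is how the last condition in (iii) must be read.
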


\begin{proof}
This follows from \cite[Propositions 2.1 and 2.2]{05VasVd.t} and \cite[Propositions 2.4 and 2.5]{11VasVd.t}.
\end{proof}

\begin{lemma}\label{l:adj_criteria1}
Let $L$ be a simple classical group a field of order $q$ and characteristic $p$ with $\prk L=n\geq 4$. Let  $r\in\pi(L)\setminus\{2,p\}$. If $\varphi(r,L)\leq n-2$, then $pr,2r\in\omega(L)$.
\end{lemma}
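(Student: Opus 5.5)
The plan is to exhibit, for each such $r$, an explicit semisimple element of order $r$ in $L$ whose centralizer contains both a nontrivial unipotent element (giving $pr$) and an element of order $2$ (giving $2r$). Since $\varphi(r,L)\leq n-2$, an element of order $r$ lives in a torus of a natural subgroup of small rank. The remaining $2$ dimensions (or rank $2$) give room for a commuting subgroup such as $SL_2(q)$, $SU_2(q)$, $Sp_2(q)$ or $\Omega$ of small dimension. That subgroup contains elements of order $p$ and of order $2$ (recall $r\neq 2,p$), and both commute with the $r$-element.

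\emph{Linear and unitary groups.} Let $L=L_n^\varepsilon(q)$ with $k=\varphi(r,L)\leq n-2$. The prime $r$ divides the order of a cyclic torus of $GL_k^\varepsilon(q)$ (of order $q^k-(\varepsilon1)^k$). Take the block subgroup $SL^\varepsilon_k(q)\times SL_2^\varepsilon(q)\times 1_{n-k-2}$ of $SL_n^\varepsilon(q)$. More precisely, take $g=\operatorname{diag}(x,\,y,\,1,\dots)$, where $x\in GL_k^\varepsilon(q)$ has order $r$ and $y\in SL_2^\varepsilon(q)$ has order $p$ (resp.\ $2$), and adjust the determinant to $1$ by a scalar in a remaining coordinate. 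If $k=n-2$ there is no free coordinate; then use $\det(x)$ of $r$-power order and absorb it into the $GL_2^\varepsilon$ block, i.e.\ pick $y\in GL_2^\varepsilon(q)$ with $\det y=\det x^{-1}$ and with $y$ of order $p\cdot(\text{something coprime to } r)$ or containing an involution.

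Then pass to the image in $PSL$. The order of the image of $g$ is divisible by $rp$ (resp.\ $2r$) unless $g$ is scalar modulo the center. The $r$-part cannot be killed, because $x$ acts irreducibly with nontrivial eigenvalues on a $k$-space while acting trivially elsewhere, or differently from $y$. Similarly the unipotent part survives, since the center consists of semisimple elements. For the involution one must check that $y^{\text{2-part}}$ is not scalar modulo the center; choosing $y=\operatorname{diag}(-1,-1)$ placed asymmetrically against $1$'s on the rest, or an element of order $4$ in $SL_2$ whose square is $-1$ on two coordinates only, handles this. The case $n-k=2$ with no identity coordinates is the delicate one: there I would take $y$ of order $2(q-\varepsilon)_2$-type in $GL_2^\varepsilon$ with distinct eigenvalues, so that $g^r$ is a non-scalar element of even order.

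\emph{Symplectic and orthogonal groups.} Let $k=\eta(e(r,q))=\varphi(r,L)\leq n-2$. An $r$-element lies in $Sp_{2k}(q)$ (resp.\ $\Omega^{\pm}_{2k}(q)$ of suitable sign), acting fixed-point-freely on a nondegenerate $2k$-subspace $U$. Its orthogonal complement has dimension $2n-2k\geq 4$ (or $\geq5$ in the odd case), and it carries $Sp_4(q)$, $\Omega_5(q)$ or $\Omega^\pm_4(q)$. These contain unipotent elements and involutions in their commutator-subgroup part, and they centralize the $r$-element. Passing to the simple quotient by the center $\{\pm1\}$ keeps orders $pr$ and $2r$, because an involution $-1_{U^\perp}\oplus 1_U$ is not $-1$. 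Such an involution belongs to $\Omega$ when $\dim U^\perp=4$ with suitable discriminant; otherwise use an involution with a $2$-dimensional or $4$-dimensional $-1$-eigenspace inside $U^\perp$ of square discriminant, which is available since $\dim U^\perp\ge4$.

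The main obstacle is bookkeeping rather than ideas: ensuring that the constructed elements lie in the derived group (determinant $1$, spinor norm $1$) and that their images in the projective simple group still have the required orders, especially in the boundary case $k=n-2$ for $L_n^\varepsilon(q)$. There the $r$-part of the center could interfere if $r\mid(n,q-\varepsilon1)$. I would handle this by noting that $x$ has non-scalar eigenvalue structure on $V$, so no power of $g$ that is a nontrivial $r$-element can be central. Alternatively, one can simply invoke the adjacency criteria of \cite{05VasVd.t, 11VasVd.t} directly, together with Lemma~\ref{l:adj_criteria}: the prime $r$ is adjacent to $p$ and to $2$ precisely when $\varphi(r,L)+1\le n-1$, which is our hypothesis.
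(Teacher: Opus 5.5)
Your argument is correct, but it takes a different route from the paper. The paper's proof is a one-line citation of the Vasil'ev--Vdovin adjacency criteria for $r$ versus $p$ and for $r$ versus $2$ (\cite[Proposition~3.1]{05VasVd.t}, \cite[Section~4]{11VasVd.t}), specialized to the range $\varphi(r,L)\leq n-2$. You instead prove the needed ``if'' direction directly. You place an element $x$ of order $r$ in $GL_k^\varepsilon(q)$, $Sp_{2k}(q)$ or $\Omega^{\pm}_{2k}(q)$ acting on a nondegenerate subspace $U$. The complement of $U$ has dimension at least $2$ (resp.\ $4$), and you use it to host a commuting unipotent element or involution of the derived group. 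Your version is self-contained and elementary. The citation is shorter, and its sources also supply the converse (nonadjacency) statements that the paper uses elsewhere.

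Two corrections to the details. First, your linear/unitary bookkeeping can be simplified, and the case $k=n-2$ is not delicate.
\begin{itemize}
\item For $k\geq 2$ you have $r\nmid q-\varepsilon$. So $\det x=1$ automatically, and no nontrivial power of $x$ is scalar.
\item A determinant correction is needed only for $k=1$, and there $n\geq 4$ leaves a free coordinate.
\item For $k=n-2$, the elements $\operatorname{diag}(x,u)$ with $u$ unipotent and, for $q$ odd, $\operatorname{diag}(x,-I_2)$ already have no nontrivial scalar powers. Their images therefore have orders $pr$ and $2r$.
\end{itemize}
Second, your closing alternative is inaccurate as written. Lemma~\ref{l:adj_criteria} only treats primes different from $2$ and $p$, so it says nothing here. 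Also the ``precisely when'' fails: in $S_{2n}(q)$, a prime with $\varphi(r,L)=n-1$ is adjacent to both $p$ and $2$ via $Sp_{2n-2}(q)\times Sp_2(q)$. Only the ``if'' direction is needed, and that is exactly what the cited propositions provide.
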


\begin{proof}
This follows from \cite[Propositions 3.1]{05VasVd.t} and \cite[Section 4]{11VasVd.t}.
\end{proof}

For $\sigma\subseteq\pi(L)\setminus\{p\}$,  
set $E(\sigma,L)=\{e(r,q)~|~r\in\sigma\}$. Let $t(L)\geq 5$ and let $\rho$ be a  coclique of greatest size in $GK(L)$. Since $t(p, L)\leq 4$ by \cite[Table 4]{05VasVd.t}, 
the set $E(\rho, L)$ is well-defined.  We define $J(L)$ to be  the union of $E(\rho, L)$, where $\rho$ runs over all cocliques of greatest size in $GK(L)$, and $E(L)$ to be the intersection of these sets.

\begin{table}[!th]
\caption{Cocliques of greatest size in classical groups}\label{tab:tL}
\begin{center}
{\begin{tabular}{|c|l|c|c|c|}
  \hline
  $L$ & Conditions & $t(L)$ & $E(L)$ & $J(L)\setminus E(L)$ \\
  \hline
 
  $L_n(q)$ & $n\geq9$ is odd and & $\frac{n+1}{2}$ & $\{i\mid \frac{n}{2}<i\leq n\}$ & $\varnothing$\\
           & $(n,q)\neq(9,2),(11,2)$ &  & &\\ 
  & $n=11$ and $q=2$ & 5 & $\{7,8,9,11\}$ & $\{5,10\}$\\
  & $n\geq10$ is even and & $\frac{n}{2}$ & $\{ {i}\mid \frac{n}{2}<i<n\}$ & $\{\frac{n}{2}, n\}$\\
  & $(n,q)\neq(10,2),(12,2)$ &  & &\\ 
   & $n=12$ and $q=2$ & 6 & $\{7,8,9,10,11,12\}$ & $\varnothing$\\    
  \hline
   $U_n(q)$ & $n\geq 9$ is odd & $\frac{n+1}{2}$ & $\{i\mid \frac{n}{2}< \nu\left(i\right)\leq n\}$ & $\varnothing$\\
  & $n\geq 10$ is even & $\frac{n}{2}$ & $\{ {i}\mid \frac{n}{2}< \nu\left(i\right)<n\}$ & $\{\frac{n}{2}, n\}$\\
  \hline
  $S_{2n}(q)$ or  & $n\geq 8$, $n\equiv{0}\pmod 4$& $\frac{3n+4}{4}$ &
  $\{i\mid \frac{n}{2}\leq\eta(i)\leq n\}$ &  $\varnothing$\\
  $O_{2n+1}(q)$ & $n\geq5$, $n\equiv{1}\pmod 4$, and  & $\frac{3n+5}{4}$ &
  $\{i\mid \frac{n}{2}<\eta(i)\leq n\}$ &  $\varnothing$\\
   & $(n,q)\neq(5,2)$  &  &   &  \\   
  &  $n\geq6$, $n\equiv{2}\pmod 4$, and & $\frac{3n+2}{4}$ & $\{i\mid\frac{n}{2}<\eta(i)\leq n\}$ & $\{\frac{n}{2},n\}$\\
    & $(n,q)\neq(6,2)$ & & & \\
  & $n\geq7$, $n\equiv{3}\pmod 4$, and & $\frac{3n+3}{4}$ & $\{i\mid \frac{n+1}{2}<\eta(i)\leq n\}$ &  $\{ \frac{n-1}{2},n-1,$\\
 &  $(n,q)\neq(7,2)$  &  & & $n+1\}$\\
 & $n=6$ and $q=2$ & 5 & $\{3,5,8,10,12\}$ & \\ 
   & $n=7$ and $q=2$ & 6 & $\{5,7,10,12,14\}$ & $\{3,8\}$ \\  
 \hline
 $O_{2n}^+(q)$ & $n\geq 8$, $n\equiv{0}\pmod 4$ & $\frac{3n}{4}$ & $\{i\mid
\frac{n}{2}\leq\eta(i)\leq n,$ & $\varnothing$\\
 & & & $i\neq2n\}$ & \\
 & $n\geq 9$, $n\equiv{1}\pmod 4$ & $\frac{3n+1}{4}$ & $\{i\mid
 \frac{n}{2}<\eta(i)\leq n,$ & $\{n-1, n+1\}$\\
 &  &  & $i\neq2n,n+1\}$ &\\
 & $n\geq10$, $n\equiv{2}\pmod 4$ & $\frac{3n-2}{4}$ & $\{i\mid
\frac{n}{2}<\eta(i)\leq n,$ & $\{\frac{n}{2},n\}$\\
 &  &  & $i\neq2n\}$ &\\
 & $n\geq7$, $n\equiv{3}\pmod 4$ & $\frac{3n+3}{4}$ & $\{i\mid
\frac{n-1}{2}\leq\eta(i)\leq n,$ & $\varnothing$\\
 & & & $i\neq2n,n-1\}$ &\\
 \hline
 $O_{2n}^-(q)$& $n\geq8$, $n\equiv{0}\pmod 4$ & $\frac{3n+4}{4}$ & $\{i\mid
\frac{n}{2}\leq\eta(i)\leq n\}$ & $\varnothing$\\
 & $n\geq 9$, $n\equiv{1}\pmod 4$ & $\frac{3n+1}{4}$ & $\{i\mid
\frac{n}{2}<\eta(i)\leq n,$
 & $\{\frac{n+1}{2}, n-1\}$\\
 & & & $i\neq n,\frac{n+1}{2}\}$ & \\
 & $n=6$, $q=2$ & $5$ & $\{3,8,5,10,12\}$ & $\varnothing$ \\
 & $n=6$, $q>2$ & $5$ & $\{8,5,10,12\}$ &
 $\{3, 6\}$\\
 & $n\geq10$, $n\equiv{2}\pmod 4$ & $\frac{3n+2}{4}$ & $\{i\mid
\frac{n}{2}<\eta(i)\leq n\}$ &
 $\{\frac{n}{2}, n-2, n\}$\\
 & $n\geq 7$, $n\equiv{3}\pmod 4$, and & $\frac{3n+3}{4}$ & $\{i\mid
\frac{n-1}{2}\leq\eta(i)\leq n,$ & $\varnothing$\\
 & $q\neq2$ &  &$i\neq n,\frac{n-1}{2}\}$ &\\
 & $n=7$, $q=2$ & 5 & $\{5,10,12,14\}$ & $\{3,8\}$ \\ 
 \hline
\end{tabular}}
\end{center}
\end{table}

\begin{lemma}\label{l:tL}
Let $L$ be a simple classical group over a field of order $q$ and characteristic $p$ with $t(L)\geq5$. Suppose that $\rho$ is a coclique of greteast size in~$GK(L)$. If $J(L)=E(L)$, then
$E(\rho,L)=E(L)$. If $J(L)\neq E(L)$, then $E(\rho,L)=E(L)\cup\{j\}$ for some $j\in J(L)\setminus
E(L)$. In particular, $|E(L)|\leq t(L)\leq |E(L)|+1$. The sets $E(L)$, $J(L)\setminus E(L)$ and numbers
$t(L)$ are as given in Table~\emph{\ref{tab:tL}}.
\end{lemma}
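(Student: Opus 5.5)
The argument will be a direct consequence of the adjacency criteria in Lemma~\ref{l:adj_criteria}, applied to cocliques of the prime graph. These criteria are stated for primes different from $2$ and $p$, so the first step removes those two primes. We have $t(p,L)\le 4<t(L)$ by \cite[Table 4]{05VasVd.t}, so $p$ lies in no coclique of greatest size. For $r=2$ we use the known values of $t(2,L)$ from \cite{05VasVd.t}: for $q$ odd and the ranks in Table~\ref{tab:tL}, $2$ is adjacent to all primes $r$ with $\varphi(r,L)\le n-2$, by Lemma~\ref{l:adj_criteria1}. Hence $t(2,L)$ is small. The exceptional small cases $(n,q)$ with $q=2$ in the table would be handled separately, by direct inspection of the primes involved.

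With $2$ and $p$ excluded, any coclique $\rho$ of greatest size has all primes coprime to $2p$. Moreover, two primes $r,s$ with $e(r,q)=e(s,q)$ are adjacent: they lie in a common cyclic torus, and by the criterion, equal $\varphi$ divide each other. So $\rho$ is determined by the distinct values $E(\rho,L)$, and $|\rho|=|E(\rho,L)|$. The problem therefore becomes combinatorial: find the largest subsets $I$ of indices $i$ with $R_i(q)\ne\varnothing$ that are pairwise nonadjacent under the arithmetic rule of Lemma~\ref{l:adj_criteria}.

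For the linear case the rule reads: $i,j$ are nonadjacent iff $i+j>n$ and neither divides the other. For unitary groups the same rule holds with $\nu(i)$ in place of $i$; for symplectic, orthogonal and odd-dimensional orthogonal groups it holds with $\eta$, the extra condition that $l/k$ is not odd, and the parity correction in the $O^\pm_{2n}$ case.

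\begin{itemize}
\item[] In each family I would first show that every index with $\varphi$-value in the range listed as $E(L)$ is pairwise nonadjacent to the others. The sums exceed $n$ because each value is greater than $n/2$, and divisibility fails because a proper multiple would exceed $n$. The remaining exclusions in the table ($i\ne 2n$, $n+1$, etc.) come from the ``$l/k$ odd'' and $\varepsilon$ clauses.
\item[] Then I would show that at most one further index can be added. Any index $j$ with $\varphi$-value at most $n/2$ (or at most the threshold) is adjacent to all but at most one member of the range set. This is because $\varphi(j)+\varphi(i)\le n$ for the smaller members, or $\varphi(j)$ divides one of them. Enumerating the survivors gives exactly $J(L)\setminus E(L)$.
\end{itemize}

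This also shows that each greatest coclique contains all of $E(L)$. If some $i\in E(L)$ were omitted, the replacement indices would all lie in the low range, and two low indices are always adjacent (their sum is at most $n$), so the size would drop. This gives the dichotomy $E(\rho,L)=E(L)$ or $E(L)\cup\{j\}$, and $t(L)$ is then counted directly.

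Bang--Zsigmondy (Lemma~\ref{l:zsigmondy}) is needed to guarantee that every index in $E(L)$ actually occurs. Its exceptions ($q=2$, $i=6$) produce the special rows with $q=2$.

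The main obstacle is the case-by-case bookkeeping for orthogonal groups modulo $4$, in particular the $O^\pm_{2n}$ parity term $1-\varepsilon(-1)^{k+l}$ and the extra adjacency $R_n, R_{n/2}$ in $O^+$. This is where the listed extra elements such as $\{n/2,n-2,n\}$ arise. In practice I would cite \cite[Propositions 4.1--4.4]{05VasVd.t} and \cite{11VasVd.t}, where these cocliques are already computed, and only verify the small exceptional rows with $q=2$ by hand.
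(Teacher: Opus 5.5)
Your plan ends up where the paper's proof does. The paper simply cites \cite[Tables 2, 3]{11VasVd.t}, and you likewise propose to cite \cite{05VasVd.t,11VasVd.t} for the generic rows and to check only the $q=2$ rows by hand. The problem is that the cited table is wrong in one generic row, and your plan would import that error.

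The row is $L=O_{12}^-(q)$ with $q>2$. The paper states that \cite[Table 3]{11VasVd.t} describes $J(L)\setminus E(L)$ incorrectly here, and the reason it gives indicates that the cited description admits the index $4=n-2$. This is what the pattern $\{n/2,n-2,n\}$ for $O^-_{2n}(q)$, $n\equiv 2\pmod 4$, would give at $n=6$. But $12\in E(L)$ and $12/4=3$ is odd, so $r_4(q)$ and $r_{12}(q)$ are adjacent by Lemma~\ref{l:adj_criteria}(iii). Hence no greatest coclique contains $r_4(q)$, and the correct set is $\{3,6\}$.

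This can only happen at $n=6$. Indeed, $2n/(n-2)=2+4/(n-2)$ is an integer only for $n\in\{3,4,6\}$. That is why Table~\ref{tab:tL} has a separate row for $O_{12}^-(q)$, $q>2$, and why the paper's proof adds an explicit correction. Since this row has $q>2$, checking only the $q=2$ rows by hand would not catch it.

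To close the gap, you must either carry out your enumeration for this row, or cite \cite{11VasVd.t} together with this correction. Carrying out the enumeration means actually applying the clause ``$l/k$ is an odd integer''; you mention it but never use it.

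If you intended your combinatorial sketch to stand on its own rather than be cited, two further steps need repair:
\begin{enumerate}
\item Lemma~\ref{l:adj_criteria}(i) says nothing about primes with $\varphi(r,L)=1$, so these need separate treatment for linear and unitary groups.
\item For $O^\pm_{2n}(q)$ the first clause has threshold $2n-(1-\varepsilon(-1)^{k+l})$, which can equal $2n-2$. So two indices whose $\eta$-values sum to $n$ need not be adjacent by that clause. For example, $n/2$ and $n$ in $O^+_{2n}(q)$ with $n\equiv 2\pmod 4$ are adjacent only by the third clause. Your step ``two low indices are always adjacent'' therefore needs more care in the orthogonal case.
\end{enumerate}
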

\begin{proof}
See \cite[Tables 2, 3]{11VasVd.t}. Note that Table 3 of \cite{11VasVd.t} gives a wrong description of $J(L)\setminus E(L)$ for $L=O_{12}^-(q)$: in this case, $J(L)\setminus E(L)=\{3,6\}$ because $r_4(q)$ and $r_{12}(q)$ are adjacent in $GK(L)$ by Lemma~\ref{l:adj_criteria}{\rm(iii)}.  
 
\end{proof}

We will deal mostly with classical groups $L$ satisfying $5\leq t(L)\leq 8$, and these groups are extracted from Table \ref{tab:tL} to Table \ref{tab:5tL8}.

\begin{table}[!hbt]
   \centering
   \caption{The simple classical groups $L$ with $5\leq t(L)\leq 8$} \label{tab:5tL8}
   \begin{tabular}{|c|c|}
\hline
$t(L)$ &  $L$  \\ \hline
5 & $L^{\pm}_{9}(q)$ (except $L_9(2)$), $L^{\pm}_{10}(q)$ (except $L_{10}(2)$), $L^+_{11}(2)$  \\
  & $S_{10}(q)$ ($q\neq 2$), $O_{11}(q)$ ($q\neq 2$), $S_{12}(q)$, $O_{13}(q)$,  $O_{12}^-(q)$, $O_{14}^-(2)$ \\ \hline
6 & $L^{\pm}_{11}(q)$  (except $L_{11}(2)$), $L^{\pm}_{12}(q)$,  $S_{14}(q)$, $O_{15}(q)$, $O_{14}^\pm(q)$ (except $O_{14}^-(2))$, $O_{16}^+(q)$\\ \hline
7 & $L^{\pm}_{13}(q)$, $L^{\pm}_{14}(q)$, $S_{16}(q)$,  $O_{17}(q)$, $O_{16}^-(q)$, $O_{18}^\pm(q)$, $O_{20}^+(q)$ \\ \hline
8 & $L^{\pm}_{15}(q)$, $L^{\pm}_{16}(q)$, $S_{18}(q)$,  $O_{19}(q)$, $S_{20}(q)$,  $O_{21}(q)$, $O_{20}^-(q)$ \\ \hline
\end{tabular}
\end{table}

\begin{lemma}\label{l:tri}
Suppose that $L=L_n^\varepsilon(q)$ and $t(L)\geq 5$. Then the following hold.
\begin{enumerate}
\item If $r\in R_i(\varepsilon{q})$, where $2\leq i<n/2$,
then $t(r,L)\leq i$.
\item If $r\in R_i(\varepsilon{q})$, where $n/3<i<n/2$,
then $t(r,L)=i$.    
\end{enumerate}
\end{lemma}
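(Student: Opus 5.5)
The plan is to bound $\{r\}$-cocliques using the adjacency criterion of Lemma~\ref{l:adj_criteria}(i) together with Lemma~\ref{l:adj_criteria1}, and then to build a coclique of the required size from primitive prime divisors. Throughout, $\varphi(r,L)=i$, and $n\geq 9$ because $t(L)\geq5$ (Table~\ref{tab:5tL8}).

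For (i), let $\rho$ be an $\{r\}$-coclique and first suppose $r$ is odd. Since $i<n/2\leq n-2$, Lemma~\ref{l:adj_criteria1} gives $pr,2r\in\omega(L)$. Hence every $s\in\rho\setminus\{r\}$ is an odd prime different from~$p$, and we set $j=\varphi(s,L)$. By Lemma~\ref{l:adj_criteria}(i), nonadjacency of $r$ and $s$ forces $i+j>n$, and forces neither of $i,j$ to divide the other. So $j\in\{n-i+1,\dots,n\}$ and $j$ is not a multiple of~$i$. Two primes with the same value of $\varphi(\cdot,L)$ are adjacent, since each value divides itself. Hence the elements of $\rho\setminus\{r\}$ have pairwise distinct values of $j$ in this interval. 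The interval $\{n-i+1,\dots,n\}$ consists of $i$ consecutive integers, so it contains exactly one multiple of~$i$. Therefore $|\rho\setminus\{r\}|\leq i-1$, and so $|\rho|\leq i$.

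The case $r=2$ is the step I expect to be the main obstacle. It can only occur for $i=2$, because $e(2,q)\in\{1,2\}$, and here Lemma~\ref{l:adj_criteria} does not apply. I would invoke the adjacency criterion for the prime $2$ in $L_n^\varepsilon(q)$ from \cite{05VasVd.t}. For $n\geq9$ it shows that $2$ is adjacent to $p$ and to every odd $s$ with $\varphi(s,L)\leq n-2$. It also shows that $2$ is nonadjacent to primes of at most one of the classes $R_{n-1}(\varepsilon q)$, $R_n(\varepsilon q)$, the class being determined by $n$ and $(q-\varepsilon)_2$. A class with a single value of $\varphi$ contributes at most one vertex to a coclique, so $t(2,L)\leq 2=i$.

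For (ii), assume $n/3<i<n/2$. The multiples of $i$ in $\{n-i+1,\dots,n\}$ are then exactly $\{2i\}$: indeed $i<n-i+1$, $2i\geq n-i+1$ since $3i>n$, and $2i\leq n$. For each $j$ in the set $\{n-i+1,\dots,n\}\setminus\{2i\}$, choose $s_j\in R_j(\varepsilon q)$. These primes exist by Lemma~\ref{l:zsigmondy}, applied to $\varepsilon q$ and using $R_j(-q)=R_{\nu(j)}(q)$; the exceptional pairs do not arise because $j>n/2\geq 4$. Each $s_j$ is odd and different from $p$, and $\varphi(s_j,L)=j$. Since $r$ itself is odd when $i>3$, Lemma~\ref{l:adj_criteria}(i) applies to every pair. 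For $s_j,s_{j'}$ we have $j+j'>n$, and neither index divides the other because both lie in $(n/2,n]$. For $r,s_j$ we have $i+j>n$, $i\nmid j$ since $j\neq 2i$, and $j\nmid i$ since $j>i$. Thus $\{r\}\cup\{s_j\}$ is a coclique of size $1+(i-1)=i$. Combined with (i), this gives $t(r,L)=i$.
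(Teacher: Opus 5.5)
The paper gives no argument for this lemma; it just cites \cite[Lemma~3.9]{26Sta.t}. Your route is a genuinely different, self-contained one: you derive the lemma from Lemma~\ref{l:adj_criteria}(i), Lemma~\ref{l:adj_criteria1} and the criterion for the prime $2$ in \cite{05VasVd.t}. Its core is correct. The other vertices of an $\{r\}$-coclique have pairwise distinct $\varphi$-values in $\{n-i+1,\dots,n\}$ and avoid the unique multiple of $i$ there, which is $2i$ when $n/3<i<n/2$. Compared with the citation, this shows exactly which adjacencies and which hypothesis are used. Two steps, however, are not justified as written.

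First, in (i) an odd prime $s\in\rho\setminus\{r\}$ may have $\varphi(s,L)=1$, i.e.\ $s$ divides $q-\varepsilon$. Lemma~\ref{l:adj_criteria}(i) needs the smaller of the two $\varphi$-values to be at least $2$, so it says nothing about such $s$. Lemma~\ref{l:adj_criteria1} only concerns adjacency with $2$ and $p$. You therefore need a separate reason why $r$ and $s$ are adjacent. Either cite the case $e(s,\varepsilon q)=1$ of the criterion in \cite{05VasVd.t}, or argue directly. Take $x\in SL^\varepsilon_i(q)$ of order $r$ and $y=\mathrm{diag}(\lambda,\lambda^{-1},1,\dots,1)\in SL^\varepsilon_{n-i}(q)$ with $\lambda$ of order $s$. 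Since $n-i\geq 5$, the element $x\oplus y$ has eigenvalue $1$, so no nontrivial power of it is scalar and its image in $L$ has order $rs$.

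Second, in (ii) your claim that the exceptions of Lemma~\ref{l:zsigmondy} cannot occur because $j>4$ is false, since $(2,6)$ is an exception. For $n=9$ you have $i=4$ and $j\in\{6,7,9\}$, and $R_6(2)=\varnothing$. At this point you must use the hypothesis $t(L)\geq 5$, which excludes $L_9(2)$ (Table~\ref{tab:5tL8}). This matters: in $L_9(2)$ the prime $5=r_4(2)$ has $t(5,L)=3<4$, so (ii) genuinely fails there. For $n\geq 10$ one has $j\geq 7$, and no exception occurs.

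Finally, in the case $r=2$, say explicitly why $2$ is nonadjacent to at most one of $R_{n-1}(\varepsilon q)$ and $R_n(\varepsilon q)$: it relies on $(q-\varepsilon)_2=2$, which is exactly what $2\in R_2(\varepsilon q)$ means. If instead $4\mid q-\varepsilon$ and $(q-\varepsilon)_2=(n)_2$, then $2$ is nonadjacent to both classes, so the criterion you quote is not true in the generality stated.
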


\begin{proof}
See \cite[Lemma 3.9]{26Sta.t}.
\end{proof}

The two next lemmas are concerned with primes $s_1$ and $s_2$ having
disjoint neighborhoods in $GK(L)$, that is, $s_1$ is not adjacent to $s_2$ and there is no $r\in\pi(L)$ adjacent to both $s_1$ and $s_2$. In Lemma \ref{l:disjoint}, we provide some $s_1$ and $s_2$ with this property for all $L$ with $t(L)\geq 5$, while Lemma \ref{l:unique} is intended to describe all such primes. As we will see in Section \ref{s:structure}, it is convenient to write $S$ and $u$ instead of $L$ and $q$ in Lemma \ref{l:unique}.

\begin{table}[!hbt]
   \centering
   \caption{Vertices of $GK(L)$ with disjoint neighborhoods} \label{tab:disjoint}
$
\begin{array}{|l|c|c|c|c|}
\hline
L&R_1(L)&R_2(L)&m_1(L) & m_2(L)\\
\hline
L_n^\varepsilon(q), n\geq 9&R_n(\varepsilon q)&R_{n-1}(\varepsilon q)& \frac{q^n-\varepsilon^n}{(q-\varepsilon)(n,q-\varepsilon)}&
\frac{q^{n-1}-\varepsilon^{n-1}}{(n,q-\varepsilon)}\\
\hline
S_{2n}(q), O_{2n+1}(q), &\multirow{2}{*}{$R_{2n}(q)$}&\multirow{2}{*}{$R_{n}(q)$}&\multirow{2}{*}{$\frac{q^n+1}{(2,q-1)}$}&\multirow{2}{*}{$
\frac{q^n-1}{(2,q-1)}$}\\
O_{2n+2}^+(q), n\geq 5 \text{ odd}&&&&\\
\hline
S_{2n}(q), n\geq 6 \text{ even}&R_{2n}(q)&R_{n-1}(\epsilon q) & \frac{q^n+1}{(2,q-1)}&\frac{(q^{n-1}-\epsilon)(q+\epsilon)}{(2,q-1)},p(q^{n-1}-\epsilon)\\
\hline
O_{2n+1}(q), n\geq 6 \text{ even}&R_{2n}(q)&R_{n-1}(\epsilon q) & \frac{q^n+1}{(2,q-1)}&\frac{(q^{n-1}-\epsilon)(q+\epsilon)}{(2,q-1)},\frac{p(q^{n-1}-\epsilon)}{(2,q-1)}\\
\hline 
O_{2n}^\varepsilon(q),   n\geq 5 \text{ odd} & R_{2n-2}(q)& R_n(\varepsilon q)&\frac{(q^{n-1}+1)(q+\varepsilon)}{(4,q-\varepsilon)}& \frac{q^n-\varepsilon}{(4,q-\varepsilon)}\\
\hline
O_{2n}^-(q),  n\geq 6 \text{ even } &R_{2n}(q)& R_{n-1}(\epsilon q) &\frac{q^n+1}{(2,q-1)} & \frac{(q^{n-1}-\epsilon)(q+\epsilon)}{(2,q-1)}\\
\hline
\end{array}
$
\end{table}

\begin{lemma}\label{l:disjoint}
Let $L$, $R_1(L)$ and $R_2(L)$ be as in Table~\emph{\ref{tab:disjoint}} (if $n$ is even and $L$ is one of $S_{2n}(q)$, $O_{2n+1}(q)$, or $O_{2n}^-(q)$, then $\epsilon$ is an arbitrary element of $\{+1,-1\}$) and let $p$ be the defining characteristic of $L$. Then $R_i(L)\neq \varnothing$. If $i\in\{1,2\}$ and $s_i\in R_i(L)$, then the following hold.

\begin{enumerate}
\item The neighborhoods of $s_1$ and $s_2$ in $GK(L)$ are disjoint.
\item The elements of $\mu(L)$ divisible by $s_i$ are precisely the numbers specified in the column titled by ``$m_i(L)$'' in Table~\emph{\ref{tab:disjoint}}. In particular, $s_i$ is adjacent only to prime divisors of these numbers in $GK(L)$.
\end{enumerate}
\end{lemma}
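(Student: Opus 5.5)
Nonemptiness of $R_i(L)$ follows from Lemma~\ref{l:zsigmondy}. In every row the relevant index is at least $4$ in absolute terms (since $n\geq 5$, resp.\ $n\geq 9$), and the exceptional pairs only involve $i\leq 6$ with base $\pm2$ or $\pm3$. So the only case to inspect is $R_6(2)$, which could arise as $R_{2n-2}(q)$ or $R_{2n}(q)$ only if $n\leq 4$, or as $R_{n-1}(\epsilon q)$ with $n-1=3$, $\epsilon=-1$, $q=2$. These are all excluded by $n\geq 5$ or $n\geq 6$ and by the definition of $R_i(-q)=R_{\nu(i)}(q)$. One caveat: in rows where $\epsilon$ is arbitrary and $n-1$ is odd, $R_{n-1}(\epsilon q)$ equals $R_{n-1}(q)$ or $R_{2n-2}(q)$, and both are nonempty for $n\geq 6$.

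The main work is (ii). I would take the description of $\mu(L)$ from Buturlakin~\cite{08But.t,10But.t}, with the correction from \cite[Lemma 2.3]{16Gr.t}, and simply read off the maximal element orders divisible by $s_i$. An element of order divisible by $s_i$ is semisimple-part-driven. Since $s_i$ is a primitive divisor of order comparable to the rank (e.g.\ $e(s_1,q)=n$ for $L_n$, $\eta(e(s_1,q))=n$ for $S_{2n}$), the $s_i$-part of any element must lie in a maximal torus containing a cyclic factor of order $q^n-\varepsilon^n$ (resp.\ $q^n\pm1$, $q^{n-1}\pm1$). The centralizer of an $s_i$-element is then a torus, or a torus times a small group such as $SL_1$, $Sp_2$, or $O_2^\pm$. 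Concretely:
\begin{itemize}
\item For $L_n^\varepsilon(q)$ and $s_1$, the centralizer is the Singer-type torus, of order $(q^n-\varepsilon^n)/((q-\varepsilon)(n,q-\varepsilon))$ in $L$.
\item For $s_2$, the centralizer corresponds to the torus $(q^{n-1}-\varepsilon^{n-1})\times(q-\varepsilon)$ modulo scalars. In $L$ this yields the cyclic order $(q^{n-1}-\varepsilon^{n-1})/(n,q-\varepsilon)$ once one checks the determinant condition.
\item For the symplectic and orthogonal rows with $\epsilon$, the extra factor $(q+\epsilon)$, or $p$ from a unipotent element in $Sp_2(q)$ or $O_3(q)$, gives the second entry. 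The case $O_{2n+1}$ needs the additional division by $(2,q-1)$ coming from spinor norm considerations.
\end{itemize}
I would verify each entry against Buturlakin's lists, checking that no other maximal order, mixed with unipotent parts, contains $s_i$. That no unipotent part occurs, except in the $Sp_2$/$O_3$ factor, holds because the $s_i$-element centralizes no nontrivial unipotent element otherwise (compare Lemma~\ref{l:adj_criteria1}, which only applies when $\varphi\leq n-2$).

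Given (ii), part (i) follows by arithmetic. I would show that no prime divides both some $m_1$ and some $m_2$, and that $s_1\nmid m_2$ and $s_2\nmid m_1$. For $L_n^\varepsilon$, $(q^n-\varepsilon^n,q^{n-1}-\varepsilon^{n-1})=q-\varepsilon$, and after the stated divisions $m_1$ is coprime to $q-\varepsilon$, except possibly for primes dividing $(n,q-\varepsilon)$. Those primes must be checked by a $\Phi$-type computation: the $r$-part of $(q^n-\varepsilon^n)/(q-\varepsilon)$ is $(n)_r$, which is absorbed by dividing by $(n,q-\varepsilon)$ only partially. For example, when $r\mid(n,q-\varepsilon)$, $r$ divides $m_1$ exactly when $(q^n-\varepsilon^n)_r>(q-\varepsilon)_r(n,q-\varepsilon)_r$. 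This is the delicate point. I expect the resolution to be that such an $r$ must be excluded from $m_2$ because $m_2$ has been divided by $(n,q-\varepsilon)$ too, and $(q^{n-1}-\varepsilon^{n-1})_r=(q-\varepsilon)_r$ since $r\nmid n-1$. For the other rows, $(q^n+1,q^n-1)\mid 2$ and the factor $(2,q-1)$ removes $2$. Likewise $(q^n+1,(q^{n-1}\mp1)(q\pm1))$ divides $2$, because $q^n+1$ and $q^2-1$ have gcd at most $2$ for $n$ even, and $\gcd(q^n+1,q^{n-1}\mp1)$ divides $q\pm1$. Finally $p$ divides no $m_1$. The orthogonal row with $(4,q-\varepsilon)$ works the same way: $\gcd((q^{n-1}+1)(q+\varepsilon),q^n-\varepsilon)$ divides a power of $2$, which is cancelled by $(4,q-\varepsilon)$ for $n$ odd.

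The main obstacle is the exact $\mu$-bookkeeping in (ii) for the orthogonal and symplectic rows. There the $(2,q-1)$ and $(4,q-\varepsilon)$ denominators and the $p$-containing orders must be matched carefully with Buturlakin's formulas.
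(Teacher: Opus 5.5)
Your proposal is correct and follows essentially the paper's route: nonemptiness by Bang--Zsigmondy (every index that occurs is $5$ or at least $7$), and item (ii) by reading the maximal orders divisible by $s_i$ off Buturlakin's description of the spectra, with the paper citing \cite[Section~3]{05VasVd.t} for the non-adjacency of $s_i$ and $p$ (except $i=2$ in $S_{2n}(q)$, $O_{2n+1}(q)$ with $n$ even) where you use a centralizer remark, and item (i) from the pairwise coprimality of the $m_1$- and $m_2$-numbers. Your $r$-adic check at primes $r\mid(n,q-\varepsilon)$ does go through as you expect (if $r\mid m_1$ then $(n,q-\varepsilon)_r=(q-\varepsilon)_r$, so $(m_2)_r=1$), and in the $(4,q-\varepsilon)$ row the point is that one of $m_1,m_2$ becomes odd, not that both lose their $2$-parts.
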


\begin{proof} Observe that $R_i(L)$ is of the form $R_j(\pm q)$, where either $j\geq 7$ or $j=5$, so $R_i(L)\neq \varnothing$ by Lemma \ref{l:zsigmondy}. 

By \cite[Section 3]{05VasVd.t}, it follows that $s_i$ is not adjacent to $p$ in $GK(L)$ unless $L=S_{2n}(q)$ or $O_{2n+1}(q)$ with $n$ even and $i=2$. Using the description of the spectra of classical groups, it is not hard to determine all $b\in\mu(L)$ that are coprime to $p$ and divisible by $s_i$. A useful observation here is that $[q^{n-1}-\epsilon,q+\epsilon]=(q^{n-1}-\epsilon)(q+\epsilon)/(2,q-1)$ if $n$ is even and $\epsilon\in\{+1,-1\}$. 

Suppose that $L=S_{2n}(q)$ or $O_{2n+1}(q)$ with $n$ even, $i=2$ and $ps_i$ divides some $b\in\omega(L)$. If $L=S_{2n}(q)$ or $p=2$, then $b$ divides $p(q^{n-1}-\epsilon)$ and $p(q^{n-1}-\epsilon)\in\omega(L)$ by \cite[Corollary 2]{10But.t} or \cite[Corollary 3]{10But.t}, respectively.  If  $L=O_{2n+1}(q)$ and $p$ is odd, then $b$ divides  $p(q^{n-1}-\epsilon)/2$ and $p(q^{n-1}-\epsilon)/2\in\omega(L)$ by \cite[Corollary 6]{10But.t}. So (ii) holds. Now (i) follows from (ii) since every number in $m_1(L)$ is coprime to every number in $m_2(L)$. Also (i) can be verified directly by application of the adjacency criterion. 
\end{proof}

\begin{lemma}\label{l:unique}
Let $S$ be a simple classical group over a field of order $u$ and $t(S)\geq 5$. Assume that $u>3$ in all items except {\rm(v)}. Suppose that $s_1\in\pi(S)$ and $s_2\in \pi(S)$ have disjoint neighborhoods in $GK(S)$. Then, up to renumbering of $s_1$ and $s_2$, the following hold.

\begin{enumerate}
\item If $S=L_m^\tau(u)$, where $m$ is even, then  $s_1\in R_{m-1}(\tau u)$. If, in addition, $u-\tau$ does not divide $m$, then $s_2\in R_m(\tau u)$.

\item If $S=L_m^\tau(u)$, where $m$ is odd, then $s_1\in R_m(\tau u)$. If, in addition, $3$ divides $m$,  then $s_2\in R_{m-1}(\tau u)\cup R_{m-2}(\tau u)$.

\item If $S$ is one of $S_{2m}(u)$, $O_{2m+1}(u)$, or $O_{2m+2}^+(u)$,  where $m$ is odd, then $s_1\in R_{2m}(u)$ and $s_2\in R_{m}(u)$.

\item If $S=O_{2m}^\tau(u)$, where $m$ is odd, then  $s_1\in R_m(\tau u)$. In, in addition, $\tau u\neq +5$,  then $s_2\in R_{2(m-1)}(\tau u)$.

\item If $S$ is one of 
$S_{2m}(u)$, $O_{2m+1}(u)$,  or $O_{2m}^-(u)$, where 
$m$ is even, then $s_1\in R_{2m}(u)$. If, in addition, $(m)_2=2$, then $s_2\in R_{m-1}(\tau u)\cup R_{2(m-1)}(u)$ if $S\neq O_{2m}^-(u)$ and $s_2\in R_{m-1}(\tau u)\cup R_{2(m-1)}(u)\cup R_{2(m-2)}(u)$ otherwise.
\end{enumerate}
\end{lemma}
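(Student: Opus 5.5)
The argument rests on one observation. If $s_1$ and $s_2$ have disjoint neighborhoods in $GK(S)$, then no vertex $r\in\pi(S)\setminus\{s_1,s_2\}$ is adjacent to both, and $s_1,s_2$ are not adjacent to each other. So for any coclique $\rho$ of greatest size, at most one of $\rho\cup\{s_1\}$ and $\rho\cup\{s_2\}$ fails to be a coclique only through the vertex it shares with $\rho$. More usefully, the neighborhood $N(s_1)$ of $s_1$ has to be ``almost everything'' for the other prime to find room, and vice versa.

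I will turn this into a counting statement. Every prime of $\pi(S)$ outside $\{s_1,s_2\}$ is adjacent to at most one of them. The plan is to exploit the many primes of small index: by Lemma~\ref{l:adj_criteria1}, every $r$ with $\varphi(r,S)\le \prk S-2$ is adjacent to $2$ and to $p$, and by Lemma~\ref{l:adj_criteria} it is adjacent to every prime of small $\varphi$. First I show that neither $s_1$ nor $s_2$ can have $\varphi(s_j,S)$ small. If $\varphi(s_1,S)\le \prk S/2$, then $s_1$ is adjacent to $2$, to $p$ (when $\varphi\le \prk S-2$), and to all primes $r$ with $\varphi(r,S)+\varphi(s_1,S)\le \prk S$. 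In that case $s_2$ must be nonadjacent to all of these, which forces $s_2$ to be large, say $\varphi(s_2,S)>\prk S-\varphi(s_1,S)$. Yet $s_2$ is still adjacent to primes in $R_1$ or $R_2$ (through tori of type $(u^{k}\pm1)(u\pm1)$), and those primes are adjacent to $s_1$ as well, which is a contradiction. The hypothesis $u>3$ is what guarantees that $R_1(u)$ and $R_2(u)$ contain odd primes (or at least that $u\mp1$ has odd divisors), so that these common neighbors actually exist; this is why $u>3$ is assumed.

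Once both $s_j$ are known to have $\varphi$ close to $\prk S$, I go through the cases of Lemma~\ref{l:adj_criteria}. In the linear/unitary case (i) and (ii), consider $s=r_i(\tau u)$ with $i\le m-2$. It lies in the same maximal torus $(u^{i}-\tau^i)(u^{m-i}-\tau^{m-i})$ as primes of index $m-i$ and as divisors of $u-\tau$. Only $i=m$ (when $m$ is odd) or $i=m-1$ (when $m$ is even) give a torus $\frac{u^m-\tau^m}{(u-\tau)(m,u-\tau)}$ or $\frac{u^{m-1}-\tau^{m-1}}{(m,u-\tau)}$ whose prime support can avoid everything. This identifies $s_1$. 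For the second prime, the extra divisibility hypotheses in (i) and (ii) exclude the situation where $(m,u-\tau)$ kills the common divisors of $u-\tau$. Under them, $s_2$ must avoid all neighbors of $s_1$, and this pins $s_2$ down to the listed sets; in the odd case with $3\mid m$ a prime of index $m-2$ survives because of the torus $(u^{m-2}-\tau^{m-2})(u^2-1)$ issue. Cases (iii)--(v) for the symplectic and orthogonal groups are handled the same way, using the $m_i$ columns of Table~\ref{tab:disjoint} and Lemma~\ref{l:disjoint} to confirm that the listed pairs really are the only survivors.

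The main obstacle is the bookkeeping in the even-$m$ symplectic and orthogonal case (v): adjacency there depends on $l/k$ being odd and on parities, and the exceptional $R_{2(m-2)}$ option for $O_{2m}^-$ has to be justified carefully. For each type I would first write out the set of $\varphi$-values not adjacent to $2$, and then intersect the complements of the neighborhoods directly.
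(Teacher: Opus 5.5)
\textbf{There is a genuine gap: your first step is false, and the rest of your plan depends on it.} You claim that neither $s_1$ nor $s_2$ can have small $\varphi$, because a large $s_2$ ``is still adjacent to primes in $R_1$ or $R_2$''. This fails for exactly the primes the lemma is about.

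Take $S=L_m^\tau(u)$ with $m$ an odd prime, e.g.\ $m=11$. By Lemma~\ref{l:disjoint}(ii), a prime of $R_m(\tau u)$ is adjacent only to divisors of $\frac{u^m-\tau^m}{(u-\tau)(m,u-\tau)}$, and all of these lie in $R_m(\tau u)$. So $s_1\in R_m(\tau u)$ has disjoint neighborhood from $s_2$ whenever $s_2$ is the characteristic of $S$ or any prime of $R_1(\tau u)$. The same happens with $R_{16}(u)$ in $S_{16}(u)$, and in (i) when $u-\tau\mid m$ and $m-1$ is prime. This is precisely why (i), (ii) and (v) say nothing about $s_2$ without their extra hypotheses. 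So ``first show both primes are large, then intersect complements of neighborhoods'' cannot be carried out.

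A smaller error: $u>3$ only ensures $R_1(u),R_2(u)\neq\varnothing$, not that they contain odd primes. For instance $R_1(5)=\{2\}$ and $R_2(7)=\{2\}$, and this is where the exception $\tau u=+5$ in (iv) comes from.

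\textbf{The missing idea is a hub argument, which avoids any counting.}

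\emph{First prime.} Find one prime $r$ adjacent to every prime outside a single set $R_k$. Since $s_1,s_2$ are nonadjacent and have no common neighbor, one of them must lie in $R_k$; if $r$ itself were one of them, it would be adjacent to the other. The hubs are:
\begin{itemize}
\item $r_2(\tau u)$, with $k$ the odd one of $m,m-1$, in (i)--(ii);
\item $r_1(u)$ and $r_2(u)$, for $R_{2m}(u)$ and $R_m(u)$ respectively, in (iii);
\item $r_2(u)$, for $R_m(\tau u)$, in (iv);
\item any $r\in\pi(u^2-1)$, for $R_{2m}(u)$, in (v).
\end{itemize}

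\emph{Second prime.} Use a second hub that is adjacent to $s_1$ and to everything outside the target set for $s_2$; then $s_2$ must lie in that set.
\begin{itemize}
\item In (i), take $r\in\pi(u-\tau)$ with $(u-\tau)_r>(m)_r$. Such $r$ exists exactly when $u-\tau\nmid m$, and it is adjacent to everything outside $R_m(\tau u)$.
\item In (ii), take $r_3(\tau u)$. It is adjacent to $s_1\in R_m(\tau u)$ because $3\mid m$, and to everything outside $R_{m-1}(\tau u)\cup R_{m-2}(\tau u)$.
\item In (iv), take a suitable $r\in R_1(\tau u)$, with care when $R_1(\tau u)=\{2\}$.
\item In (v), take $r_4(u)$ when $(m)_2=2$.
\end{itemize}
Your remark about the torus $(u^{m-2}-\tau^{m-2})(u^2-1)$ does not supply this step.
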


\begin{proof}
Observe that $R_1(u)\neq \varnothing$ and $R_2(u)\neq \varnothing$ in all items except (v) by Lemma \ref{l:zsigmondy} and the assumption $u>3$.

(i), (ii) Let $S=L_m^\tau(u)$. Then $m\geq 9$.  Set $k=m-1$ if $m$ is even and $k=m$ if $m$ is odd. Every $r_2(\tau u)$ is adjacent to every prime in $\pi(S)\setminus R_{k}(\tau u)$, so we may assume that $s_1\in R_k(\tau u)$. 

If $m$ is even and  $u-\tau$ does not divide $m$, then there is $r\in \pi(u-\tau)$ such that $(u-\tau)_r>(m)_r$. Then by \cite[Propositions 4.1 and 4.2]{05VasVd.t}, it follows that $r$ is adjacent to every prime in $\pi(S)\setminus R_m(\tau u)$. Hence $s_2\in R_m(\tau u)$.

If $m$ is odd and divisible by $3$, then $s_1$ is adjacent to $r_3(\tau u)$, which in turn adjacent to every $s$ not lying in $R_{m-1}(\tau u)\cup R_{m-2}(\tau u)$.

(iii) Here $m\geq 5$.  By \cite[Table 6]{05VasVd.t} and \cite[Propositions 2.4, 2.5]{11VasVd.t}, it follows that $r_1(u)$ is adjacent to every $s\in \pi(S)\setminus R_{2m}(u)$, while $r_2(u)$ is adjacent to every $s\in \pi(S)\setminus R_{m}(u)$. 

(iv) Here $m\geq 7$. Arguing as in (iii), we see that $r_2(u)$ is adjacent to every $s\in \pi(S)\setminus R_{m}(\tau u)$, so $s_1\in R_m(\tau u)$. Suppose that either $R_1(\tau u)\neq \{2\}$,  or $R_1(\tau u)=\{2\}$ and $u-\tau\not\equiv 4\pmod 8$. Then there is $r\in R_1(\tau u)$ adjacent to every $s\in \pi(S)\setminus R_{2(m-1)}(u)$, which yields $s_2\in R_{2(m-1}(u)$. If $R_1(\tau u)=\{2\}$ and $u-\tau\equiv 4\pmod 8$, then $u-\tau=4$ and hence $\tau u=+5$.

(v)  Here $m\geq 5$. If $r\in\pi(u^2-1)$, then $r$ is adjacent to every $s\in\pi(S)\setminus R_{2m}(u)$, so $s_1\in R_{2m}(u)$. Suppose that $(m)_2=2$. Then $s_1$ is adjacent to $r_4(u)$. If $s_2$ is not as specified then $s_2$ and $r_4(u)$ are adjacent.
\end{proof}

\begin{lemma}
\label{l:orders}
Suppose that $L$ is a simple classical group over a field of order $q$. Then element orders of $L$ do not exceed $\frac{q}{q-1}q^l$, where $l$ is the Lie rank of $L$. 
\end{lemma}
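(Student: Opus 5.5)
The plan is to decompose an arbitrary element according to the Jordan decomposition and bound each part separately. Take $g\in L$. Lift $g$ to an element $x$ of the corresponding quasisimple or matrix group $\hat L$: $SL_n(q)$, $SU_n(q)$, $Sp_{2n}(q)$, $\Omega_{2n+1}(q)$ or $\Omega^\pm_{2n}(q)$. Then $|g|\leq |x|$. Write $x=x_sx_u$ with $x_s$ semisimple, $x_u$ unipotent and commuting with $x_s$, so that $|x|=|x_s|\cdot|x_u|$.

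First I bound the semisimple part. The element $x_s$ lies in a maximal torus $T$ of $\hat L$. Maximal tori of classical groups are products of cyclic groups of orders $q^{a_i}-\epsilon_i$ subject to a rank constraint, where $\sum a_i$ is at most the rank. For $SU_n(q)$ the orders are $q^{a_i}-(-1)^{a_i}$ with $\sum a_i=n$. The exponent of $T$ is at most the least common multiple of these numbers, and this is at most $\prod (q^{a_i}+1)$. Since unitary tori must be handled in terms of $q$ and not $q^2$, the rank $l$ should be taken as the Lie rank, so that for $L_n^\pm(q)$ we have $l=n-1$. Using $\sum a_i\leq l+1$ and the determinant condition, which kills one factor $q-\epsilon$, one checks the bound $|x_s|\leq (q^{l+1}-1)/(q-1)$ in the linear case. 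For the other types with $\sum a_i\leq l$ one gets $|x_s|\leq \prod(q^{a_i}+1)/\gcd$, which is at most $q^l+\dots$ for a single cycle. The key elementary inequality is
$$\prod_i (q^{a_i}+1)\leq \frac{q}{q-1}\,q^{\sum a_i}$$
up to the gcd corrections coming from the common factors $q\pm1$. This inequality holds because $\prod(1+q^{-a_i})\leq \prod_{j\geq1}(1+q^{-j})$, which is roughly $q/(q-1)$. More carefully, I would bound the least common multiple rather than the product: any two factors $q^{a}\pm1$ and $q^{b}\pm1$ share the factor $q-1$ or $2$, and this compensation is what yields the clean bound.

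Second, I combine the semisimple and unipotent parts. If $x_u\neq1$, then $x_s$ lies in the centralizer of a nontrivial unipotent element. That centralizer's reductive part has smaller rank, because a unipotent element of order $p^k$ needs a Jordan block of size greater than $p^{k-1}$, which uses up that much of the rank. So $|x_u|\leq p^{k}$ with a block of size $b> p^{k-1}$ consuming at least $b-1\geq p^{k-1}$ of the rank, roughly $\lceil\log_p b\rceil$ in the exponent. Meanwhile $x_s$ lives in a torus of rank at most $l-(b-1)$. Then $|x|\leq p^{k}\cdot\frac{q}{q-1}q^{l-b+1}$, and $p^k\leq p\,b\leq q^{b-1}$ except in tiny cases, which I handle directly.

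The main obstacle is bookkeeping across all types, especially the unitary and orthogonal tori and the gcd factors, so that the constant is exactly $q/(q-1)$. In practice I would cite Buturlakin's descriptions of $\mu(L)$ \cite{08But.t,10But.t}. These express every maximal element order as $p^k[q^{a_1}\pm1,\dots,q^{a_s}\pm1]$ divided by a gcd. I would then verify the inequality uniformly on these formulas, using the estimate $[q^{a_1}\pm1,\dots]\leq q^{\sum a_i}\prod(1+q^{-a_i})/(q-1)^{s-1}$. This estimate already gives the bound for $s\geq2$, while $s=1$ gives $q^{l}+1$ or $(q^{l+1}-1)/(q-1)$, both at most $\frac{q}{q-1}q^l$.
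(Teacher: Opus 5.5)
Your outline is not yet a proof, and two of its concrete steps fail. The paper gives no argument of its own here; it simply quotes \cite[Lemma~1.3]{09VasGrMaz.t}.

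First, bounding the order of a lift $x\in\hat L$ cannot work, because the inequality is false in $\hat L$. For $p\geq 5$ the element $-J_2\in SL_2(p)$ has order $2p>p^2/(p-1)$. In $Sp_4(3)$ the element $-J_4$, with $J_4$ a regular unipotent element of order $9$, has order $18>\frac{3}{2}\cdot 3^2$. The bound only holds after passing to $L_2(p)$ and $S_4(3)$, where these orders drop to $p$ and $9$. This is exactly where your step ``$x_s$ lives in a torus of rank at most $l-(b-1)$'' breaks down. The element $x_s$ acts on a Jordan block of $x_u$ by an eigenvalue (here $-1$, in general of order dividing some $q^d\pm1$), and this still contributes to $|x_s|$ even though the block has used up the rank. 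Also, in symplectic and orthogonal groups a block of size $b$ uses only about $b/2$ of the Lie rank, not $b-1$; a regular unipotent element of $Sp_{2n}(q)$ is a single block of size $2n$. So the central quotient has to be built into the argument, for instance by working directly with Buturlakin's description of $\mu(L)$ for the simple group \cite{08But.t,10But.t}, as you suggest at the end.

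Second, the semisimple estimates you propose are false.
\begin{itemize}
\item The inequality $\prod_i(q^{a_i}+1)\leq\frac{q}{q-1}q^{\sum a_i}$ fails once the $a_i$ repeat; for example $(3+1)^2=16>\frac{3}{2}\cdot 3^2$. Even for distinct $a_i$ it does not follow from $\prod_{j\geq1}(1+q^{-j})$, which by Euler's identity equals $\prod_{j\geq1}(1-q^{1-2j})^{-1}$ and hence strictly exceeds $\frac{q}{q-1}$.
\item The lcm estimate $[q^{a_1}\pm1,\dots,q^{a_s}\pm1]\leq q^{\sum a_i}\prod_i(1+q^{-a_i})/(q-1)^{s-1}$ is also false, since two such factors need not share $q-1$. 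Indeed $\gcd(q^2+1,q^4+1)=(2,q-1)$, so for $q=5$ one gets $[26,626]=8138$, while the right-hand side is $26\cdot 626/4=4069$.
\item For $q=2$ your estimate is just the product, which can exceed the target. For instance $3\cdot5\cdot9=135>2^7$ with $\sum a_i=6$, although the true lcm is $45$.
\end{itemize}
What is really needed is exact lcm bookkeeping. This uses $\gcd(q^a-1,q^b-1)=q^{(a,b)}-1$, and the fact that $\gcd(q^a+1,q^b+1)$ is $q^{(a,b)}+1$ or $(2,q-1)$ according to the parities of $a/(a,b)$ and $b/(a,b)$. It must be carried out over all of Buturlakin's families, including the mixed orders $p^k[\dots]$, where the rank left for the semisimple part must drop enough to absorb $p^k$. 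None of this is done in the proposal, and the inequalities you would rely on to do it are incorrect.
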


\begin{proof}
See \cite[Lemma~1.3]{09VasGrMaz.t}.
\end{proof}

\begin{table}[hbt]
   \caption{Bounds on the exponents of some classical groups} \label{tab:exp}
   \begin{tabular}{|c|c|c|c|c|c|}
\hline 
Type of $L$ & $(\alpha(L),\beta(L),\gamma(L))$ & Type of $L$ & $(\alpha(L),\beta(L),\gamma(L))$ \\ \hline
$L^\pm_{9}$ &  $(32, 86, 28)$ & $S_{10}$, $O_{11}$ & $(5, 20, 20)$  \\ 
$L^\pm_{10}$ &  $(6, 64, 32)$ & $S_{12}, O_{13}$, $O^-_{12}$ & $(4, 16, 24)$   \\ 
$L^\pm_{11}$ &  $(118, 143, 42)$ & $S_{14}, O_{15}$, $O^+_{16}$ & $(5, 29, 36)$ \\ 
$L^\pm_{12}$  & $(15, 157, 46)$ &  $S_{16}, O_{17}$, $O^-_{16}$ & $(10, 29, 44)$ \\ 
$L^\pm_{13}$ & $(370, 342, 58)$ &$O^{\pm}_{14}$ & $(7, 37, 30)$\\ 
$L^\pm_{14}$ & $(15, 247, 64)$ &  $O^{\pm}_{18}$ & $(10, 65, 50)$   \\ 
 & &    $O^+_{20}$ & $(8, 49, 56)$  \\
\hline

\end{tabular}
\end{table}

\begin{lemma}\label{l:exp}
Suppose that $L$ is a simple classical group
over a field of order $q$ and characteristic $p$ 
such that $5\leq t(L)\leq 7$.
Then $\frac{1}{\alpha(L)}\cdot p\cdot q^{\gamma(L)}\leq\exp(L)\leq\beta(L)\cdot p\cdot q^{\gamma(L)}$,
where numbers $\alpha(L), \beta(L)$, and $\gamma(L)$ are listed in Table~{\rm \ref{tab:exp}}.
\end{lemma}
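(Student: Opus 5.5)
The exponent $\exp(L)$ is the least common multiple of the element orders of $L$. For a simple classical group in characteristic $p$ it splits as $(\exp L)_p\cdot(\exp L)_{p'}$. I will write each factor explicitly, bound them separately, and then multiply, treating each row of Table~\ref{tab:exp} in turn.

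\emph{The $p$-part.} The $p$-part is the exponent of a Sylow $p$-subgroup. It equals $p^{c}$, where $p^c$ is the smallest power of $p$ that is at least the size of the largest Jordan block of a unipotent element in the natural module. For $L_n^\pm(q)$ this gives $p^{c}$ with $p^{c-1}<n\leq p^c$. For $S_{2n}(q)$, $O_{2n+1}(q)$, $O_{2n}^\pm(q)$ one uses the corresponding bound: $2n$, respectively $2n+1$ or $2n-1$, with the usual parity restrictions in odd characteristic. Hence in every case $p\leq (\exp L)_p\leq p\cdot \prk(L)$ roughly. It is at least $p$, and it is bounded above by a small constant times $p$, since the dimension is at most $20$. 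This explains the factor $p$ in the statement.

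\emph{The $p'$-part.} This is the least common multiple of the orders of the semisimple elements. By \cite{08But.t,10But.t}, up to division by a bounded factor (the centre, i.e.\ $(n,q-\varepsilon)$ or a power of $2$), it equals the least common multiple of the numbers $q^i-\varepsilon^i$ for $i\leq n$ in the linear and unitary case, and of the numbers $q^i\pm1$ for $i\leq n$ in the symplectic and orthogonal cases. Up to powers of small primes (essentially $2$ and primes dividing $i$), the least common multiple is $\prod_i \Phi_i(\pm q)$ over the relevant indices $i$. Therefore
\[
(\exp L)_{p'} \approx \prod_{i\in I(L)} \Phi_i(q),\qquad \gamma(L)=\sum_{i\in I(L)}\varphi(i),
\]
where $I(L)$ is the set of $i$ such that $R_i(q)$ meets $\pi(L)$.

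For example, for $L_9(q)$ the set $I$ is $\{1,\dots,9\}$ and $\sum\varphi(i)=28$. For $L_{10}(q)$ we get $28+4=32$. For $S_{10}(q)$ the set $I$ is $\{1,\dots,10\}\setminus\{7,9\}$ and the sum is $20$. These agree with the $\gamma$ column, which confirms the shape.

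Lemma~\ref{l:cyclotomic estimation}(iii) applies since all $i<105$, and gives $q^{\varphi(i)}/2<\Phi_i(q)<2q^{\varphi(i)}$ for each factor. Multiplying, this yields $q^{\gamma}/2^{|I|}<\prod\Phi_i<2^{|I|}q^\gamma$. The finer part (i) can be used for prime and prime-power $i$ to tighten the constants, using $q\geq2$ or $q\geq3$ as appropriate.

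\emph{Correction factors.} What remains is to control the discrepancy between the lcm and the product. The extra factors in the lcm are the $r$-parts with $r\mid i$: they come from $(q^{ri}-1)_r=(q^i-1)_r\cdot r$, and from the extra $2$'s in $q^i\pm1$. Dividing by the centre costs at most $(n,q-\varepsilon)$ or $2$. For the upper bound these contribute a bounded multiplier: the product of $r$ over prime powers $r^a\leq n$, times the $2$-power contributions. Combined with the bound on the $p$-part and the $2^{|I|}$ factor, this gives $\beta(L)$. For the lower bound we lose at most the $2^{|I|}$ factor, the centre divisor, and the gcd factors $(r,\Phi_{(i)_{r'}}(q))$ from \eqref{eq:ki}; this gives $\alpha(L)$.

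I expect the main obstacle to be this bookkeeping: getting the explicit constants in Table~\ref{tab:exp} rather than merely some constants. It requires care with the $2$-part and with the centre $(n,q-\varepsilon)$, which can be as large as $n$ (note the large $\alpha$ for $L_9$, $L_{11}$, $L_{13}$). It also requires using the sharper estimates of Lemma~\ref{l:cyclotomic estimation}(i),(ii) case by case instead of the crude factor $2$ per cyclotomic factor.
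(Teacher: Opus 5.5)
\textbf{There is a genuine gap: the proposal never derives the constants $\alpha(L)$ and $\beta(L)$, which are the whole content of the lemma.}

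The paper does not prove this lemma; it quotes \cite[Proposition~2.15]{26Sta.t}. Your split of $\exp(L)$ into a $p$-part (largest unipotent order) and a $p'$-part (lcm of semisimple orders, compared with $\prod_{i\in I(L)}\Phi_i(\pm q)$) is the natural route. Your formula $\gamma(L)=\sum_{i\in I(L)}\varphi(i)$ is correct for every row of Table~\ref{tab:exp}.

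The constants matter because they are used numerically later, e.g.\ to get $q\le 7$ from $q^{36}<(24/5)^6\cdot 5\cdot 43\,q^{29}$. Yet ``this gives $\beta(L)$'' and ``this gives $\alpha(L)$'' are only assertions, and the scheme you describe cannot produce them. Take $L_9^\pm$:
\begin{itemize}
\item Lemma~\ref{l:cyclotomic estimation}(iii) costs a factor $2^9$.
\item The $p$-part ratio can be $8$, since $(\exp L_9(2^k))_2=16$.
\item Your correction multiplier, ``the product of $r$ over prime powers $r^a\le n$'', is $[1,2,\dots,9]=2520$.
\end{itemize}
Together this gives a constant of order $10^7$ instead of $\beta=86$.

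The correction multiplier rests on a misdiagnosis. For every integer $a$ with $|a|\ge 2$,
\[
[a^i-1 : 1\le i\le n]=\prod_{i=1}^n\Phi_i(a)
\]
exactly. To see this, compare $r$-parts: the extra factor $r$ in $(a^{ri}-1)_r$ is precisely the $r$-part of $\Phi_{ri}(a)$, so it is already in the product, and the same holds at $r=2$. Consequences:
\begin{itemize}
\item For $L_n^\varepsilon(q)$ the upper bound needs no correction at all: $(\exp L)_{p'}\le\prod_{i\le n}|\Phi_i(\varepsilon q)|$.
\item For symplectic and orthogonal groups the product over $I(L)$ only overshoots. When $q$ is odd, its $2$-part is twice that of the lcm of the relevant $q^i\pm1$.
\item All the genuine losses belong to the lower bound: the centre, which lcms are actually element orders of $L$, and these $2$-parts.
\end{itemize}

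Your $p$-part estimate $(\exp L)_p\le p\cdot\prk(L)$ is also false, not even roughly right, for the symplectic and orthogonal rows. For example, $(\exp S_{10}(3^k))_3=27=9p$, because a regular unipotent element has a single Jordan block of size $10$.

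To actually reach the table you would need to do the following row by row:
\begin{itemize}
\item Evaluate $\prod_{i\in I(L)}|\Phi_i(\pm q)|/q^{\gamma(L)}$ exactly at the small $q$ where it is largest. For instance it is about $5.73$ for $L_9(2)$, so $\exp(L_9(2))\approx 46\cdot p\,q^{28}$.
\item Combine that value with the $p$-part for the same $q$, rather than multiplying separate worst cases.
\item Control large $q$ with Lemma~\ref{l:cyclotomic estimation}(i),(ii).
\item For $\alpha(L)$, use Buturlakin's description of $\mu(L)$ to determine exactly what is lost.
\end{itemize}
None of this is in the proposal.
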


\begin{proof}
See \cite[Proposition 2.15]{26Sta.t}.
\end{proof}

\section{The structure of groups isospectral to classical groups}\label{s:structure}

In this section, we collect results about the structure of finite groups isospectral to classical groups in odd characteristic.

\begin{lemma}\label{l:reduction}
Suppose that $L$ is a simple classical group over a field of odd characteristic $p$, $t(L)\geq 5$ and $\omega(G)=\omega(L)$. If $G$ is not an almost simple group with socle isomorphic to $L$, then $S\leq \overline G=G/K\leq \Aut S$, where $K$ is the solvable radical of $G$ and $S$ is a simple classical group in characteristic $v\neq p$. Furthermore, $K$ is nilpotent and the following hold:
\begin{enumerate}
     \item if $\rho$ is a coclique in $GK(G)$ with $|\rho|\geq3$, then at most one prime in $\rho$ divides $|K|\cdot|\overline{G}/S|$;
    \item if $r\in\pi(G)$ is not adjacent to $2$ in $GK(G)$, then $r$ does not divide  $|K|\cdot|\overline{G}/S|$.
\end{enumerate}
\end{lemma}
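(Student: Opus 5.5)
This lemma collects known structural results, so I would assemble it from the literature rather than argue from scratch. The plan has three steps.

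The first step uses the general reduction theorem for groups isospectral to simple groups with $t(L)\geq 3$ (Vasil'ev's theorem, in the refined form of Vasil'ev--Gorshkov). Since $t(L)\geq5$ and $t(2,L)\geq 2$, it yields that if $\omega(G)=\omega(L)$, then $G$ has a unique nonabelian composition factor $S$ with $S\leq G/K\leq\Aut S$, where $K$ is the solvable radical. It also gives that $t(S)\geq t(G)-1\geq 4$ and that every coclique of size at least $3$ in $GK(G)$ has at most one prime dividing $|K|\cdot|\overline G/S|$. This is item (i). The same theorem says that any $r$ nonadjacent to $2$ does not divide $|K|\cdot|\overline G/S|$, which is item (ii). Nilpotency of $K$ is also part of this package (the results on recognition of classical groups, e.g. \cite[Section~2]{15Vas} or \cite[Section~3]{23Survey}): a Frobenius or $2$-Frobenius argument on a large prime acting on $K$ produces an element order not in $\omega(L)$.

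The second step identifies $S$. From $t(S)\geq 4$ and the fact that large primes of $L$ divide $|S|$, the known analysis rules out sporadic and alternating $S$ in this range, using the results on the existence of large cocliques and the exponent and order bounds. I would then invoke \cite[Section~2]{15Vas} together with the exceptional-group case handled in \cite{14VasSt.t} to exclude exceptional groups of Lie type. This leaves $S$ a classical group of some characteristic $v$.

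The third step treats the case $v=p$. The papers \cite{17Sta,21Sta.t,26Sta.t} show that if $S$ is of Lie type in the same characteristic $p$, then $S\simeq L$ and $K=1$; the argument compares primitive prime divisors and $t$. This yields the almost simple conclusion. Hence, if $G$ is not almost simple with socle $L$, then $v\neq p$.

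I expect the main obstacle to be the careful citation of exactly which earlier results cover odd characteristic with $t(L)=5$, especially the same-characteristic case. Some of those statements may assume $t(L)\geq 6$. In that event I would redo the same-characteristic comparison by hand, using Lemma~\ref{l:tL} and Lemma~\ref{l:orders} to match ranks and field sizes.
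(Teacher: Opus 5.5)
Your skeleton matches the paper's, whose proof is also pure citation. There, $t(2,L)\geq 2$ by \cite[Theorem 7.1]{05VasVd.t}, so Vasil'ev's theorem \cite{05Vas.t} gives $S\leq G/K\leq \Aut S$ together with (i) and (ii). Nilpotency of $K$ is quoted from \cite{20YanGrVas}. Finally, \cite[Theorems 3.6 and 3.8]{23Survey} give that $S$ is classical and that $v=p$ forces $S\simeq L$ and $K=1$. Your Step~1 is exactly the first of these.

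The trouble is in what you attach to the other ingredients. Nilpotency of $K$ is not a by-product of the reduction theorem. It is the subject of a separate paper, \cite{20YanGrVas}, and your one-line ``Frobenius or $2$-Frobenius argument on a large prime'' is not a proof of it; replace it by that citation. Likewise, \cite{14VasSt.t} treats the case where $L$ itself is exceptional, so it does not exclude an exceptional composition factor $S$ when $L$ is classical. That exclusion, together with the alternating and sporadic cases you only gesture at, is what \cite[Theorems 3.6 and 3.8]{23Survey} provide.

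The more substantive gap is your fallback for the same-characteristic case. Matching ranks and field sizes via Lemma~\ref{l:tL} and Lemma~\ref{l:orders} could at best give $S\simeq L$; it says nothing about $K$. But the statement also needs $K=1$. A group with $S\simeq L$ and $K\neq 1$ has $v=p$ and is not almost simple, so it would contradict the conclusion of the lemma.

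Showing $K=1$ means proving that a nontrivial nilpotent radical under an almost simple group with socle $L$ always produces an element order outside $\omega(L)$. This rests on fixed-point properties of elements of $L$ acting on modules, including modules in the defining characteristic $p$. That is input of a different kind from comparing $t$, $E(L)$ and bounds on element orders. It is exactly what the survey results cited in the paper package, and the paper applies them directly in the range $t(L)\geq 5$ considered here. So once the citation is correct, no hand-made substitute is needed, and your worry about a $t(L)\geq 6$ restriction does not arise.
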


\begin{proof}
Since $t(2,L)\geq 2$ by \cite[Theorem 7.1]{05VasVd.t}, the assertion of the lemma follows from \cite{05Vas.t}, except  nilpotency of $K$ and restrictions on the type of $S$. The group $K$ is nilpotent by \cite{20YanGrVas}. By \cite[Theorems 3.8 and 3.6]{23Survey}, the group $S$ is classical and if the defining characteristic of $S$ is equal to $p$, then $S\simeq L$ and $K=1$. 
\end{proof}

\begin{lemma}\label{l:rest on t(S)}
Let $L$ and $S$ be as in Lemma {\rm \ref{l:reduction}}. 
\begin{enumerate}
\item If $L$ is linear or unitary with $t(L) = 5$ or $L$ is  symplectic or orthogonal with $t(L)\geq 7$, then $0\leq t(S)-t(L)\leq 1$.
\item If $L$ is symplectic or orthogonal with $5\leq t(L)\leq 6$, then $0\leq t(S)-t(L)\leq 2$.
\end{enumerate}
\end{lemma}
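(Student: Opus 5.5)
We compare $t(S)$ with $t(G)=t(L)$ from both sides, using the structure given by Lemma~\ref{l:reduction}. Throughout, $K$ is the solvable radical of $G$ and $\overline G=G/K$ with $S\leq\overline G\leq\Aut S$.

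\textbf{Lower bound $t(S)\geq t(L)$.} Let $\rho$ be a coclique of greatest size in $GK(L)=GK(G)$, so $|\rho|=t(L)\geq5$. By Lemma~\ref{l:reduction}(i), at most one prime of $\rho$ divides $|K|\cdot|\overline G/S|$, and hence at least $t(L)-1$ primes of $\rho$ lie in $\pi(S)$ and are pairwise nonadjacent in $GK(S)$, because $\omega(S)\subseteq\omega(G)$. This yields only $t(S)\geq t(L)-1$. To gain the missing prime I would use the two primes $s_1\in R_1(L)$ and $s_2\in R_2(L)$ from Lemma~\ref{l:disjoint}, which have disjoint neighbourhoods. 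One of them, say $s_1$, is never adjacent to $2$, since $m_1(L)$ is odd or the $2$-part is absent. By Lemma~\ref{l:reduction}(ii), $s_1$ does not divide $|K|\cdot|\overline G/S|$.

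A standard argument then gives $\pi(K)\cup\pi(\overline G/S)\subseteq\pi(S)$ together with adjacency control. A prime $r$ dividing $|K|$ is adjacent in $GK(G)$ to $s_1$, by the Hall--Higman type argument: an element of order $s_1$ acts on the nilpotent group $K$, and either centralizes a Sylow $r$-subgroup or is forced into a Frobenius-type action, which is excluded by known lemmas. The same holds for primes dividing $|\overline G/S|$ through field and graph automorphisms. Therefore the prime of $\rho$ outside $\pi(S)$, if any, can be replaced by a suitable large prime of $S$. Equivalently, $t(G)\leq t(S)+1$ with equality excluded because the exceptional prime is adjacent to something in $\rho$. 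I would cite the corresponding result \cite[Section~3]{23Survey}, which gives $t(S)\geq t(G)-1$ in general and $t(S)\geq t(G)$ under the condition $t(2,G)\geq2$ and $t(G)\geq5$.

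\textbf{Upper bound.} Primes of a coclique of $GK(S)$ that are nonadjacent in $GK(S)$ may become adjacent in $GK(G)$, but only through elements of $G$ outside $S$. So the real tool is the exponent estimate of Lemma~\ref{l:exp} together with Lemma~\ref{l:orders}. Since $\omega(S)\subseteq\omega(L)$, we have $\exp(S)\mid\exp(L)$ and every element order of $S$ is at most $\frac{q}{q-1}q^{l}$. Write $S$ over a field of order $u$ with $t(S)=t(L)+d$, where $d\geq2$ (respectively $d\geq3$ in item (ii)).

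Using Table~\ref{tab:tL}, the rank of $S$ then exceeds that of $L$ by a definite amount. I would compare the size of the primitive divisors $k_i(u)$ for $i\in E(S)$, bounded below via Lemma~\ref{l:cyclotomic estimation}, with the corresponding quantities for $L$. Large primes of $S$ must be large or near-large in $L$, so $\prod_{i\in E(S)}k_i(u)$ divides $\exp(L)$ restricted to primes in $\bigcup_{j\in J(L)}R_j(q)$ and a few small indices. The point is that $\prod_{i\in E(S)}k_i(u)$ is roughly $u^{\sum\varphi(i)}$, and this sum grows quadratically in $\prk S$. Meanwhile the maximal element order in $S$, roughly $u^{\prk S}$, is bounded by roughly $q^{\prk L}$, while $\exp(L)\leq\beta(L)pq^{\gamma(L)}$.

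These two constraints are compared as follows. The first constraint gives $u^{\prk S}\lesssim q^{\prk L}$. The second gives $u^{\sum\varphi(i)}\lesssim q^{\gamma(L)}$ modulo the prime-matching, and this is incompatible once $\prk S$ is too large. The finitely many residual pairs of types I would rule out one by one by checking the disjoint-neighbourhood primes of Lemma~\ref{l:unique} against Lemma~\ref{l:disjoint}: $s_1,s_2$ for $L$ must correspond to the prescribed $R_i(\tau u)$ for $S$.

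\textbf{Main obstacle.} I expect the hardest step to be this upper bound, specifically making the prime-matching rigorous. Showing that a product of primitive-divisor parts of $|S|$ must land inside a controlled part of $\exp(L)$ requires the adjacency criteria of Lemma~\ref{l:adj_criteria} to show that a coclique of $S$ of size $t(L)+2$ forces the corresponding primes into $J(L)$-indices. The distinction between the two items then comes from the fact that symplectic and orthogonal $L$ with small $t$ have $|J(L)\setminus E(L)|$ larger, which leaves room for $d=2$.
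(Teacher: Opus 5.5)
The paper gives no argument for this lemma; it is quoted from \cite[Theorem~3]{26Sta.t}. Your from-scratch proposal has genuine gaps in both directions.

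\textbf{Lower bound.} Lemma~\ref{l:reduction}(i) gives only $t(S)\geq t(L)-1$. Everything beyond that rests on your claim that every prime dividing $|K|\cdot|\overline G/S|$ is adjacent to $s_1$. You support it only with ``Hall--Higman type argument'', ``excluded by known lemmas'' and ``the same holds \dots\ through field and graph automorphisms''. That claim is the whole content of the step, and it is not proved. For $\overline G/S$ it is false as a general group-theoretic fact. Take $\overline G=S\rtimes\langle\phi\rangle$ with $\phi$ a field automorphism of prime order $r\notin\pi(S)$. Then $r$ is adjacent only to primes dividing the order of a subfield subgroup, so it can be added to a suitable maximum coclique of $GK(S)$. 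Thus $t(S)=t(G)-1$ does occur, and no general citation yields $t(S)\geq t(G)$ from $t(2,G)\geq2$ and $t(G)\geq5$ alone. You also never arrange $s_1\in\rho$.

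The missing prime has to come from $\omega(G)=\omega(L)$:
\begin{itemize}
\item primes of $K$ satisfy $t(r,L)\leq 3<t(L)$ by Lemma~\ref{l:r in K};
\item large primes of $L$ (outside $R_6(\varepsilon q)$ in the linear and unitary case) do not divide $|\overline G/S|$ and satisfy $t(r,S)\geq t(r,L)$, by \cite[Lemma~4.5]{26Sta.t}.
\end{itemize}
Together these give Lemma~\ref{l:t(S)>=t(L)}(ii), and applying it to a single large prime gives $t(S)\geq t(L)$ at once.

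\textbf{Upper bound.} This part is a plan rather than a proof, and the plan does not work as stated.
\begin{itemize}
\item Your matching ``large primes of $S$ are large or near-large in $L$'' is unjustified and runs the wrong way. $GK(G)$ may have edges absent from $GK(S)$, which is exactly why $t(S)>t(L)$ is conceivable. What is available is the reverse statement, $t(r,S)\geq t(r,L)$ for suitable primes $r$ of $L$.
\item Both of your constraints bound $u$ from above in terms of $q$: $u^{\prk S}\lesssim q^{\prk L}$ via Lemma~\ref{l:orders} applied to $L$, and the exponent comparison. Two upper bounds can never be incompatible. You need a lower bound on $u$, which comes from forcing some $k_i(q)$ with $i\in J(L)$ into $\omega(S)$ and applying Lemma~\ref{l:orders} to $S$.
\item Even with that lower bound, comparing leading exponents for symplectic $L$ and linear $S$ allows $\prk S$ up to about $2\prk L$. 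That leaves $t(S)-t(L)$ of order $t(L)$, not at most $1$ or $2$.
\item Lemma~\ref{l:exp} covers only $5\leq t(L)\leq7$, while item (i) concerns symplectic and orthogonal $L$ of arbitrary rank.
\end{itemize}
Where the paper itself runs an argument of your type (Lemmas~\ref{l:68} and~\ref{l:57}), it starts from the bound $t(S)\leq t(L)+2$ supplied by this very lemma. It then uses the nontrivial inequality $u^{3d}<q^2$ of Lemma~\ref{l:d-estim} as the upper bound on $u$.

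Your explanation of the difference between (i) and (ii) via $J(L)\setminus E(L)$ is also off. For $S_{10}(q)$ and $O_{11}(q)$ one has $J(L)=E(L)$. And by Proposition~\ref{p:2}, $t(S)-t(L)\leq1$ holds in case (ii) as well.
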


\begin{proof}
See \cite[Theorem 3]{26Sta.t}.
\end{proof}

In the rest of this section, we assume that $L$ is a finite simple classical group over a field of order $q$ and characteristic $p\neq 2$ with $5\leq t(L)\leq 13$,  $G$ is a finite group such that $\omega(G)=\omega(L)$ but $G$ is not an almost simple group with socle $L$. By Lemma \ref{l:reduction}, we have 
\begin{equation}
S\leq \overline G=G/K\leq \Aut S,
\end{equation} where $K$ is the solvable radical of $G$ and  $S$ is a simple classical group over a field of order $u$ and characteristic $v\neq p$. Also in this section $n=\prk(L)$ and $m=\prk(S)$, if not stated otherwise.

\begin{lemma}\label{l:r in K}
If $r\in\pi(K)\setminus\{v\}$, then $t(r,L)=2$ and $r$ divides $p(q^2-1)$. If $v\in\pi(K)$, then $t(v,L)\leq 3$.
\end{lemma}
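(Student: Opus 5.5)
We argue via the standard ``coclique-lifting'' argument for nilpotent normal subgroups. By Lemma~\ref{l:reduction}, $K$ is nilpotent, so for $r\in\pi(K)$ the Sylow $r$-subgroup $R$ of $K$ is normal in $G$. The key observation is that if $s\in\pi(G)$ is not adjacent to $r$ in $GK(G)=GK(L)$, then an element $g$ of order $s$ acts fixed-point-freely on $R$ (otherwise $rs\in\omega(G)$). We then take a set $\sigma$ of primes pairwise nonadjacent and nonadjacent to $r$, and try to find a Frobenius-type subgroup $R\rtimes\langle g_1\rangle$ or, more effectively, a subgroup $H\le G$ of order divisible by two primes of $\sigma$ acting on $R$ (or on a suitable $H$-invariant section $V=R/\Phi(R)$ viewed as a module over a field of characteristic $r\neq s$). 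By the Hall--Higman/coprime-action argument, an element of order $s$ fixed-point-free on $V$ together with another element $h$ of order $s'$ nonadjacent to both forces products $rs$ or $rs'$ in $\omega(G)$, unless the subgroup generated has a very restricted structure. Concretely, we use: if a subgroup $A\le \overline G$ is a Frobenius group with kernel of order $s$ and complement of order $s'$ (or a simple section containing such), then acting on an $r$-group in characteristic $r\neq s$ it yields $rs'\in\omega(G)$ or $rs\in\omega(G)$. Hence $r$ is adjacent to at least one of any such pair, and this bounds $t(r,L)$.

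The steps are as follows. First, for $r\in\pi(K)\setminus\{v\}$, we find inside $S$ (a classical group in characteristic $v$, with $t(S)\in\{t(L),t(L)+1,t(L)+2\}$ by Lemma~\ref{l:rest on t(S)}) Frobenius subgroups $v^a\rtimes C_s$ for many primes $s$, using Borel/parabolic subgroups of $S$. Since $v\neq r$ and $r$ does not divide these orders appropriately, the lemma on Frobenius groups acting on $r$-groups gives $rs\in\omega(G)$ for all such $s$, and also $rv\in\omega(G)$. This shows that $r$ is adjacent to almost every prime of $\pi(S)$; in particular $t(r,L)\le 2$. Second, since $r$ lies in some maximal coclique of size at least $2$ (take $r$ together with a prime from $R_1(L)$ or $R_2(L)$ in Table~\ref{tab:disjoint} not adjacent to it, since $s_1,s_2$ have disjoint neighborhoods), we get $t(r,L)=2$. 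Third, we compare with the adjacency criteria for $L$: Lemmas~\ref{l:adj_criteria}, \ref{l:adj_criteria1} and \ref{l:tri} show that any $r$ with $\varphi(r,L)\ge 3$ has $t(r,L)\ge 3$ (by Lemma~\ref{l:tri}(ii), or since then a coclique of primes $r_i$ with large $\varphi$ avoiding $r$ exists, using $n\ge 9$). Thus $e(r,q)\in\{1,2\}$ or $r=p$, i.e.\ $r$ divides $p(q^2-1)$.

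For $v\in\pi(K)$, we apply the same argument with $r=v$, now using Frobenius subgroups of $S$ whose kernels have order coprime to $v$ (for example, tori normalized by elements of coprime order), together with Lemma~\ref{l:reduction}(i): at most one prime of any coclique of size at least $3$ divides $|K|$. The claim $t(v,L)\le 3$ follows because a $\{v\}$-coclique of size $4$ would contain three primes, each of which is realized by a Frobenius subgroup acting on $O_v(K)$ and forces adjacency to $v$. The main obstacle is producing, uniformly for all $S$ allowed by Lemma~\ref{l:rest on t(S)}, enough Frobenius subgroups in $S$ to make $r$ adjacent to all but at most one prime class. We would try first the subgroups $u^{k}\rtimes C_{r_k(u)}$ in maximal parabolics and Singer-type $C_{r_m}\rtimes C_m$.
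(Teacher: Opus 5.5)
\textbf{There is a genuine gap.} The paper gives no argument for this lemma. It quotes \cite[Lemma 4.1]{26Sta.t} for the statement about $r\neq v$ and \cite[Lemma 1.7]{26GreRod_arxiv} for $t(v,L)\leq 3$. Your tool is the right kind: coprime action of preimages of Frobenius subgroups of $S$ on the normal Sylow $r$-subgroup $R$ of the nilpotent group $K$. But the step that carries the proof is missing.

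The inference ``$r$ is adjacent to almost every prime of $\pi(S)$; in particular $t(r,L)\le 2$'' does not follow. Suppose $\{r,s_1,s_2\}$ were a coclique. Lemma~\ref{l:reduction}(i), which you only invoke in the $v$-part, puts $s_1,s_2$ in $\pi(S)$ with $s_1s_2\notin\omega(S)$. So you must prove that the primes $s\in\pi(S)$ for which you cannot force $rs\in\omega(G)$ form a clique of $GK(S)$. Your Frobenius subgroups do not deliver this:
\begin{itemize}
\item No proper parabolic subgroup of $L_m(u)$ contains elements of order $r_m(u)$, and none of $S_{2m}(u)$ contains elements of order $r_{2m}(u)$.
\item In $C_{r_m}\rtimes C_m$ the prime $r_m$ lies in the kernel, so you only learn adjacency to divisors of $m$.
\item $v$ lies in the kernel of $v^a\rtimes C_s$, so your claim $rv\in\omega(G)$ is unsupported.
\end{itemize}
This is not cosmetic. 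Since $v$ and $r_m(u)$ are nonadjacent in $GK(L_m(u))$, without $rv\in\omega(G)$ nothing rules out the coclique $\{r,v,r_m(u)\}$.

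A tool that does give $rv\in\omega(G)$ is the coprime-action identity $R=\langle C_R(a)\mid 1\neq a\in E\rangle$. More generally it gives $rs\in\omega(G)$ for every $s\neq r$ such that a Sylow $s$-subgroup of the preimage of $S$ contains a noncyclic elementary abelian subgroup $E$. The case where a Frobenius kernel centralizes $R$ must also be treated separately. Even with all this, you still need a type-by-type check, over all $S$ permitted by Lemma~\ref{l:rest on t(S)}, that the leftover primes form a single clique. You leave exactly this as the ``main obstacle'', and it is the substance of the lemma.

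The $v$-part is not yet an argument either. A coclique $\{v,s_1,s_2,s_3\}$ has all $s_i\in\pi(S)$ by Lemma~\ref{l:reduction}(i), so it is a $\{v\}$-coclique of $GK(S)$. By Lemma~\ref{l:adj_criteria1}, its odd primes $s_i$ satisfy $\varphi(s_i,S)\geq m-1$, so they are Singer-type primes. In the torus normalizers containing them they sit in the Frobenius kernel, not in a complement. Hence the assertion that each $s_i$ ``is realized by a Frobenius subgroup acting on $O_v(K)$'' has no basis. You would have to identify the groups $S$ with $t(v,S)\geq 4$ and produce an adjacency of $v$ with some $s_i$ by a specific argument.

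Finally, there is a smaller gap in your Step~3. For symplectic and orthogonal $L$, any $r\in R_4(q)$ has $\varphi(r,L)=\eta(4)=2$, yet $r$ does not divide $p(q^2-1)$. So ``$\varphi(r,L)\geq 3\Rightarrow t(r,L)\geq 3$'' is not enough, and you must check $t(r_4(q),L)\geq 3$ directly. For instance, $\{r_4(q),r_5(q),r_{10}(q)\}$ is a coclique in $GK(L)$ for $L=S_{10}(q)$, $S_{12}(q)$, $O_{12}^-(q)$. Also, Lemma~\ref{l:tri} applies only to linear and unitary groups.
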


\begin{proof}
See \cite[Lemma 4.1]{26Sta.t} and \cite[Lemma 1.7]{26GreRod_arxiv}.
\end{proof}

\begin{lemma}\label{l:G/S-linear}
Let $L=L^\varepsilon_n(q)$, where $n\in\{9,10\}$ and $\varepsilon\in\{+,-\}$. 
If $r\in R_i(\varepsilon{q})\cap \pi(\overline{G}/S)$ and $4\leq i\leq n$, then  one of the following holds:
\begin{enumerate}
\item $i=4$ and $r=5$;
\item $i=6$, $r=31$, $n=9$, and $(q-\varepsilon,5)=5$.
\end{enumerate}
\end{lemma}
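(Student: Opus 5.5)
The plan is to combine the structure of $\operatorname{Out} S$ with the adjacency information for $GK(L)=GK(G)$.

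First I pin down what an element of order $r$ in $\overline G\setminus S$ can be. Since $4\le i\le n$ and $r\in R_i(\varepsilon q)$, we have $i\mid r-1$, so $r\ge5$. In particular $r$ is odd, and it cannot divide the order of a graph automorphism of $S$ (triality does not occur for the classical groups with $5\le t(S)\le 6$ allowed by Lemma~\ref{l:rest on t(S)}). Hence $r$ divides either the order $d$ of the group of diagonal automorphisms or $f$, where $u=v^f$. By standard results on automorphisms of prime order, $\overline G$ then contains an element $x$ of order $r$ which either is a field automorphism or induces a diagonal automorphism. In the field case, $C_S(x)$ contains a group $S_0$ of the same type as $S$ over the subfield of order $u_0=u^{1/r}$. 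In the diagonal case, $r\mid (m,u-\tau)$ for $S=L_m^\tau(u)$ (or $r\mid(2,u-1)$ in the other types, which is impossible since $r$ is odd). Then $x$ centralizes a large torus-like subgroup of $S$, and $r$ is adjacent to every prime of $S$ except primitive divisors $r_m(\tau u)$ or $r_{m-1}(\tau u)$.

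Second, I compare neighbourhoods. In either case $r$ is adjacent in $GK(\overline G)\subseteq GK(G)=GK(L)$ to all primes of a large subset of $\pi(S)$. In the field case this subset is $\pi(S_0)$, which includes primitive divisors $r_j(u_0)$ for all $j$ up to about $2\prk(S)$, since $m\ge 9$ or $t(S)\ge5$ by Lemma~\ref{l:rest on t(S)}. On the other side, Lemma~\ref{l:adj_criteria}(i) says that for $i>n/2$ the neighbours of $r$ in $GK(L)$ lie in primitive divisors of $q^j-1$ with $j\le n-i$ or $i\mid j$, together with $p$. This gives at most two or three classes. Moreover, by Lemma~\ref{l:tri} and Lemma~\ref{l:reduction}(i), $r$ lies in a coclique $\rho$ of size $t(L)=5$ (resp.\ $t(r,L)=i$ for $i=4$ when $n=9,10$) whose other members would all be adjacent to $r$. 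I expect this to contradict $\pi(S_0)$ containing a coclique of size at least $3$ of primes that are nonadjacent in $GK(L)$ to elements of $\rho$. Concretely, I plan to invoke Lemma~\ref{l:reduction}(i) applied to $\rho$: $r$ divides $|\overline G/S|$, so no other prime of $\rho$ divides $|K||\overline G/S|$, and those primes must be realized in $S$ and hence be adjacent or nonadjacent to $r$ consistently. Counting cocliques then forces $i$ to be small, i.e.\ $i\le n/2$. For $i\in\{5\}$ with $n=10$ and similar borderline values I will use Lemma~\ref{l:adj_criteria1} and Lemma~\ref{l:tri} directly.

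Third, for the remaining small $i$ (essentially $i=4$ and $i=6$, and $i=5$ when $n=10$), the combinatorial argument is too weak, and I will use arithmetic instead. In the field case $u\ge v^r\ge 2^r$, so the exponent of $S$ is at least roughly $2^{r\cdot c\cdot m}$. This must be at most $\exp(L)\le\beta(L)pq^{\gamma(L)}$ by Lemma~\ref{l:exp}, while $r\mid\Phi_i(\varepsilon q)$, which bounds $q$ from below in terms of $r$ only weakly. Instead I would exploit that $r\cdot k\in\omega(G)$ for $k\in\omega(S_0)$, and also $r\cdot(\text{large element order of } S_0)\le$ the maximal element order of $L$ by Lemma~\ref{l:orders}. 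Combined with $r\equiv1\pmod i$, this should leave only $r=5$ for $i=4$ (where $5\mid q^2+1$) and $r=31$ for $i=6$. For $r=31$ the extra conditions $n=9$ and $5\mid q-\varepsilon$ come from checking which neighbours $r_1,r_2,r_3$ of $r_6(\varepsilon q)$ are needed to host $\pi(S_0)$ with $u_0=2$: $|L_m(2)|$ involves $31$, $5$, and so on. I expect this last step, which excludes all other pairs $(i,r)$ with small $i$ and derives the precise side conditions in case (ii), to be the main obstacle. It requires a case split over the possible types of $S$ from Table~\ref{tab:5tL8} and careful use of Zsigmondy primes and of Lemma~\ref{l:unique} to locate the images of the disjoint-neighbourhood primes of Lemma~\ref{l:disjoint}.
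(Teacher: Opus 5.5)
The paper does not prove this lemma itself; it quotes \cite[Proposition~4.3]{26Sta.t}. Your text is a plan rather than a proof. Both decisive steps are only announced (``counting cocliques then forces $i$ to be small'', ``this should leave only $r=5$ \dots\ and $r=31$''), and the mechanism you propose for the first one does not work.

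\textbf{Step~2.} The other members of a coclique $\rho\ni r$ are \emph{non}adjacent to $r$; you wrote the opposite. In the field case the only constraint your argument places on them is that they avoid $\pi(S_0)$. That constraint is too weak. For every $j\in E(S)$, the set $R_j(u)\cap R_{rj}(u_0)$ (read $\tau u$, $\tau u_0$ for unitary $S$) is nonempty by Lemma~\ref{l:zsigmondy}. It is also disjoint from $\pi(S_0)$, since $rj$ exceeds $2\prk(S)$. These primes therefore form a coclique of $GK(S)$ of size $|E(S)|\ge 4$, consisting of large primes, lying entirely outside $\pi(S_0)$. So a $5$-coclique through $r$ is compatible with everything Step~2 uses, and no count of cocliques through $r$ can force $i\le n/2$.

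The argument has to run the other way. All of $\pi(S_0)$ must lie in the neighbourhood of $r$ in $GK(L)$. For $i>n/2$ this neighbourhood consists only of $2$, $p$ and primes of $R_j(\varepsilon q)$ with $j\le n-i$ or $j\mid i$; note it is $j\mid i$, not $i\mid j$. You must then show that $\pi(S_0)$ cannot fit into this set. That means exhibiting primes of $\pi(S_0)$ whose nonadjacency in $GK(S)$ persists in $GK(G)$, which needs them to avoid $\pi(K)\cup\pi(\overline G/S)$. That is a statement of the same kind as the lemma you are proving. It is not available for the small primes ($p$, $r_1(\varepsilon q),\dots,r_4(\varepsilon q)$) that make up this neighbourhood, by Lemma~\ref{l:r in K} and case~(i) of the statement itself. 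It also breaks down for small $u_0$, e.g.\ $u_0=2$, where primitive divisors are missing and $\pi(S_0)$ is a short explicit list; this is presumably where a pair like $(6,31)$, with $31\in R_5(2)$, comes from.

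\textbf{Step~3.} This is where the content of the lemma actually lies: excluding every pair $(i,r)$ other than $(4,5)$ and $(6,31)$, and deriving $n=9$ and $5\mid q-\varepsilon$. It contains no argument, and the tools you name cannot supply one. Lemmas~\ref{l:orders} and~\ref{l:exp}, and inequalities such as $r\cdot k\le\max\omega(L)$ for $k\in\omega(S_0)$, only compare the size of $q$ with that of $u=u_0^r$. For every prime $r$ they are satisfied by $q$ and $u_0$ of suitable sizes. The congruence $r\equiv 1\pmod i$ has infinitely many solutions. Size comparisons therefore cannot single out a prime like $31$. What is missing is a structural step placing the primitive divisors $r_j(u_0)\in\pi(S_0)$ inside the few classes $R_k(\varepsilon q)$ adjacent to $r$, followed by an explicit analysis of the small-$u_0$ cases.

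\textbf{Diagonal case.} This one is easy to finish, although your justification is wrong: an outer diagonal element of order $r$ need not centralize a large torus. Since $r\mid(m,u-\tau)$ with $9\le m\le 12$, either $r=5$ and $i=4$, or $r=m=11$ and $i\in\{5,10\}$. In the latter case $r\in R_1(\tau u)$ is adjacent in $GK(S)$ to every prime outside $R_{10}(\tau u)\cup R_{11}(\tau u)$. Lemma~\ref{l:reduction}(i) then gives $t(r,G)\le 3<5=t(r,L)$, a contradiction.
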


\begin{proof}
See \cite[Proposition 4.3]{26Sta.t}.
\end{proof}

\begin{lemma}\label{l:t(S)>=t(L)}
Let $r\in\pi(L)\setminus\{p\}$.
\begin{enumerate}
\item Suppose that $\varphi(r,L)\geq 4$ if $L$ is a linear or unitary group and $\varphi(r,L)\geq 3$ otherwise. If $r\in\pi(S)$, then $t(r, S)\geq t(r, L)$. 
\item Suppose that $r$ is large with respect to $L$ and $r\not\in R_6(\varepsilon q)$ if $L=L_n^\varepsilon(q)$. Then $r$ is coprime to $v\cdot|K|\cdot |\overline G/S|$, $r\in\pi(S)$ and  
$t(r,S)\geq t(L)$.
\end{enumerate}
\end{lemma}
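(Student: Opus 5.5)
The plan is to transfer cocliques of $GK(L)=GK(G)$ into $GK(S)$. The basic observation is that $\omega(S)\subseteq\omega(\overline G)\subseteq\omega(G)=\omega(L)$. Hence two primes of $\pi(S)$ that are nonadjacent in $GK(L)$ are also nonadjacent in $GK(S)$. Since $\pi(G)=\pi(K)\cup\pi(S)\cup\pi(\overline G/S)$, any coclique $\rho$ of $GK(L)$ whose primes all avoid $|K|\cdot|\overline G/S|$ lies in $\pi(S)$ and is a coclique of $GK(S)$. So everything reduces to controlling which primes of a coclique can divide $|K|\cdot|\overline G/S|$.

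For (i), I would fix an $\{r\}$-coclique $\rho$ of size $t(r,L)$. The first step is to show that $r$ itself can be kept. It is given that $r\in\pi(S)$, and we simply keep $r$ in $\rho$. The issue is the other primes $s\in\rho\setminus\{r\}$. By Lemma~\ref{l:reduction}(i), if $|\rho|\ge3$ then at most one prime of $\rho$ divides $|K|\cdot|\overline G/S|$. If that prime lies in $\pi(S)$ anyway, nothing needs to be done. Otherwise it lies in $\pi(K)\cup\pi(\overline G/S)$ but not in $\pi(S)$, and I would analyse this exceptional prime $s$ using Lemma~\ref{l:r in K}.
\begin{itemize}
\item If $s\in\pi(K)\setminus\{v\}$, then $s$ divides $p(q^2-1)$ and $t(s,L)=2$. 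So $s$ is adjacent to all primes outside a single nonneighbour. Because $\varphi(r,L)\ge4$ (resp.\ $\ge3$), $s\ne r$ and $s$ is nonadjacent to $r$. Hence $\rho=\{r,s\}$, and the inequality $t(r,S)\ge 2$ is trivial: $r$ is large enough that some $r_i$ with large $i$ in $\pi(S)$ is nonadjacent to it, or failing that I would argue directly via Lemma~\ref{l:adj_criteria}.
\item If $s=v$, then $t(v,L)\le3$, and the same argument bounds $|\rho|$ by $3$.
\item If $s\in\pi(\overline G/S)\setminus\pi(S)$, then $s$ induces a field automorphism of $S$. Its centraliser contains a subfield subgroup of $S$, which makes $s$ adjacent to many primes of $S$. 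I would use this, together with Lemma~\ref{l:G/S-linear} in the linear and unitary case, to replace $s$ in $\rho$ by another prime of $\pi(S)$ nonadjacent to $r$ and to the rest of $\rho$. Typically this replacement is a primitive divisor with the same $e(\cdot,q)$ or a neighbouring index read off from Lemma~\ref{l:adj_criteria}.
\end{itemize}
In each case we obtain a coclique of $GK(S)$ through $r$ of size at least $t(r,L)$.

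For (ii), let $r$ be large, so that $t(r,L)=t(L)\ge5$, and let $\rho$ be a maximal coclique containing $r$. First, $r\notin\pi(K)$. For $r\ne v$, Lemma~\ref{l:r in K} would force $t(r,L)=2$, and $r=v\in\pi(K)$ would force $t(r,L)\le3$; both contradict $t(r,L)\ge5$. Second, $r\notin\pi(\overline G/S)$. For linear and unitary $L$ with $n\in\{9,10\}$, Table~\ref{tab:tL} gives $e(r,\varepsilon q)>n/2\ge 4.5$, so Lemma~\ref{l:G/S-linear} excludes $r$; the case $i=6$, $r=31$ is removed by the hypothesis $r\notin R_6(\varepsilon q)$. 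For the other types I would use the analogous description of $\pi(\overline G/S)$ from \cite{26Sta.t}, together with the fact that field and graph automorphisms of prime order $r$ would make $r$ adjacent to too many primes, contradicting $t(r,L)\ge5$. Third, $r\ne v$: here I would use that $t(v,S)$ is small (at most $4$, by \cite[Table 4]{05VasVd.t}) while $r$ has coclique number at least $5$, via the transfer argument above. Hence $r\in\pi(S)$. Large primes satisfy $\varphi(r,L)\ge4$ (resp.\ $\ge3$) by Table~\ref{tab:tL}, so part~(i) gives $t(r,S)\ge t(r,L)=t(L)$.

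The main obstacle is the replacement step in (i) when the exceptional prime of $\rho$ divides $|\overline G/S|$ but not $|S|$. This requires a case-by-case check of the adjacency criteria to find a substitute prime. I would first try to show that no such prime can sit in a coclique containing $r$ at all, because its centraliser in $S$ would produce an element of order $rs$.
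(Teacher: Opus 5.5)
\textbf{Part (ii).} Your argument is essentially the paper's. You show $r\notin\pi(K)$ by Lemma~\ref{l:r in K}, and $r\nmid|\overline G/S|$ by Lemma~\ref{l:G/S-linear}, or in general by \cite[Lemma 4.5(i)]{26Sta.t}, which is what the paper cites. Hence $r\in\pi(S)$, and (i) gives $t(r,S)\ge t(L)$. One ordering point: $r\ne v$ is not needed to get $r\in\pi(S)$. It should come last, from $t(r,S)\ge5>4\ge t(v,S)$.

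\textbf{Part (i) has a genuine gap.} The paper does not reprove (i); it imports it from \cite[Lemma 4.5(iii)]{26Sta.t}. Your reductions are fine. If $|\rho|\ge3$, the exceptional prime $s$ cannot lie in $\pi(K)\setminus\{v\}$, since $t(s,L)=2$. The bullet $s=v$ is vacuous, because $v$ always divides $|S|$. What remains is exactly the case $s\in\pi(\overline G/S)\setminus\pi(S)$. Here $s\ge5$ is the order of a field automorphism $\phi$ and $u=u_0^s$. At this point $\rho\setminus\{s\}$ only yields $t(r,S)\ge t(r,L)-1$, which already follows from Lemma~\ref{l:reduction}(i). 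Removing this $-1$ is the whole content of (i), and the proposal does not do it.

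Neither suggested fix closes the gap.
\begin{itemize}
\item Swapping $s$ for another prime of $R_{e(s,q)}(q)$ requires such a prime to exist and to divide $|S|$. That needs to know, for every $L$ with $5\le t(L)\le13$ and not just $n\in\{9,10\}$, which primes can divide $|\overline G/S|$. It also needs arithmetic information on the $k_i(q)$. For example, take $L=L_9^\varepsilon(q)$ and $r\in R_5(\varepsilon q)$. By Lemma~\ref{l:tL}, every coclique of size $t(r,L)=5$ through $r$ must meet $R_6(\varepsilon q)$, and Lemma~\ref{l:G/S-linear} does not exclude $31\in R_6(\varepsilon q)$ dividing $|\overline G/S|$.
\item Moving to a ``neighbouring index'' changes adjacencies with the rest of $\rho$, so it needs its own argument.
\item The fallback of building an element of order $rs$ from $C_S(\phi)$ cannot work. $C_S(\phi)$ is essentially the group of the same type over the field of order $u_0$. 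Nonadjacency of $r$ and $s$ in $GK(G)$ merely forces $r\nmid|C_S(\phi)|$, so that $e(r,u_0)=s\cdot e(r,u)$. Nothing you have rules this out.
\end{itemize}
As written, (i) is unproved. You need either the citation the paper uses or a real argument for the field-automorphism case.

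A minor point: $t(s,L)=2$ means the non-neighbours of $s$ are pairwise adjacent, not that $s$ has a single non-neighbour. Your conclusion $|\rho|\le2$ still holds, since $s\in\rho$.
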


\begin{proof}
Item (i) is proved in \cite[Lemma 4.5(iii)]{26Sta.t}. Suppose that $r$ is large with respect to $L$. According to Table \ref{tab:tL}, we see that $\varphi(r,L)\geq 4$ if $L$ is linear or unitary and $\varphi(r,L)\geq 3$ if $L$ is symplectic or orthogonal. By \cite[Lemma 4.5(i)]{26Sta.t}, it follows that $r$ is coprime to $|\overline G/S|$. Also $r\not\in\pi(K)$ by Lemma \ref{l:r in K}. Thus $r\in\pi(S)$ and applying (i) yields $t(r,S)\geq t(L)$. In particular, $r\neq v$ because $t(v,S)\leq 4$.
\end{proof}

\begin{lemma}\label{l:ts=tl}
Let $t(S)=t(L)$. Suppose that $i\in J(L)$ and $k_i(q)\neq k_6(\varepsilon q)$ if $L=L_n^\varepsilon(q)$. Then either $k_i(q)$ divides $k_j(u)$ for some $j\in E(S)$
or $k_i(q)$ divides $k_{j_1}(u)k_{j_2}(u)$,
where $j_1,j_2\in J(S)\setminus E(S)$.
\end{lemma}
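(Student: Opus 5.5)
We prove the statement by fixing a coclique $\rho$ of greatest size in $GK(L)$ with $i\in E(\rho,L)$, which exists because $i\in J(L)$. We then transport it to $GK(S)$ and use the rigidity of greatest cocliques in $GK(S)$ given by Lemma~\ref{l:tL}.

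\emph{Step 1: primes of $\rho$ are large and lie in $S$.} Every prime $r\in\rho$ is large with respect to $L$. Moreover $e(r,q)\in J(L)$ is such that $r\notin R_6(\varepsilon q)$ in the linear and unitary cases. Here I expect to check against Table~\ref{tab:tL} that $6\notin J(L)$ for $L_9^\varepsilon$ and $L_{10}^\varepsilon$, or else that primes with $\nu(e)=6$ are excluded exactly by the hypothesis $k_i(q)\neq k_6(\varepsilon q)$. Hence Lemma~\ref{l:t(S)>=t(L)}(ii) applies to every $r\in\rho$: $r\in\pi(S)$, $r\neq v$, $r$ is coprime to $|K|\cdot|\overline G/S|$, and $t(r,S)\ge t(L)=t(S)$. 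So each such $r$ is large for $S$, and $e(r,u)\in J(S)$.

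\emph{Step 2: work with every prime divisor of $k_i(q)$.} Let $r$ be an arbitrary prime divisor of $k_i(q)$. Replacing the element of $\rho$ with index $i$ by $r$ gives another greatest coclique of $GK(L)$, since adjacency depends only on $e(\cdot,q)$ by Lemma~\ref{l:adj_criteria}. By Step 1, every prime of $\pi(k_i(q))$ is therefore large in $S$, lies in $S$ away from $K$ and $\overline G/S$, and has $e(r,u)\in J(S)$. We also need the $r$-parts to match. Take an element of $L$ of order divisible by $(k_i(q))_r$; for instance, a cyclic torus contains an element of order $k_i(q)$ itself. This element lies in $\omega(G)$. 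Since $r$ is coprime to $|K|\cdot|\overline G/S|$, its image gives an element of $S$ whose order contains the full $r$-part. Thus $k_i(q)$ divides a product of the numbers $k_{e(r,u)}(u)$ over $r\mid k_i(q)$.

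\emph{Step 3: the combinatorics in $S$, which is the main obstacle.} We must show that all primes of $k_i(q)$ have the same $e(\cdot,u)=j\in E(S)$, or that they split between exactly two indices $j_1,j_2\in J(S)\setminus E(S)$. The primes of $k_i(q)$ are pairwise adjacent in $GK(L)$, since $k_i(q)\in\omega(L)$. They also have identical neighbourhoods outside $\rho$, and they all lie in greatest cocliques together with the other members of $\rho$, whose images in $S$ are fixed large primes. By Lemma~\ref{l:tL}, every greatest coclique $\rho'$ of $S$ satisfies $E(\rho',S)=E(S)$ or $E(S)\cup\{j\}$ with $j\in J(S)\setminus E(S)$.

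Fix $\rho\setminus\{r\}$. It occupies $t(S)-1$ of the $e$-values. As $r$ varies over $\pi(k_i(q))$, the set $\{e(r,u)\}$ is forced to be the single remaining value of $E(S)$ when $E(\rho\setminus\{r\},S)\supseteq E(S)\setminus\{j\}$ misses exactly one element of $E(S)$; this gives the first alternative. Otherwise $E(\rho\setminus\{r\},S)\supseteq E(S)$, and then $e(r,u)$ must lie in $J(S)\setminus E(S)$. In that case the values not already used are constrained by the adjacency criterion, Lemma~\ref{l:adj_criteria}: two distinct indices in $J(S)\setminus E(S)$ are mutually adjacent, for example $m/2$ and $m$. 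The table shows that this set has at most three elements, and the distinct values arising are pairwise adjacent in $S$. Checking the rows of Table~\ref{tab:tL} with $t(S)\in\{5,\dots,13\}$ should show that at most two such indices can occur, which gives the second alternative. I expect the delicate part to be ruling out a mix of one index in $E(S)$ with one in $J(S)\setminus E(S)$. I would try first to exclude this using that a prime with $e$-value in $E(S)$ would then be nonadjacent to the other one, contradicting the adjacency of all primes of $k_i(q)$.
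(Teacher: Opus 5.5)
There is a genuine gap in your argument. It has two parts, plus one missing justification.

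\textbf{The coclique $\rho$ does not transfer to $S$.} In Step~1 you claim that Lemma~\ref{l:t(S)>=t(L)}(ii) applies to every prime of $\rho$. But the hypothesis $k_i(q)\neq k_6(\varepsilon q)$ concerns only the index $i$. For $L=L^\varepsilon_9(q)$, $L^\varepsilon_{10}(q)$ and $L^\varepsilon_{11}(q)$, the index of $R_6(\varepsilon q)$ lies in $E(L)$. So every greatest coclique $\rho$ contains a prime of $R_6(\varepsilon q)$. Such a prime need not be large in $S$, or even lie in $\pi(S)$: by Lemma~\ref{l:G/S-linear}, $31\in R_6(\varepsilon q)$ may divide $|\overline G/S|$. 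Hence your count ``$\rho\setminus\{r\}$ occupies $t(S)-1$ of the $e$-values'' fails exactly for the groups treated in Section~\ref{s:t12}.

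The fix is to drop $\rho$, as the paper does. Each prime of $k_i(q)$ lies in $R_i(q)$ with $i\in J(L)$, so it is itself large for $L$, and Lemma~\ref{l:t(S)>=t(L)}(ii) applies to it directly. These primes are pairwise adjacent in $GK(S)$. Now take a large prime of $S$ with $e(\cdot,u)=j\in E(S)$. It is nonadjacent to every large prime whose $e$-value is a different element of $J(S)$, because both indices occur in a common greatest coclique and adjacency of large primes depends only on $e(\cdot,u)$. This is the argument you sketch at the very end. It yields the dichotomy: either $\pi(k_i(q))\subseteq R_j(u)$ for one $j\in E(S)$, or $\pi(k_i(q))\subseteq\bigcup_{j\in J(S)\setminus E(S)}R_j(u)$.

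\textbf{Pairwise adjacency cannot reduce $J(S)\setminus E(S)$ to two indices.} You expect a table check in Step~3 to do this, but it cannot. For $S=S_{14}(u)$ or $O_{15}(u)$ (where $t(S)=6$), $J(S)\setminus E(S)=\{3,6,8\}$, with $\eta$-values $3,3,4$. By Lemma~\ref{l:adj_criteria}(ii), every two of these indices give adjacent primes. Likewise $\{5,8,10\}$ for $O_{20}^-(u)$ is pairwise adjacent by Lemma~\ref{l:adj_criteria}(iii).

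What actually excludes three indices is that $k_i(q)$ is a single element order of $S$. Indeed, an element of order $k_i(q)$ in $G$ maps into $S$, since $k_i(q)$ is coprime to $|K|\cdot|\overline G/S|$. On the other hand, no element of $S$ has order divisible by primitive divisors for all three indices, because their $\eta$-values sum to more than $\prk(S)$.

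\textbf{The $r$-parts in Step~2 need an argument.} You must justify that for a large prime $r$ of $S$ with $e(r,u)=j$, no element order of $S$ has $r$-part exceeding $(k_j(u))_r$. This holds because $r$ is large, but it has to be checked. Only with it does $k_i(q)\in\omega(S)$ give divisibility by $k_j(u)$ or by $k_{j_1}(u)k_{j_2}(u)$.
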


\begin{proof}
By Lemma \ref{l:t(S)>=t(L)}(ii), it follows that $r_i(q)\in\pi(S)$ and $t(r_i(q),S)=t(S)$. Since all elements of $R_i(q)$ are large with respect to $S$ and pairwise adjacent in $GK(S)$, either 
there is $j\in E(S)$ such that $R_i(q)\subseteq R_j(u)$, or 
$R_i(q)\subseteq \cup_{j\in J(S)\setminus E(S)}R_j(u)$. The rest of the proof repeats the corresponding part of the proof of \cite[Lemma 5.1]{26Sta.t} since the latter does not uses the assumption that $L$ is linear or unitary and $t(L)\geq 6$.
\end{proof}

\begin{lemma}\label{l:ts=tl+1}
Suppose that $L$ is symplectic or orthogonal and  $S=L_m^\tau(u)$, where $m$ is odd.
Then one of the following holds:
\begin{enumerate}
\item for every $i\in E(L)$,  every $r_i(q)$ is large with respect to $S$ and $k_i(q)$ divides $k_j(\tau u)$ for some $j$ with $(m+1)/2\leq j\leq m$;
\item there is $k\in E(L)$ and $r\in R_k(q)$ such $\varphi(r,S)\leq (m-1)/2$ and 
for every $i\in J(L)\setminus\{k\}$, $k_i(q)$ divides $k_j(\tau u)$ for some $j$ with $(m+1)/2\leq j\leq m$. 
\end{enumerate}

If, in addition, $t(S)=t(L)+1$, then in {\rm(ii)}, we have $r\in R_{(m-1)/2}(\tau u)$ 
and $j\neq (m-1)/2$, $m-1$.
\end{lemma}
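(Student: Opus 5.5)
The plan is to combine Lemma~\ref{l:t(S)>=t(L)}(ii) with the structure of large primes in $GK(S)$ for $S=L_m^\tau(u)$ with $m$ odd, taken from Table~\ref{tab:tL}. Since $L$ is symplectic or orthogonal, every $i\in J(L)$ has $\eta(i)\geq n/2-1\geq 2$, so $\varphi(r,L)\geq 3$ for large $r$; the exclusion of $R_6$ in Lemma~\ref{l:t(S)>=t(L)}(ii) concerns only linear and unitary $L$. Hence every prime $r\in R_i(q)$ with $i\in J(L)$ lies in $\pi(S)$, is coprime to $v|K||\overline G/S|$, and satisfies $t(r,S)\geq t(L)$.

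\textbf{Step 1: classifying the possible $\varphi(r,S)$.} For $S=L_m^\tau(u)$ with $m$ odd, Table~\ref{tab:tL} gives $E(S)=J(S)=\{j\mid (m+1)/2\leq \varphi(j)\leq m\}$, where $\varphi$ is $e$ or $\nu$. By Lemma~\ref{l:tri}(ii), a prime $r$ with $m/3<\varphi(r,S)<m/2$ satisfies $t(r,S)=\varphi(r,S)\leq (m-1)/2$. If $t(S)=t(L)$, a large prime of $L$ must be large in $S$. If $t(S)=t(L)+1$, such a prime can have $t(r,S)=t(S)-1$, which happens when $\varphi(r,S)=(m-1)/2$. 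Using Lemma~\ref{l:tri}(i), together with the bound $t(L)\geq 5$ and $t(S)\leq t(L)+2$ from Lemma~\ref{l:rest on t(S)}, I would rule out the small values $\varphi(r,S)\leq m/3$. So every large prime $r$ of $L$ has either $\varphi(r,S)\geq (m+1)/2$ or $\varphi(r,S)$ in a narrow window just below $m/2$.

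\textbf{Step 2: the dichotomy.} If every $r\in R_i(q)$ with $i\in E(L)$ has $\varphi(r,S)\geq (m+1)/2$, we are in case (i). For divisibility I would argue as in Lemma~\ref{l:ts=tl}. The primes of $R_i(q)$ are pairwise adjacent in $GK(L)=GK(G)$. Two large primes of $S$ with distinct values of $\varphi\geq (m+1)/2$ are nonadjacent, since the sum exceeds $m$ and neither value divides the other. Hence all of $R_i(q)$ lies in a single $R_j(\tau u)$. The multiplicity statement, that $k_i(q)$ divides $k_j(\tau u)$, then follows as in \cite[Lemma 5.1]{26Sta.t}, using Sylow subgroups that are cyclic and embed in tori of $S$.

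Otherwise, some $r\in R_k(q)$ with $k\in E(L)$ has $\varphi(r,S)\leq (m-1)/2$. The key claim is that there is at most one such $k$. Suppose $r\in R_k(q)$ and $r'\in R_{k'}(q)$ are two such primes with $k\neq k'$ in a coclique. Both have $\varphi$-values at most $(m-1)/2$, so the sum is at most $m-1<m$, and Lemma~\ref{l:adj_criteria}(i) makes them adjacent, which is a contradiction. The same argument applied to primes of $J(L)\setminus\{k\}$ shows that all remaining primes land in $\{j\geq (m+1)/2\}$, and the divisibility follows as before. Here one must also handle $i\in J(L)\setminus E(L)$: such an $i$ can be placed in a maximal coclique together with $E(L)$, and the same argument applies.

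\textbf{Step 3: the case $t(S)=t(L)+1$.} Now $t(r,S)\geq t(L)=t(S)-1$. Lemma~\ref{l:tri}(ii) gives $t(r,S)=\varphi(r,S)$ in the window, so $\varphi(r,S)\geq (m-1)/2$, and therefore $r\in R_{(m-1)/2}(\tau u)$. For the remaining $i$, the prime $r_i(q)$ is nonadjacent to $r$. Since $r_{(m-1)/2}$ is adjacent to $r_{m-1}(\tau u)$ because $(m-1)/2$ divides $m-1$, and also adjacent to anything with index $(m-1)/2$, we get $j\neq (m-1)/2, m-1$.

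\textbf{Main obstacle.} I expect the hardest part to be the exclusion of small $\varphi(r,S)\leq m/3$ in Step 1. This requires the bound $t(r,S)\leq \varphi(r,S)$ from Lemma~\ref{l:tri}(i) together with explicit values of $t(S)$ relative to $m$. I would carry out this check case by case over the few $m$ allowed by $5\leq t(L)\leq 13$.
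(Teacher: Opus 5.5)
Your proposal is correct and follows essentially the paper's argument. Nonadjacency to the non-large prime $r\in R_k(q)$ forces $\varphi(r_i(q),S)\ge (m+1)/2$ for every other $i\in J(L)$, divisibility then comes from the argument of Lemma~\ref{l:ts=tl}, and Lemma~\ref{l:tri} gives $r\in R_{(m-1)/2}(\tau u)$ when $t(S)=t(L)+1$. Your Step~1 and the ``main obstacle'' are unnecessary: Lemma~\ref{l:tri}(i) gives directly $(m-1)/2=t(L)\le t(r,S)\le\varphi(r,S)$, with no case check. Also, the pairwise adjacency of $R_i(q)$ you need holds in $GK(S)$, not just $GK(G)$, because $k_i(q)\in\omega(S)$ by coprimality to $|K|\cdot|\overline G/S|$. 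Finally, $J(S)=E(S)$ fails for $S=L_{11}(2)$, which the paper excludes by invoking $t(S)\ge t(L)+1$.
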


\begin{proof}
Since $t(S)\geq t(L)+1\geq 6$, it follows that $S\neq L_{11}(2)$ and so $J(S)=E(S)$ according to Table~\ref{tab:tL}. 

Let $i\in E(L)$. If every element of $R_i(q)$ is large with respect to $S$, then arguing as in the proof of Lemma \ref{l:ts=tl}, we conclude that $k_i(q)$ divides $k_j(u)$ for some $j$ with $(m+1)/2\leq j\leq m$. 

Suppose that for some $k\in E(L)$, there is $r\in R_{k}(q)$ such that $r$ is not large with respect to $S$. Then $\varphi(r,L)\leq (m-1)/2$ by Table~\ref{tab:tL}. If $i\in E(L)\setminus\{k\}$, then $r_i(q)$ and $r$ are not adjacent in $GK(L)$ and so in $GK(S)$.
By Lemma \ref{l:adj_criteria}, this implies that $r_i(q)\in R_j(\tau u)$ for some $j$ with $(m+1)/2\leq j\leq m$ and then $k_i(q)$ divides $k_j(u)$.  

If $t(S)=t(L)+1$, then $t(r,S)\geq t(L)=t(S)-1$. Therefore, $t(r,S)=t(S)-1=(m-1)/2$ and so $r\in R_{(m-1)/2}(\tau u)$ by Lemma \ref{l:tri}. Then $r$ is adjacent to prime divisors of $u^{m-1}-1$ and therefore $j\neq (m-1)/2,m-1$.
\end{proof}

\begin{lemma}\label{l:s2n_even} Let $L$ be of the groups $S_{2n}(q)$, $O_{2n+1}(q)$, or $O_{2n}^-(q)$, where $n\geq 6$ is even. 
\begin{enumerate}
    \item If $S$ is one of $S_{2m}(u)$, $O_{2m+1}(u)$, $O_{2m+2}^+(u)$, $O_{2m}^\tau (u)$,  where $m$ is odd and $u>3$, then $S=O_{2m}^+(5)$.
    \item  If $S=L_m^\tau(u)$, where $m$ is even and $u>3$, then $u-\tau$ divides $m$ and $k_{2n}(q)$ divides $k_{m-1}(\tau u)$.
    \item  If $S=L_m^\tau(u)$, where $m$ is odd and $u>3$, then $k_{2n}(q)$ divides $k_{m}(\tau u)$.
    \item  If $S$ is one of $S_{2m}(u)$, $O_{2m+1}(u)$, or $O_{2m}^-(u)$, where $m$ is even, then $k_{2n}(q)$ divides $k_{2m}(u)$.
\end{enumerate}
\end{lemma}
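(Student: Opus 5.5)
The whole argument rests on one pair of primes with disjoint neighbourhoods, transported from $GK(L)$ to $GK(S)$. By Lemma~\ref{l:disjoint}, for $L$ as in the statement we may take $s_1\in R_{2n}(q)$ and $s_2\in R_{n-1}(\epsilon q)$, with $\epsilon$ at our disposal; their neighbourhoods in $GK(L)$ are disjoint. Since $\eta(2n)=n$, every prime $r\in R_{2n}(q)$ is large with respect to $L$ by Table~\ref{tab:tL}. Hence Lemma~\ref{l:t(S)>=t(L)}(ii) gives that $r$ is coprime to $v|K||\overline G/S|$, that $r\in\pi(S)$, and that $t(r,S)\geq t(L)$. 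The same holds for $s_2$ whenever $n-1$ lies in the appropriate set $E(L)$, and in any case we can choose $\epsilon$ and the prime so that $s_2\in\pi(S)$ and $s_2\nmid |K||\overline G/S|$, using Lemmas~\ref{l:r in K} and~\ref{l:reduction}(ii).

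The key transfer step is the following. Suppose some $r\in\pi(S)$ were adjacent in $GK(S)$ to both $s_1$ and $s_2$. Then $r$ would be adjacent to both in $GK(\overline G)$, hence in $GK(G)=GK(L)$, which is impossible. Likewise $s_1,s_2$ are nonadjacent in $GK(S)$. Therefore $s_1$ and $s_2$ have disjoint neighbourhoods in $GK(S)$, and Lemma~\ref{l:unique} applies to $S$ (here $u>3$, as assumed in the relevant items). Each item of that lemma tells us which $R_j(\tau u)$ contains $s_1$ or $s_2$. Because $k_{2n}(q)$ is the product of all primes of $R_{2n}(q)$ and all of them play the role of $s_1$, the conclusion "$s_1\in R_j$" upgrades to "$k_{2n}(q)$ divides $k_j(\tau u)$". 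To make this upgrade we use that all of $R_{2n}(q)$ is pairwise adjacent in $GK(L)$, and each of them has disjoint neighbourhoods with the fixed $s_2$. Every prime of $R_{2n}(q)$ is then forced into the same set $R_j$, up to the renumbering ambiguity, which must be resolved.

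The case analysis runs as follows.

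\textbf{Items (ii), (iii), (iv).} For $S=L_m^\tau(u)$ with $m$ odd, Lemma~\ref{l:unique}(ii) puts one of $s_1,s_2$ in $R_m(\tau u)$. For $S=L_m^\tau(u)$ with $m$ even, Lemma~\ref{l:unique}(i) puts one of them in $R_{m-1}(\tau u)$, and if $u-\tau\nmid m$ the other lies in $R_m(\tau u)$. For item (iv), Lemma~\ref{l:unique}(v) puts one in $R_{2m}(u)$. To decide that it is $s_1$, not $s_2$, we exploit the freedom in $\epsilon$. Both $R_{n-1}(q)$ and $R_{n-1}(-q)=R_{2(n-1)}(q)$ can serve as $s_2$, while $R_{2n}(q)$ is fixed. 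A single set $R_j(\tau u)$ cannot contain both $r_{n-1}(q)$ and $r_{2(n-1)}(q)$: if it did, then $r_{n-1}(q)$ and $r_{2(n-1)}(q)$ would be adjacent in $GK(S)$, whereas $r_{n-1}(q)$ and $r_{2(n-1)}(q)$ are nonadjacent in $GK(L)$ by Lemma~\ref{l:adj_criteria}, since $\eta$-values $n-1,n-1$ sum to more than $n$ and the ratio is $2$. So the distinguished class is $s_1$. For item (ii), in the case $u-\tau\nmid m$ both $R_{m-1}$ and $R_m$ would be forced, and we must rule this out. Both $r_{m-1}(\tau u)$ and $r_m(\tau u)$ would then occur among the primes of $L$ as $s_1$ and as the chosen $s_2$. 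Switching $\epsilon$ gives two different $s_2$-classes both landing in $R_m(\tau u)$, which again contradicts the nonadjacency of $r_{n-1}(q)$ and $r_{2(n-1)}(q)$. Hence $u-\tau\mid m$ and $k_{2n}(q)\mid k_{m-1}(\tau u)$.

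\textbf{Item (i).} Lemma~\ref{l:unique}(iii),(iv) places $s_1,s_2$ in $R_{2m}(u)$ and $R_m(u)$, or in $R_m(\tau u)$ and $R_{2(m-1)}(u)$. With two choices of $\epsilon$ we get three pairwise nonadjacent classes $R_{2n}(q)$, $R_{n-1}(q)$, $R_{2(n-1)}(q)$, each of which must lie in one of only two sets $R_j$. This is a contradiction, unless Lemma~\ref{l:unique}(iv) does not pin down $s_2$, which happens exactly when $\tau u=+5$, i.e.\ $S=O_{2m}^+(5)$.

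The main obstacle I expect is the bookkeeping that guarantees the chosen $s_2$ really lies in $\pi(S)$ and not only in $\pi(K)$ or $\pi(\overline G/S)$. Primes in $R_{n-1}(\pm q)$ need not be large for $L$ (for instance when $n\equiv 2\pmod 4$, $n-1$ is not in $E(L)$). For these I would invoke Lemma~\ref{l:r in K} (since $t(s_2,L)>2$) together with Lemma~\ref{l:reduction}(ii), checking via Lemma~\ref{l:disjoint}(ii) that $s_2$ is not adjacent to $2$ for the relevant choice of $\epsilon$, or else using the analogue of Lemma~\ref{l:G/S-linear} for $\overline G/S$.
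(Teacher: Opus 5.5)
Your argument is the paper's own: both transfer the disjoint-neighbourhood pair $r_{2n}(q)$, $r_{n-1}(\epsilon q)$ from $GK(L)$ to $GK(S)$ and apply Lemma~\ref{l:unique}. Both then use the two choices of $\epsilon$, together with the nonadjacency of $r_{n-1}(q)$ and $r_{2(n-1)}(q)$, to identify $R_{2n}(q)$ as the distinguished class, to exclude $u-\tau\nmid m$ in (ii), and to exclude every group in (i) except $O_{2m}^+(5)$.

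The obstacle you anticipate does not arise. In every case of the statement both $n-1$ and $2(n-1)$ lie in $E(L)$ by Table~\ref{tab:tL}; for $n\equiv 2\pmod 4$ the values missing from $E(L)$ are $n/2$ and $n$, not $n-1$. So Lemma~\ref{l:t(S)>=t(L)}(ii) puts $s_2$ in $\pi(S)$ directly, exactly as in the paper. This matters, because your fallback through Lemma~\ref{l:reduction}(ii) would fail: $r_{n-1}(\epsilon q)$ is adjacent to $2$ for both signs, by Lemma~\ref{l:disjoint}(ii).
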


\begin{proof}
For every $\epsilon \in\{+,-\}$, the primes $r_{2n}(q)$ and $r_{n-1}(\epsilon q)$ divide $|S|$ by Lemma \ref{l:t(S)>=t(L)} and have no common neighbors in $GK(S)$ by Lemma \ref{l:disjoint}.
By Lemma \ref{l:unique}, in (i) either both $r_{n-1}(q)$ and $r_{2n-2}(q)$ lie in the same $R_j(u)$, which is impossible, or $S$ is as stated. Similarly, in (ii)--(iv),
we have that $k_{2n}(q)$ divides $k_{m-1}(\tau u)$, or $k_{m}(\tau u)$, or $k_{2m}(u)$, respectively.  Moreover, if  $u-\tau$ does not divide $m$ in (ii), then we get a contradiction as in (i). 
\end{proof}

\begin{lemma}\label{l:s2n_odd} Let $L$ be of the groups $S_{2n}(q)$, $O_{2n+1}(q)$, or $O_{2n+2}^+(q)$, where $n\geq 5$ is odd. Suppose that $S=L_m^\tau(u)$, where $u>3$, and there is $\epsilon \in\{+,-\}$ such that $R_n(\epsilon q)\not\subseteq R_m(\tau u)\cup R_{m-1}(\tau u)$. Then $q\equiv \epsilon\pmod 4$ and the following hold.
\begin{enumerate}
    \item  If $m$ is even, then $u-\tau$ divides $m$ and $k_{n}(-\epsilon q)$ divides $k_{m-1}(\tau u)$.
    \item  If $m$ is odd,  then $k_{n}(-\epsilon q)$ divides $k_{m}(\tau u)$.
\end{enumerate}
\end{lemma}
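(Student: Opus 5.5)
The plan follows the proof of Lemma~\ref{l:s2n_even}. Since $n$ is odd, $R_{2n}(q)=R_n(-q)$. So for either choice of $\epsilon$, the two sets in the first row of Table~\ref{tab:disjoint} are $R_n(\epsilon q)$ and $R_n(-\epsilon q)$, and by Lemma~\ref{l:disjoint} every prime of one set has neighborhood in $GK(L)$ disjoint from that of every prime of the other. By Table~\ref{tab:tL} (in both cases $n\equiv1$ and $n\equiv3\pmod 4$), the indices $n$ and $2n$ lie in $E(L)$, so all these primes are large with respect to $L$. By Lemma~\ref{l:t(S)>=t(L)}(ii) they therefore lie in $\pi(S)$ and are different from $v$.

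Next I transfer the disjointness to $GK(S)$. Since $\omega(S)\subseteq\omega(\overline G)$ and $\pi(\overline G)\subseteq\pi(G)=\pi(L)$, I expect that adjacency of two primes in $GK(S)$ gives adjacency in $GK(G)=GK(L)$. This requires lifting an element of order $rs$ from $S\le G/K$ to an element of $G$ whose order is divisible by $rs$, and then taking a power. Granting that, any $s\in R_n(\epsilon q)$ and any $s'\in R_n(-\epsilon q)$ still have disjoint neighborhoods in $GK(S)$, and Lemma~\ref{l:unique}(i),(ii) applies because $u>3$. Now fix $s\in R_n(\epsilon q)$ with $s\notin R_m(\tau u)\cup R_{m-1}(\tau u)$, which exists by hypothesis, and let $s'$ range over $R_n(-\epsilon q)$.
\begin{itemize}
\item If $m$ is odd, Lemma~\ref{l:unique}(ii) puts one of $s,s'$ in $R_m(\tau u)$. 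It cannot be $s$, so $s'\in R_m(\tau u)$, and hence $R_n(-\epsilon q)\subseteq R_m(\tau u)$.
\item If $m$ is even, the same argument with Lemma~\ref{l:unique}(i) gives $R_n(-\epsilon q)\subseteq R_{m-1}(\tau u)$. If $u-\tau$ did not divide $m$, the other prime $s$ would lie in $R_m(\tau u)$, contrary to the choice of $s$. Hence $u-\tau$ divides $m$.
\end{itemize}
To upgrade the inclusion of prime sets to divisibility of $k_n(-\epsilon q)$ by $k_m(\tau u)$ or $k_{m-1}(\tau u)$, I use that $k_n(-\epsilon q)$ divides an element order of $L$, namely the appropriate entry $m_i(L)$ of Table~\ref{tab:disjoint}. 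So some element of $G$ has order divisible by $k_n(-\epsilon q)$. Its image in $S$ should carry the full $R_n(-\epsilon q)$-part, because these primes are coprime to $|K|\cdot|\overline G/S|$ by Lemma~\ref{l:t(S)>=t(L)}(ii). Finally, in $L_m^\tau(u)$ every element order is divisible by primes of $R_j(\tau u)$, $j\in\{m-1,m\}$, only to a power dividing $k_j(\tau u)$, since the corresponding Sylow subgroups sit in a cyclic torus.

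It remains to prove $q\equiv\epsilon\pmod 4$; this is the step needing the most care. By Lemma~\ref{l:disjoint}(ii), the only maximal orders of $L$ divisible by $s\in R_n(\epsilon q)$ are of the form $(q^n-\epsilon)/(2,q-1)$, or $(q^n-\epsilon)/(4,q-\epsilon)$ for the orthogonal group of even dimension. Because $n$ is odd, $(q^n-\epsilon)_2=(q-\epsilon)_2$. If $q\equiv-\epsilon\pmod 4$ this $2$-part equals $2$, so dividing by $2$ leaves an odd number and $s$ is not adjacent to $2$ in $GK(L)$. On the other hand, $s\in\pi(S)\setminus\{2,v\}$ and $s\in R_i(\tau u)$ with $\varphi(s,S)=i\le m-2$ by the choice of $s$. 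Lemma~\ref{l:adj_criteria1} then gives $2s\in\omega(S)$, and hence $s$ is adjacent to $2$ in $GK(G)=GK(L)$ by the same lifting argument as before. This contradiction shows $q\equiv\epsilon\pmod 4$. The main thing to check here is the exact $2$-part of the entries $m_2(L)$ in Table~\ref{tab:disjoint}, especially the $(4,q-\epsilon)$ denominator for $O_{2n+2}^+(q)$.
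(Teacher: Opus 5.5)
Your proof is correct and follows the paper's argument. You transfer the disjoint neighborhoods of $R_n(q)$ and $R_{2n}(q)=R_n(-q)$ from $GK(L)$ to $GK(S)$ and apply Lemma~\ref{l:unique}, which gives the divisibilities and $u-\tau\mid m$. For the congruence, if $q\equiv-\epsilon\pmod 4$ then $2r_n(\epsilon q)\notin\omega(L)$, whereas Lemma~\ref{l:adj_criteria1} gives $2s\in\omega(S)$ for any $s\in R_n(\epsilon q)\setminus(R_m(\tau u)\cup R_{m-1}(\tau u))$.

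Two minor slips. The relevant row of Table~\ref{tab:disjoint} is the second, not the first. That row gives $(q^n-\epsilon)/(2,q-1)$ for $O_{2n+2}^+(q)$ as well. Your $(4,q-\epsilon)$ variant comes from a different row, but it equals $2$ in the case at hand, so the argument is unaffected.
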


\begin{proof}
The primes $r_{2n}(q)$ and $r_{n}(q)$ divide $|S|$ by Lemma \ref{l:t(S)>=t(L)} and have no common neighbors in $GK(S)$ by Lemma \ref{l:disjoint}. By Lemma \ref{l:unique}, 
it follows that $k_{n}(-\epsilon q)$ divides $k_{l}(\tau u)$, where $l\in\{m,m-1\}$ and $l$ is odd. Moreover, $u-\tau$ divides $m$ in (i). 

If $q\equiv -\epsilon\pmod 4$, then $2r_n(\epsilon q)\not\in\omega(L)$ and so $R_n(\epsilon q)\subseteq R_m(\tau u)\cup R_{m-1}(\tau u)$ by Lemma \ref{l:reduction}(ii) and \cite[Tables 4 and 6]{05VasVd.t}. 
\end{proof}

\begin{lemma}\label{l:d-estim}
Suppose that $d=t(S)-t(L)>0$.  If $t(S)\geq7$, then $u^{3d}<q^2$.
\end{lemma}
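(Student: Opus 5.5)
The plan is to compare exponents. Since $\omega(G)=\omega(L)$, we have $\exp(G)=\exp(L)$. The group $S$ is a section of $G$, so $\exp(S)$ divides $\exp(L)$. Combining this with Lemma~\ref{l:exp} applied to $L$ gives
$$\exp(S)\le \exp(L)\le \beta(L)\,p\,q^{\gamma(L)}.$$
Here $5\le t(L)\le 13$. However, Lemma~\ref{l:exp} only covers $5\le t(L)\le 7$, so I expect the argument to need either that range or a general bound of the same shape. For larger $t(L)$ one can use the elementary estimate $\exp(L)\le |L|_{p'}\cdot p^{\lceil\log_p(\text{max }p\text{-order})\rceil}$, expressed via products of cyclotomic values.

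\textbf{Lower bound on $\exp(S)$.} For $S$ we need a lower bound in terms of $u$ and $t(S)$. The natural choice is to use the large primes of $S$. By Lemma~\ref{l:t(S)>=t(L)}(ii) and Lemma~\ref{l:ts=tl}, every $k_j(u)$ with $j\in E(S)$ divides $\exp(S)$, and these factors are pairwise coprime. By Lemma~\ref{l:cyclotomic estimation}(iii), each satisfies $k_j(u)\ge \Phi_j(u)/j>u^{\varphi(j)}/(2j)$. Multiplying over $j\in E(S)$ gives
$$\exp(S)\ \ge\ \prod_{j\in E(S)} k_j(u)\ \gtrsim\ u^{\sum_{j\in E(S)}\varphi(j)}.$$

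\textbf{Inverting the comparison.} The argument as set up so far bounds $u$ from above in terms of $q$, but what we actually need is $u^{3d}<q^2$. The key is therefore to exploit the excess $d=t(S)-t(L)$. Since $|E(S)|\ge t(S)-1=t(L)+d-1$, the prime graph of $S$ has at least $d$ more large classes than $L$ does. By Lemma~\ref{l:ts=tl}-type arguments, each $k_i(q)$ for $i\in E(L)$ divides some $k_j(u)$ with $j\in E(S)$. This leaves at least $d-1$ (or, with more care, $d$) indices $j\in E(S)$ not hit by any $k_i(q)$.

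The primes in those unused $k_j(u)$ must still lie in $\pi(L)$. However, they are non-large in $L$, so they lie in $R_i(q)$ with $\varphi(i,L)$ small. They therefore divide $\prod_{i\text{ small}}\Phi_i(q)$. Since $t(S)\ge7$ these small indices are bounded, so their product is at most roughly $q^{c}$ for a small constant $c$. Meanwhile each unused $k_j(u)$ has size about $u^{\varphi(j)}$ with $\varphi(j)\ge$ something like $t(S)$ or more, since $j>\prk(S)/2$ and $\prk(S)\ge 2t(S)-2$.

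Combining these: the product of the unused $k_j(u)$ is at most $q^{c}$ for small $c$, yet at least $u^{\sum\varphi(j)}$ with $\sum\varphi(j)\ge 3d\cdot c/2$ or so. Checking that the constants give exactly $u^{3d}<q^2$ is where I would use Table~\ref{tab:tL} case by case, for the types of $S$ with $t(S)\ge 7$.

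\textbf{Main obstacle.} The hardest step is making the counting of "unused" large classes of $S$ precise. This includes the cases $J(S)\ne E(S)$, where $k_i(q)$ may split across $k_{j_1}(u)k_{j_2}(u)$. It also requires verifying that the small-$\varphi$ primes of $L$ contribute at most $q^{2}$ in total, while $\varphi(j)\ge 3$ per unused class. I would first settle $S$ linear/unitary, where $E(S)$ is an interval, and then handle the symplectic and orthogonal rows separately.
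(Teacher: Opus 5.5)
There is a genuine gap, and it sits exactly where the factor $q^2$ has to appear. You claim that the primes in the ``unused'' $k_j(u)$ are non-large in $GK(L)$, and that their total contribution is therefore at most $q^c$ for a small constant $c$, ultimately $q^2$. This is false. Non-large only means $e(r,q)\notin J(L)$, and the non-large part of $L$ grows with the rank. For example, for $L=L_{11}(q)$ we have $E(L)=J(L)=\{6,\dots,11\}$, so $R_4(q)$ and $R_5(q)$ are non-large. Yet $k_4(q)k_5(q)\in\omega(L)$ has size about $q^6$. The hypothesis $t(S)\ge 7$ does nothing to bound the non-large indices of $L$. So no comparison of unused classes of $S$ with non-large primes of $L$ can give $u^{3d}<q^2$. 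The closing estimate ``$\sum\varphi(j)\ge 3d\cdot c/2$ or so'' is reverse-engineered from the conclusion.

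The supporting steps fail too:
\begin{itemize}
\item Lemma \ref{l:ts=tl} requires $t(S)=t(L)$. When $d>0$, a large prime of $L$ need not be large in $S$; the paper has to allow for this, cf.\ alternative (ii) of Lemma \ref{l:ts=tl+1}.
\item Even granting that step, your count guarantees only $d-1$ unused indices, which gives nothing in the main case $d=1$.
\item The claim $\varphi(j)\ge t(S)$ is wrong: $8\in E(L_{13}(u))$ with $\varphi(8)=4<7$.
\item The claim $\prk(S)\ge 2t(S)-2$ is wrong: $\prk(S_{16}(u))=8<12$.
\item The exponent comparison is no substitute. Lemma \ref{l:exp} only covers $t(L)\le 7$, and even there, for $L=L_{12}(q)$ and $S=S_{16}(u)$, Table \ref{tab:exp} yields merely $u^{44}\lesssim q^{47}$.
\end{itemize}

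The missing idea is structural rather than combinatorial: the excess $d$ must be absorbed by $K$ and $\overline G/S$. The paper itself only quotes \cite[Lemma 4.11]{26Sta.t}; the $q^2$ in the statement points to Lemma \ref{l:r in K}. Here is the mechanism.
\begin{itemize}
\item Let $\rho$ be a coclique of size $t(S)$ in $GK(S)$. Then $\rho\setminus(\pi(K)\cup\pi(\overline G/S))$ is a coclique in $GK(G)=GK(L)$. Indeed, an element of $G$ of order $rs$ with $r,s\notin\pi(K)\cup\pi(\overline G/S)$ maps to an element of order $rs$ lying in $S$.
\item Adjacency of the primes $r_j(u)$ with $j\ge 3$ depends only on their indices. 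So each $R_j(u)$ with $j\in E(\rho,S)$ may be represented by any of its primes.
\item Hence at least $d$ indices $j$ satisfy $R_j(u)\subseteq\pi(K)\cup\pi(\overline G/S)$.
\item For these indices, primes in $K$ divide $p(q^2-1)$ by Lemma \ref{l:r in K}. A prime $r_j(u)$ with $j\ge 3$ can divide $|\overline G/S|$ only through field automorphisms.
\item So $d$ pairwise coprime numbers $k_j(u)$ are forced into element orders of $L$ supported essentially on $\pi(p(q^2-1))$. Each is at least of order $u^4$, because $\varphi(j)\ge 4$ for every $j\in J(S)$ once $t(S)\ge 7$. This fails for $t(S)=6$, e.g.\ $6\in E(L_{11}(u))$.
\end{itemize}
Comparing these sizes is what produces a bound of the shape $u^{3d}<q^2$. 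Counting unused large classes of $S$ cannot replace it.
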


\begin{proof}
See \cite[Lemma 4.11]{26Sta.t}.
\end{proof}

\section{Proof of Theorems \ref{t:main} and \ref{t:l9l10}}\label{s:t12}

We begin with the proof of Theorem \ref{t:l9l10}. Suppose that $L=L_n^\varepsilon(q)$, where $n\in\{9,10\}$, $\varepsilon\in\{+,-\}$, and $q$ is a power of an odd prime $p$. Suppose that $\omega(G)=\omega(L)$ and $G$ is not an almost simple group with socle isomorphic to $L$. By Lemma \ref{l:reduction}, we have $$S\leq \overline G=G/K\leq \Aut S,$$ where $S$ is a simple classical group over a field of order $u$ and characteristic $v\neq p$. By Lemma \ref{l:rest on t(S)}, we have $t(S)=6$ or $t(S)=5$. In particular,  Table \ref{tab:5tL8} shows that the Lie rank of $S$ is at most $11$.

Let $i$ be an integer such that $5\leq i\leq n$ and $i\neq 6$. Then $r_i(\varepsilon q)$ is large with respect to $L$ by Table \ref{tab:tL}. By Lemma \ref{l:t(S)>=t(L)}, it follows that $R_i(\varepsilon q)\cap(\{v\}\cup \pi(K)\cup\pi(\overline G/S))=\varnothing$. 
In particular, $k_i(\varepsilon q)\in\omega(S)$. Furthermore, if $n=10$, then $k_5(q)k_{10}(q)\in \omega(S)$ since $k_5(q)k_{10}(q)\in\omega(L)$. Also $t(r_i(\varepsilon q),S)\geq 5$.

\begin{lemma}\label{l:u>3}
We may assume that  $q\geq 5$ and $u\geq 4$. Moreover, if $q=5$, then $u\geq 7$.
\end{lemma}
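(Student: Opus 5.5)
We split the statement into two independent claims, one about $q$ and one about $u$.

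\emph{Reducing to $q\geq 5$.} Since $q$ is odd, the only value to exclude is $q=3$. For $q=3$ and $n\in\{9,10\}$, the groups $L_9^\pm(3)$ and $L_{10}^\pm(3)$ are already known to be almost recognizable. We would cite the earlier papers on linear and unitary groups over small fields, as surveyed in \cite{23Survey}. If such a citation is unavailable, we would treat $q=3$ by the same argument given below, run with explicit numbers. So we may assume $q\geq5$.

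\emph{Ruling out $u\leq3$.} Here $S$ is one of the finitely many classical groups over $\mathbb F_2$ or $\mathbb F_3$ with $t(S)\in\{5,6\}$, read off from Table~\ref{tab:5tL8}. We compare $\pi(S)$ with $\pi(L)$ using the fact, established just before the lemma, that $k_i(\varepsilon q)\in\omega(S)$ for $5\leq i\leq n$ and $i\neq6$.
\begin{enumerate}
\item By Lemma~\ref{l:orders}, every element order of $S$ is at most $\frac{u}{u-1}u^{l}$, where $l\leq 11$ is the Lie rank of $S$. This gives the bound $3^{12}$ when $u\le 3$.
\item On the other hand, $k_9(\varepsilon q)\in\omega(S)$ (for $n=9,10$), and by \eqref{eq:ki} and Lemma~\ref{l:cyclotomic estimation}(iii) we have $k_9(\varepsilon q)\geq\Phi_9(\varepsilon q)/3>q^6/6$.
\item For $n=10$ we can do better: $k_5(q)k_{10}(q)\in\omega(S)$, and this is roughly $q^8/25$.
\end{enumerate}
Comparing the two sides shows that $q$ is bounded, say $q\leq 7$ or so. For the few remaining pairs $(q,S)$ we argue directly. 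Every prime of $k_i(\varepsilon q)$, for the relevant $i$, must divide $|S|$, and the set of primes dividing the orders of classical groups over $\mathbb F_2$ or $\mathbb F_3$ of rank at most $11$ is explicitly known. We would exhibit a primitive prime divisor $r_i(\varepsilon q)$ outside this set; for example, $r_9(\varepsilon5)$ or $r_7(\varepsilon q)$ is a prime not dividing $\prod_{j\le 22}(u^j-1)$ for $u=2,3$. Alternatively, Lemma~\ref{l:exp} comparing $\exp(L)=\exp(S)$ gives the same contradiction more uniformly: $\exp(S)\leq\beta(S)\,v\,u^{\gamma(S)}$ is tiny compared with $\frac{1}{\alpha(L)}p\,q^{\gamma(L)}$, with $\gamma(L)\geq28$, once $q\geq5$.

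\emph{The case $q=5$, $u\in\{4,5\}$.} Since $v\neq p=5$, the only possibility is $u=4$. The exponent comparison then reads
\[
\exp(S)\leq \beta(S)\cdot 2\cdot 4^{\gamma(S)}\quad\text{against}\quad \exp(L)\geq \frac{5\cdot 5^{\gamma(L)}}{\alpha(L)}.
\]
Table~\ref{tab:exp} is only set up for $t\leq7$, and both $L$ and $S$ have $t\in\{5,6\}$, so it applies. The main obstacle is this comparison when $S$ has larger rank, for instance $S$ of type $L_{12}^\pm(4)$ with $\gamma(S)=46$. In that case the exponent estimate fails, since $4^{46}>5^{28}$, and we fall back on primitive divisors. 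We check that some $r_i(\pm5)$ with $i\in\{7,8,9\}$, such as $r_7(5)=19531$ or $r_9(5)$, is not of the form $r_j(4)$ with $j\leq 2\cdot\prk S$, or else violates the large-prime constraint from Lemma~\ref{l:t(S)>=t(L)}(ii). This is a finite computation with explicit numbers.
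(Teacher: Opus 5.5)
\textbf{The gap is the exclusion of $q=3$.} Your treatment of $u\le 3$, and of $u=4$ when $q=5$, is essentially the paper's route. You bound element orders of $S$ by Lemma~\ref{l:orders}, compare with the size of $k_9(\varepsilon q)$, and settle the small cases by a prime of $R_9(\varepsilon q)$ that cannot divide $|S|$. The paper uses $829\in R_9(5)$ and $5167\in R_9(-5)$, both with $e(r,2)\ge 207$. Two minor points on that part:
\begin{itemize}
\item Your inequality $q^6/6<3^{12}$ leaves $q\le 11$, not $q\le 7$. So $q=9,11$ need the explicit check too, or the sharper bound $\frac{5}{18}q^6\le k_9(\varepsilon q)$ from Lemma~\ref{l:cyclotomic estimation}(i), which leaves $q\le 9$.
\item Your exponent ``alternative'' is false as stated. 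For $u=3$ and $S=L_{11}^\pm(3)$, Lemma~\ref{l:exp} only gives $\exp(S)\le 143\cdot 3^{43}$, which is far above $5^{29}/32\le \exp(L_9^\pm(5))$. So it bounds $q$; it does not give a contradiction for all $q\ge 5$.
\end{itemize}

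There is no earlier result covering $L_9^\pm(3)$ and $L_{10}^\pm(3)$. These groups have $t(L)=5$, which is exactly the case the introduction describes as left open and which Theorem~\ref{t:l9l10} proves. So $q=3$ must be ruled out inside this lemma.

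Your fallback, running ``the same argument with explicit numbers'', does not do this. That argument bounds $q$ once $u$ is small, by setting the growth of $k_9(\varepsilon q)$ in $q$ against an upper bound on element orders of $S$ in terms of $u$. With $q=3$ fixed, $k_9(\pm 3)$ is a fixed small number, and $k_9(\pm3)\in\omega(S)$ gives at most a lower bound on $u$. Nothing bounds $u$ from above.

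What is missing is the reverse comparison. By Lemma~\ref{l:exp}, $\exp(L)<64\cdot 3^{33}$, while $\exp(S)>\frac{2}{5}u^{20}$ for the possible $S$; hence $u\le 7$. Then for each $u\in\{2,4,5,7\}$ one checks that $\pi(S)$ contains a prime from $R_5(u)\cap\{31,71,2801\}$ or from $R_7(-u)\cap\{43,29,113\}$. Each such prime $r$ has $e(r,3)\ge 28$, whereas every $r\in\pi(L)\setminus\{3\}$ has $e(r,3)\le 20$. So $\pi(S)\not\subseteq\pi(L)$, a contradiction. Without this step the claim $q\ge 5$ is unproved.
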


\begin{proof}  
Suppose that $q=3$. Then $\exp(L)<64\cdot 3^{33}$ by Lemma \ref{l:exp}. By the same lemma and Table \ref{tab:5tL8} , we have $\exp(S)>\frac{2}{5}u^{20}$, and so $u\leq 7$. Now it is easy 
verify that there is  $r_5\in R_5(u)\cap \{31,71,2801\}$ and $r_7\in R_7(-u)\cap \{43,29,113\}$. Then $e(r_5,3)\geq 30$ and $e(r_7,3)\geq 28$ and, therefore, $\pi(S)\not\subseteq\pi(L)$, a contradiction.

To prove that $u\geq 4$, we use the fact that $k_9(\varepsilon q)\in\omega(S)$. Since $k_9(-5)=5167$, $k_9(5)=19\cdot 829$ and $e(r,u)\geq 207$ for $r\in\{5167, 829\}$ and $u\in\{2,3\}$, it follows that $q=5$ yields $u>4$. Similarly, if $q\in\{7,9\}$, then 
$$k_9(\varepsilon{q})\in\{
37\cdot 1063, 117307, 19\cdot37\cdot757,
 530713\}$$
and we find $r\in R_9(\varepsilon q)$ such that $e(r,u)\geq 531$ for $u\in\{2,3\}$
(note that $u\neq3$ if $q=9$ since $v\neq p$).  
If $q\geq 11$, then $$k_9(\varepsilon q)=\frac{\Phi_9(\varepsilon{q})}{(3,q-\varepsilon)}\geq\frac{11}{12}\cdot\frac{11^6}{3}=\frac{11^7}{36}$$ by \eqref{eq:ki} and Lemma \ref{l:cyclotomic estimation}(i). Applying Lemma \ref{l:orders}, we  conclude that $11^7/36<u^{12}/(u-1)$, whence $u>3$. 
\end{proof}

In what follows, we will repeatedly use the fact that  $r_n(\varepsilon q)$ and $r_{n-1}(\varepsilon q)$ lie in $\pi(S)$  and have disjoint neighborhoods in $GK(S)$ by Lemma \ref{l:disjoint}. Also we will bound $k_i(q)$ and $k_j(u)$ using Lemma \ref{l:cyclotomic estimation} and Lemma \ref{l:u>3}, in particular, we will use that $$k_9(\varepsilon{q})=\frac{\Phi_9(\varepsilon{q})}{(3,q-\varepsilon)}\geq\frac{5q^6}{6(3,q-\varepsilon)}\quad\text{and}\quad k_7(u)=\frac{\Phi_7(u)}{(7,u-1)}\leq\frac{4u^6}{3}.$$

\begin{lemma}  $S$ is neither symplectic nor orthogonal. 
\end{lemma}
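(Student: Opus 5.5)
We argue by contradiction: assume $S$ is symplectic or orthogonal over a field of order $u\geq 4$ and characteristic $v\neq p$. By Lemma~\ref{l:rest on t(S)} we have $t(S)\in\{5,6\}$, so Table~\ref{tab:5tL8} leaves only finitely many families. These are $S_{10}(u)$, $O_{11}(u)$, $S_{12}(u)$, $O_{13}(u)$, $O_{12}^-(u)$ when $t(S)=5$, and $S_{14}(u)$, $O_{15}(u)$, $O_{14}^\pm(u)$, $O_{16}^+(u)$ when $t(S)=6$. (The cases $O_{14}^-(2)$ and $S_{10}(2)$, $O_{11}(2)$ are excluded since $u\geq 4$.)

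The main tool is the pair $r_n(\varepsilon q)$, $r_{n-1}(\varepsilon q)$. Both are large with respect to $L$ and lie in $\pi(S)$. By Lemma~\ref{l:disjoint} they have disjoint neighborhoods in $GK(L)$, and since $\omega(G)=\omega(L)$ and $\pi(S)\subseteq\pi(G)$, every common neighbor in $GK(S)$ would also be a common neighbor in $GK(L)$. Hence they have disjoint neighborhoods in $GK(S)$ too, and Lemma~\ref{l:unique}(iii)--(v) pins them down.

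For $S_{2m}(u)$, $O_{2m+1}(u)$ or $O_{2m+2}^+(u)$ with $m\in\{5,7\}$ odd, we get $\{R_n(\varepsilon q),R_{n-1}(\varepsilon q)\}\subseteq R_{2m}(u)\cup R_m(u)$, with the two sets falling into different pieces. For $O_{14}^\pm(u)$ ($m=7$), item (iv) puts one prime in $R_7(\tau u)$ and the other in $R_{12}(u)$, except when $\tau u=5$, which is handled directly. For $m\in\{6\}$ even ($S_{12}$, $O_{13}$, $O_{12}^-$), item (v) gives one prime in $R_{12}(u)$ and the other in $R_5(\tau u)\cup R_{10}(u)\cup R_8(u)$.

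In each case we obtain a divisibility $k_a(\varepsilon q)\mid k_b(u)$, and in the main subcases two of them: $k_9(\varepsilon q)$ and $k_8(\varepsilon q)$, respectively $k_{10}(\varepsilon q)$ and $k_9(\varepsilon q)$, each dividing $k_{2m}(u)$ or $k_m(\pm u)$. Comparing sizes with Lemma~\ref{l:cyclotomic estimation} yields inequalities of the form $q^{\varphi(a)}\lesssim u^{\varphi(b)}$. For example, $k_9(\varepsilon q)\geq 5q^6/18$ and $k_8(\varepsilon q)\approx q^4$, while $k_{10}(u),k_5(u)\lesssim u^4$ and $k_7,k_{14}\lesssim u^6$.

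The contradiction then comes from exponents. Lemma~\ref{l:exp} gives $\exp(L)\approx p\,q^{28}$ or $p\,q^{32}$, while $\exp(S)\approx v\,u^{20}$, $v\,u^{24}$, $v\,u^{30}$ or $v\,u^{36}$. Since $\exp(G)=\exp(L)$ and $\exp(S)$ divides $\exp(G)$, we need $\exp(S)\leq\exp(L)$. There is a subtlety: the factors $|K|$ and $|\overline G/S|$ contribute to $\exp(G)$. Lemma~\ref{l:r in K} and Lemma~\ref{l:G/S-linear} bound these contributions to primes dividing $p(q^2-1)$ and a few small primes, so the comparison still gives a lower bound for $u$ in terms of $q$. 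An upper bound comes from $k_9(\varepsilon q)$ (or $k_{10}$) dividing a $k_b(u)$ with $\varphi(b)$ small. For instance, if $k_9(\varepsilon q)\mid k_{10}(u)$ then $q^6\lesssim u^4$, and combining this with $\exp$ forces $u^{20}\lesssim q^{28}$, so the two bounds must be checked against each other. A cleaner alternative is to use orders: $k_{10}(u)k_{5}(u)$-type products versus $k_9(\varepsilon q)k_8$ products, together with Lemma~\ref{l:orders} applied to $L$ (element orders $<q^{n-1}\cdot q/(q-1)$), since $k_{2m}(u)\in\omega(S)$ is of size $u^{\varphi(2m)}$.

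The main obstacle is case balancing where the size estimates alone do not conflict. When the two primes land in $R_{14}(u)$ and $R_7(u)$ for $S_{14}$, $O_{15}$ or $O_{16}^+$, $k_9(\varepsilon q)\mid k_{14}(u)$ is numerically consistent. There we would bring in a third large prime, say $r_7(\varepsilon q)$ or $r_5(\varepsilon q)$. It is nonadjacent to both of the chosen primes in $GK(L)$, so it must be placed in $E(S)$ with $t(r,S)\geq5$, and in the $t(S)=t(L)$ case we can use Lemma~\ref{l:ts=tl}. We then count: $E(L)$ supplies four or five distinct $i$ with pairwise coprime $k_i(\varepsilon q)$ of sizes about $q^{\varphi(i)}$, all dividing distinct $k_j(u)$ with $j\in J(S)$. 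Comparing products, $\prod k_i(\varepsilon q)\leq\prod k_j(u)$, against the exponent bound should close each case. I expect the $t(S)=6$ orthogonal cases to need the most care.
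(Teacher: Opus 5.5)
\textbf{Verdict: genuine gap.} Your framework is the paper's: Lemma~\ref{l:unique} applied to $r_n(\varepsilon q)$ and $r_{n-1}(\varepsilon q)$, cyclotomic size bounds, and $\exp S\le\exp L$ from Lemma~\ref{l:exp}. But the proposal stops exactly where the work begins, and it puts the difficulty in the wrong place.

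The family $S_{14}(u)$, $O_{15}(u)$, $O_{16}^+(u)$ is not the obstacle. For $n=9$, $k_9(\varepsilon q)\mid k_7(\pm u)$ gives $q^6<\frac{24}{5}u^6$, while $\frac25u^{36}\le\exp S\le\exp L\le 86q^{29}$. Hence $q\le 7$, then $u\le 5$, contradicting Lemma~\ref{l:u>3}; no third prime is needed. Two smaller corrections: there is no subtlety with $K$ and $\overline G/S$, since $\exp S$ divides $\exp G$ simply because $S$ is a section of $G$. Also, it is the exponent inequality that bounds $u$ from above and the divisibility that bounds it from below, not the reverse.

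The genuinely tight cases are those where the degrees balance, and there your tools, as stated, do not suffice.

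\emph{The case $n=10$.} Take $S\in\{S_{10}(u),O_{11}(u)\}$. Lemma~\ref{l:unique} gives at best $k_9(\varepsilon q)\mid k_5(\pm u)$, i.e.\ $q^{30}\lesssim u^{20}$, which is perfectly compatible with $u^{20}\lesssim q^{33}$. The case $O_{14}^\tau(u)$ is similar: $q^{30}\lesssim u^{30}\lesssim q^{33}$. The missing idea is that $k_5(q)k_{10}(q)\in\omega(S)$ and that $r_5(\varepsilon q)$ is \emph{adjacent} to $r_{10}(\varepsilon q)$, so your claim that the third prime is nonadjacent to both chosen primes is false here. Consequently the whole product $k_5(q)k_{10}(q)$ divides the same $k_j(u)$ as $k_{10}(\varepsilon q)$. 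This follows from Lemma~\ref{l:ts=tl} when $t(S)=5$, and from Lemma~\ref{l:disjoint}(ii) together with $t(r,S)<5$ for the remaining neighbours of $r_7(\tau u)$ or $r_{12}(u)$ when $t(S)=6$. It upgrades the bound to $q^8\lesssim u^{\varphi(j)}$ and closes these cases.

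\emph{The case $S=O^\tau_{14}(u)$, $n=9$.} You get $k_9(\varepsilon q)\mid k_7(\tau u)$ and $k_8(q)\mid k_{12}(u)$, i.e.\ only $q\lesssim u$. Against $\frac27u^{30}\le 86q^{29}$ this yields merely $q<5.5\cdot 10^4$. Your product count does not rescue it. Since $t(S)=6$, the bound $t(r,S)\ge 5$ does not put $r$ into $E(S)$, and Lemma~\ref{l:ts=tl} is unavailable. Even granting a placement, the degree count can come out balanced again ($q^{20}$ against $u^{20}$). The paper needs a finer argument here:
\begin{enumerate}
\item By Lemma~\ref{l:disjoint}(ii), $(u^6+1)(u+\tau)/(4,u-\tau)$ divides $(q^8-1)/(9,q-\varepsilon)$, which rules out $k_8(q)=k_{12}(u)$.
\item All prime divisors of $k_{12}(u)$ are $\equiv 1\pmod{12}$, so $k_8(q)\le k_{12}(u)/13$.
\item Hence $6q^4<u^4$, which contradicts the exponent bound.
\end{enumerate}

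\emph{Remaining cases.} For $S_{10}(u)$, $O_{11}(u)$ with $n=9$, you similarly need Lemma~\ref{l:ts=tl} to push $k_7(\varepsilon q)$ or $k_5(\varepsilon q)$ into $k_3(u)$ or $k_6(u)$. Finally, the residual small cases require explicit arithmetic checks: $O^+_{14}(5)$; $q=9$ with $u\in\{29,31\}$; and $q\in\{5,7\}$ with $u\le 13$.
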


\begin{proof}

Suppose that $S\in\{O_{15}(u), S_{14}(u), O^+_{16}(u)\}$. By Lemma \ref{l:unique}, it follows that one of  $k_n(\varepsilon q)$ and $k_{n-1}(\varepsilon q)$ divides $k_7(u)$, while the other divides $k_{14}(u)$.

Let $n=9$. Then $k_9(\varepsilon q)$ divides $k_7(\tau u)$ for some $\tau\in\{+,-\}$. Hence $5q^6/18<4u^6/3$, or, equivalently, \begin{equation}\label{e:div}
q^6<(24/5)\cdot u^6.    
\end{equation} By Lemma \ref{l:exp}, we have  
\begin{equation}\label{e:ex} \frac{2}{5}u^{36}\leq\exp S\leq\exp L\leq 86q^{29}.\end{equation} 
Thus $q^{36}<(24/5)^6u^{36}<(24/5)^6\cdot 5\cdot 43q^{29}$, whence $q\leq 7$. Then \eqref{e:ex} yields $u\leq 5$ and, in turn, \eqref{e:div} yields $q=5$. This contradicts Lemma \ref{l:u>3}. 

Let $n=10$. Then $k_{10}(\varepsilon q)$ divides $k_{7}(\tau u)$ for some $\tau\in\{+,-\}$. By Lemma \ref{l:disjoint}(ii), only divisors of $(u^7-\tau)/(2,u+\tau)$ are adjacent to $r_{7}(\tau u)$ in $GK(S)$. Since $t(r_{5}(\varepsilon q),S)\geq 5$ and $t(r,S)<5$ for all $r\in \pi(u-\tau)$, it follows that $k_{10}(q)k_5(q)$ divides $k_{7}(\tau u)$, which yields $q^8/5<4u^6/3$. On the other hand, $2u^{36}<5\cdot 64q^{33}$ by Lemma \ref{l:exp}. Thus $q^{48}<(20/3)^6u^{36}<
(20/3)^6\cdot 5\cdot 32q^{33}$, whence $q<3$.

Suppose that $S=O^\tau_{14}(u)$ and assume for a while that $\tau u\neq +5$. Applying Lemma \ref{l:unique}, we see that one of $k_n(\varepsilon q)$ and $k_{n-1}(\varepsilon q)$ divides $k_7(\tau u)$, while the other divides $k_{12}(u)$. Suppose that $k_9(\varepsilon q)$ divides $k_{12}(u)$. Then 
$5q^6/18<u^4-u^2+1<u^4$ and hence $q^6<(18/5)u^4$. By Lemma \ref{l:exp}, we have $2u^{30}<7\cdot 64q^{33}$. Thus $q^{90}<(18/5)^{15}u^{60}<(18/5)^{15}(7\cdot 32)^2q^{66}$, whence $q=3$, a contradiction.

Let $n=10$. Then $k_{10}(\varepsilon q)$ divides $k_{12}(u)$. 
By Lemma \ref{l:disjoint}(ii), only prime divisors of $(u^6+1)(u+\tau)$ are adjacent to $r_{12}(u)$ in $GK(S)$. If $r\in \pi((u^2+1)(u+\tau))$ and $rs\not\in\omega(S)$, then $s\neq 2,v$ by Lemma \ref{l:adj_criteria1} and $e(s,\tau u)\in\{7, 5\}$ by Lemma \ref{l:adj_criteria}, so $t(r,S)\leq 3$. It follows that  $k_{10}(q)k_5(q)$ divides $k_{12}(u)$. So $q^8/5<u^4$ and, as above, this contradicts $2u^{30}<7\cdot 64q^{33}$.

Let $n=9$. Then $k_8(q)$ divides $k_{12}(u)$ and therefore $(u^6+1)(u+\tau)/(4,u-\tau)$ divides $(q^8-1)/(9,q-\varepsilon)$ by Lemma \ref{l:disjoint}(ii). Suppose that $k_8(q)=k_{12}(u)$. Then $(q^4-1)/2=u^2(u^2-1)$ and $(u^2+1)(u+\tau)/(4,u-\tau)$ divides $2(q^4-1)/(9,q-\varepsilon)$. Since $k_4(u)=(u^2+1)/(2,u-\tau)$ is odd, it follows that $k_4(u)$ divides $u^2(u^2-1)$, a contradiction. Thus $k_8(q)$ is a proper divisor of $k_{12}(u)$. Every prime divisor of $k_{12}(u)$ is equal to $1$ modulo $12$, so $k_8(q)\leq k_{12}(u)/13$, which yields $6q^4<u^4$. This contradicts the inequality $2u^{30}<7\cdot 86q^{29}$.

If $S=O^+_{14}(5)$, then one of the numbers $k_n(\varepsilon q)$ and $k_{n-1}(\varepsilon q)$ is equal to the prime $k_7(5)=19531$. Since $k_7(5)\not\equiv 1\pmod 8$, in fact,
$k_9(\varepsilon q)=k_7(5)$ or $k_{10}(\varepsilon q)=k_7(5)$. It is easy to check that this is impossible. 

Suppose that $S\in\{S_{10}(u), O_{11}(u)\}$. Then by Lemma \ref{l:unique}, one of $k_n(\varepsilon q)$ and $k_{n-1}(\varepsilon q)$ divides $k_5(u)$, while the other divides $k_{10}(u)$. 

If $n=10$, then $k_{10}(q)$ divides $k_5(\tau u)$ for some $\tau\in\{+,-\}$. Since $r_5(q)$ is large with respect to $S$ by Lemma \ref{l:ts=tl} and $J(S)=E(S)$, it follows that 
$k_{10}(q)k_5(q)$ divides $k_5(\tau u)$. Hence $q^8/d<4u^4/3$, where $d=(5,q^2-1)$. On the other hand, $u^{20}<5\cdot 32q^{33}$. Thus $q^{40}<(4d/3)^5u^{20}<(4d/3)^5\cdot 5\cdot 32q^{33}$, whence $q\leq 7$. Then $d=1$ and $q<3$. 

If $n=9$, then one of $k_7(\varepsilon q)$ and $k_5(\varepsilon q)$
divides $k_j(u)$ for some $j\in\{3,6\}$ by Lemma \ref{l:ts=tl}. Observe that $q^6/7>q^4$ and so $k_7(\varepsilon q)>5q^6/(6\cdot 7)>5q^4/6$.  Hence $5q^4/6d<k_j(u)\leq4u^2/3$, where $d=(5,q-\varepsilon)$. By exponents, we have $q^{40}<(8d/5)^{10}u^{20}<(8d/5)^{10}\cdot 5\cdot 43q^{29}$, whence $q\leq 9$. If $d=1$, then $q<3$ and so $q=9$. Solving the system $q^4<8u^2$,
$u^{20}<5\cdot 43\cdot q^{29}$, we deduce that $u=29,31$, and then $17\cdot193=k_8(q)\not\in \omega(S)$.

Suppose, finally, that $S\in\{S_{12}(u), O_{13}(u), O^-_{12}(u)\}$. Then one of $k_n(\varepsilon q)$  and $k_{n-1}(\varepsilon q)$ divides $k_{12}(u)$, while the other divides $k_{5}(\tau u)$ for some $\tau\in\{+,-\}$ or $k_8(u)$. 

If $n=10$, then $k_{10}(q)k_5(q)$ divides $k_j(u)$ for $j\in\{12,10,5,8\}$ and so $q^8/d<4u^4/3$, where $d=(5,q^2-1)$. This leads to a contradiction as in the case $S=S_{10}(u)$. 

If $n=9$, then $5q^6/6d<4u^4/3$, where $d=(3,q-\varepsilon)$. So $q^{36}<(8d/5)^6u^{24}<(8d/5)^{6}\cdot 4\cdot 43q^{29}$, whence $q\leq 7$.  If $d=1$, then $q=3$, and therefore $q=5,7$. Then $u\leq 13$ and we get a~contradiction verifying that $k_8(q)\not\in\omega(S)$.
\end{proof}

\begin{lemma}  $S\neq L_m^\tau(u)$. 
\end{lemma}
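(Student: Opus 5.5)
Assume $S=L_m^\tau(u)$ and aim for a contradiction. Since $t(S)\in\{5,6\}$, Table~\ref{tab:5tL8} gives $m\in\{9,10,11,12\}$, with $L_{11}(2)$ as the only small exceptional case; Lemma~\ref{l:u>3} ($u\geq4$) excludes it. The primes $r_n(\varepsilon q)$ and $r_{n-1}(\varepsilon q)$ lie in $\pi(S)$ and have disjoint neighbourhoods in $GK(S)$ by Lemma~\ref{l:disjoint}. Lemma~\ref{l:unique}(i),(ii) then says which primitive-divisor sets of $\tau u$ they belong to:
\begin{itemize}
\item for $m$ odd, one of $k_n(\varepsilon q)$, $k_{n-1}(\varepsilon q)$ divides $k_m(\tau u)$;
\item for $m=9$, the other divides $k_8(\tau u)k_7(\tau u)$, since $3\mid 9$;
\item for $m$ even, one of them divides $k_{m-1}(\tau u)$, and the other divides $k_m(\tau u)$ unless $u-\tau$ divides $m$.
\end{itemize}
I will go through the four values of $m$, and inside each split by $n\in\{9,10\}$. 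Throughout I compare sizes: Lemma~\ref{l:cyclotomic estimation} bounds $k_i(\varepsilon q)$ below by roughly $q^{\varphi(i)}$ and $k_j(\tau u)$ above by roughly $u^{\varphi(j)}$, and Lemma~\ref{l:exp} gives $\exp S\leq \exp L$.

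First, the case $t(S)=6$, i.e.\ $m\in\{11,12\}$. Here Lemma~\ref{l:d-estim} applies with $d=1$ and gives $u^3<q^2$. On the other hand, Lemma~\ref{l:exp} gives $\exp S\geq u^{42}/118$ or $u^{46}/15$, against $\exp L\leq 86q^{29}$ or $64q^{33}$. The exponent bound only gives $u\gtrsim q^{29/42}$ or $q^{33/46}$, both slightly above $q^{2/3}$. To close the gap I will add a divisibility constraint. All primes in $R_i(\varepsilon q)$ for $5\le i\le n$, $i\neq 6$, are large in $L$, so by Lemma~\ref{l:t(S)>=t(L)}(ii) each has $t(r,S)\geq5=t(S)-1$ and hence lies in some $R_j(\tau u)$ with $j>m/3$ by Lemma~\ref{l:tri}. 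In particular $k_n(\varepsilon q)$ or $k_{n-1}(\varepsilon q)$ divides $k_{11}(\tau u)$ (a prime of order $\ge 23$, size about $u^{10}$), or $k_{11}(\tau u)$ resp.\ $k_{12}(\tau u)$. Since primitive divisors of $\tau u$ of order $j$ are $\equiv1\pmod j$, I will extract a proper-divisibility factor that pushes the bounds into contradiction. I expect this to be routine but fiddly, with small $q$ left for a direct check that $k_8(q)$ or $k_7(\varepsilon q)$ is not in $\omega(S)$.

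Second, the case $t(S)=5$, i.e.\ $m\in\{9,10\}$. Lemma~\ref{l:ts=tl} says each $k_i(\varepsilon q)$ with $i\in J(L)$ divides some $k_j(\tau u)$ with $j\in E(S)$, or a product over $J(S)\setminus E(S)=\{m/2,m\}$. Both $E(L)$ and $E(S)$ consist of the indices $j$ with $m/2<j\le m$, with $6$ playing a special role in the unitary case. I will compare the largest pieces:
\begin{itemize}
\item $k_9(\varepsilon q)$ has size about $q^6$ and must land in some $k_j(\tau u)$ of size about $u^{\varphi(j)}\le u^8$;
\item $k_{10}(\varepsilon q)k_5(\varepsilon q)$, which lies in $\omega(L)$, has size about $q^8$ and, for $n=10$, must land in a single element order.
\end{itemize}
Combined with $\exp S\approx u^{28}$ or $u^{32}$ against $\exp L\approx q^{28}$ or $q^{32}$, these should force $u$ and $q$ to be nearly equal. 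The real difficulty is then a matching argument: an injective correspondence $i\mapsto j$ from $E(L)$ to $E(S)$ with $k_i(\varepsilon q)\mid k_j(\tau u)$, plus the counting of degrees $\varphi(i)$ versus $\varphi(j)$, should force $\varphi(i)\le\varphi(j)$ for every matched pair. Multiplying these over the coclique and comparing with the exponent bounds should force $j=i$ and essentially $k_i(\varepsilon q)=k_i(\tau u)$ for all large $i$. I then expect Zsigmondy-type uniqueness to give $q=u$ and $\varepsilon=\tau$, contradicting $v\neq p$. This matching step is the main obstacle. If a clean general argument fails, I will fall back on the congruences $r\equiv1\pmod j$ to show the divisibilities are proper, which gains a factor of $j+1$ each time and breaks the near-equality; the leftover small $q$ are then excluded by explicit factorisations, as in Lemma~\ref{l:u>3}.
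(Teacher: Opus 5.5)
Your proposal has genuine gaps in both halves, and they are quantitative rather than cosmetic. In the case $t(S)=6$, Lemma~\ref{l:d-estim} requires $t(S)\geq 7$, so $u^3<q^2$ is not available. Moreover, $\exp S\leq \exp L$ bounds $u$ from \emph{above}, not from below: for example $u^{42}<118\cdot 43\,q^{29}$ for $(n,m)=(9,11)$ and $u^{42}<118\cdot 32\,q^{33}$ for $(10,11)$. A lower bound on $u$ can only come from a divisibility $k_i(\varepsilon q)\mid k_j(\tau u)$. The one you single out for $m=11$, into $k_{11}(\tau u)$ of size about $u^{10}$, gives only $u^{10}\gtrsim q^{6}$, which is compatible with those upper bounds. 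A constant factor gained from $r\equiv 1\pmod{11}$ cannot repair a mismatch of exponents. What is needed is to push a degree-$6$ factor into a cyclotomic factor of degree at most $6$. The paper first shows that all primes in $R_7(\varepsilon q)\cup R_9(\varepsilon q)$ are large in $S$. Indeed, a prime $r$ there with $t(r,S)<6$ would lie in $R_5(\tau u)$. Then $r_4(q)$, being coprime to $|K|\cdot|\overline G/S|$ and nonadjacent to $r$, is forced to be large in $S$ and adjacent there to $r_5(\varepsilon q)$ and $r_8(\varepsilon q)$; this yields $r_5(\varepsilon q)r_8(\varepsilon q)\in\omega(S)\setminus\omega(G)$. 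Since $r_7$ and $r_9$ are nonadjacent, at most one of $k_7(\varepsilon q)$, $k_9(\varepsilon q)$ can go to $k_{11}(\tau u)$. So the other divides some $k_j(\tau u)$ with $6\leq j\leq 10$, giving $5q^6/(6d)<4u^6/3$ with $d\leq 7$, which the exponent bound kills. For $m=12$ your framework does suffice: unless $u-\tau$ divides $12$, one of $k_n(\varepsilon q)$, $k_{n-1}(\varepsilon q)$ divides $k_{12}(u)<u^4$.

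In the case $t(S)=5$, the degree-counting step you describe does dispose of the mixed situations. An example is one of $k_5(q)k_{10}(q)$, $k_7(\varepsilon q)$, $k_9(\varepsilon q)$ landing in $k_j(\tau u)$ with $j\in\{5,6,8\}$ when $m=9$. The plan breaks down in the degree-preserving configurations: $k_9(\varepsilon q)\mid k_9(\tau u)$ for $n=m=10$, and $k_i(\varepsilon q)\mid k_i(\tau u)$, $i=7,8,9$, for $n=m=9$. There the exponent bound carries the factor $p$, which may be as large as $q$, so it only gives $u^{32}<192\,q^{33}$, respectively $u^{28}<1376\,q^{29}$. Properness of divisibilities can only give a lower bound of the form $u\geq cq$ with a constant $c$, and that is consistent with these bounds for large $q$. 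So neither multiplying over a coclique nor the congruence fallback reaches a contradiction, and there is no ``Zsigmondy-type uniqueness'' that would force $q=u$ and $\varepsilon=\tau$.

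The missing ingredient is Lemma~\ref{l:disjoint}(ii) applied to $S$. The element orders of $S$ divisible by a prime of $R_9(\tau u)$ are $(u^9-\tau)/((9,u-\tau)(u-\tau))$ for $m=9$ and $(u^9-\tau)/(10,u-\tau)$ for $m=10$. They lie in $\omega(L)$ and are divisible by $r_9(\varepsilon q)$, hence divide the corresponding $m_i(L)$. This gives an upper bound with the \emph{same} exponent as the lower bound: $u^8<36q^8$ for $n=m=9$ and $u^9\leq 5q^9+6$ for $n=m=10$. These contradict, respectively, $8q^4<u^4$ and $3q^6<u^6$. The first holds because $k_8(q)$ is a proper divisor of $k_8(u)$, all of whose prime divisors are at least $17$; the second comes from \cite[Lemma~2.5]{17Sta}. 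The comparison of $27q^{18}$ with $(5q^9+6)^2$ shows how little room there is.
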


\begin{proof} Assume the opposite. Observe that every $r\in\pi(S)$ not adjacent to $2$ in $GK(S)$ lies in $R_m(\tau u)$ or $R_{m-1}(\tau u)$ by Lemma \ref{l:adj_criteria1}.

Let  $S=L_{10}^\tau(u)$.
Suppose that  $k_9(\varepsilon q)$ divides $k_{10}(\tau u)$. Then $5q^6/18<4u^4/3$. Since $2u^{32}<6\cdot 64q^{33}$ by Lemma \ref{l:exp}, it follows that $q^{48}<(8d/5)^8u^{32}<(8d/5)^8\cdot 6\cdot 32q^{33}$, whence $q=3$, a contradiction.  

Suppose that $k_9(\varepsilon q)$ divides $k_9(\tau u)$. By \cite[Lemma 2.5]{17Sta}, we have $3q^6<u^6$. If $n=10$, then $(u^9-\tau)/(10,u-\tau)$ divides $(q^9-\varepsilon)/(10,q-\varepsilon)$ by Lemma \ref{l:disjoint}(ii), which yields $u^9\leq 5q^9+6$. So $27q^{18}<u^{18}<(5q^9+6)^2$, whence $q<3$. Similarly, if $n=9$, then $(u^9-\tau)/(10,u-\tau)$ divides $(q^9-\varepsilon)/((9,q-\varepsilon)(q-\varepsilon))$ and so $u^9<20q^8$. Thus $3^3q^{18}<u^{18}<20^2q^{16}$, whence $q=3$, a contradiction. 

Since $k_9(\varepsilon q)$ divides neither $k_9(\tau u)$ nor $k_{10}(\tau u)$, by Lemma \ref{l:unique}, we conclude that $u-\tau$ divides $10$ and either $k_8(q)$ or $k_{10}(\varepsilon q)$ divides $k_9(\tau u)$. So $\tau u=+11,-9,-4$ and taking prime divisors of $k_9(\tau u)$ that are congruent 1 modulo $8$ or $10$, we are left with the equation $k_8(q)=9^6-9^3+1$, which has no solutions. 

Let $S=L_{12}^\tau(u)$. Suppose that either $k_n(\varepsilon q)$ or $k_{n-1}(\varepsilon q)$ divides $k_{12}(u)$. Using that $k_8(q)>q^4/2$ and $k_{10}(\varepsilon q)>5q^4/6(5,q+\varepsilon)$, we infer that $q^4<du^4$, where $d=\max\{2, 6(5,q+\varepsilon)/5\}$. Since $2u^{46}<15\cdot 64q^{33}$, we have $q^{46}<d^{23/2}\cdot 15\cdot 32q^{33}$, whence $q\leq 7$. Then $d=2$ and $q<3$.

By Lemma \ref{l:unique}, it follows that $u-\tau$ divides $12$ and one of $k_n(\varepsilon q)$ and $k_{n-1}(\varepsilon q)$ divides $k_{11}(\tau u)$. Arguing as in the case $m=10$, we 
derive a contradiction.

Let $S=L_{11}^\tau(u)$.  Suppose that there is $i\in\{7,9\}$ and $r\in R_i(\varepsilon q)$ such that
$t(r,S)<6$. Then by Lemma \ref{l:ts=tl+1}, we have that $r\in R_5(\tau u)$ and $r_8=r_8(\varepsilon q)$, $r_5=r_5(\varepsilon q)$ are large with respect to $S$. Note that $5r, vr\in\omega(S)$ and hence $v, 5\not\in R_4(q)$. By Lemmas \ref{l:r in K} and \ref{l:G/S-linear}, it follows that $r_4=r_4(q)$ is coprime to $|K|\cdot |\overline G/S|$, so $r_4\in\pi(S)$, $rr_4\not\in\pi(S)$ and $t(r_4,S)\geq 4$. This implies that $r_4$ is large with respect to $S$. Since $r_4$ is adjacent to both $r_5$ and $r_8$ in $GK(S)$ and $J(S)=E(S)$,  we have $e(r_8,u)=e(r_4,u)=e(r_5,u)$ Lemma \ref{l:tL}, whence $r_5r_8\in\omega(S)\setminus\omega(G)$. 

Thus there is $i\in\{7,9\}$ such that $k_i(\varepsilon q)$ divides $k_j(\tau u)$ for some $6\leq j\leq 10$, and therefore $5q^6/6d<4u^6/3$, where $d=\max\{(3,q-\varepsilon), (7,q-\varepsilon)\}$. If $n=9$, then $\frac{2}{118}u^{42}<86q^{29}$, and so 
$q^{42}<(8d/5)^7\cdot 118\cdot 43 q^{29}$. This yields $q\leq 7$, whence $d\leq 3$ and then $q=3$. Similarly, if $n=10$, then $q^{42}<(8d/5)^7\cdot 118\cdot 32 q^{33}$ and, therefore, $q\leq 13$. If $\varepsilon q\neq -13$, then $d\leq 3$ and so $q\leq 7$. If $d=1$, then $q=3$.
Hence $\varepsilon q=-13,+7,-5$, $i=7,9,9$, respectively, and $u\leq 8$. Now it is a routine to check that $k_i(\varepsilon q)$ does not divide $k_j(\tau u)$.

Let, finally, $S=L_{9}^\tau(u)$. If $n=10$, then by Lemma \ref{l:ts=tl}, at least one of $k_5(q)k_{10}(q)$, $k_9(\varepsilon q)$, $k_7(\varepsilon q)$ divides $k_j(\tau u)$ for some $j\in \{8,6,5\}$. Hence $5q^6/6d<\frac{u}{u-1}u^4$, where $d=\max\{(7,q-\varepsilon), (3,q-\varepsilon)\}$. This yields $u\geq 7$ and so $q^6<(7d/5)u^4$. It follows that 
$q^{42}<(7d/5)^7u^{28}\leq (7d/5)^7\cdot 16\cdot 64\cdot q^{33}$, whence $q\leq 11$. Then $d\leq 3$ and $q=5$, $u=7,8$. Now we see that $313=k_8(5)\not\in\omega(S)$. 

Let $n=9$. Suppose that one of $k_9(\varepsilon q)$ and $k_7(\varepsilon q)$ divides $k_j(\tau u)$ for $j\in\{8,6,5\}$. Then, as above, $u\geq 7$ and $q^6<(7d/5)u^4$. Thus $q^{42}<(7d/5)^7u^{28}\leq (7d/5)^7\cdot 16\cdot 86\cdot q^{29}$, and we conclude that $q=5$. Then $d\leq 3$ and $q=3$, a contradiction.

Since $r_9(\varepsilon q)$ is  not adjacent to $2$ in $GK(L)$, it follows that $k_9(\varepsilon q)$ divides $k_9(\tau u)$ and then $k_7(\varepsilon q)$ divides $k_7(\tau u)$. Hence $k_8(q)$ divides $k_8(u)$. It is easy to see that $k_8(q)\neq k_8(u)$ and as prime divisors of $k_8(u)$ are at least $17$, we have $8q^4<u^4$. On the other hand, $(u^9-\tau)/((9,u-\tau)(u-\tau))$ divides $(q^9-\varepsilon)/((9,q-\varepsilon)(q-\varepsilon))$, which yields $u^8<36q^8$, contradicting the previous inequality. This completes the proof of the lemma and Theorem~\ref{t:l9l10}.
\end{proof}

Now we a give a formal proof of Theorem \ref{t:main} supplementing the historical summary in the introduction. Let $L=L_n^\varepsilon(q)$. If $n\leq 4$ or $q$ is even, then the claim follows from \cite[Theorem 2.1]{23Survey}. In the remaining cases, it suffices to show that $L$ is almost recognizable since the description of $G$ such $L\leq G\leq \Aut L$ and $\omega(G)=\omega(L)$ is given in \cite{17Gre}.
If $n\in\{5,7\}$ and $\varepsilon=+$, then see the main result in \cite{12GrLyt.t}. If $n=5$ and $\varepsilon=-$, or $n=6$, then see \cite[Corollary 2]{24GrPan.t}. If $n=7$ and $\varepsilon=-$, then see \cite{26Pan.t}. If $n=8$, then see \cite[Theorem 1]{26GreRod_arxiv}. If $n\geq 11$, then see \cite[Theorem 1]{26Sta.t}. 
Finally, if $n=9$ or $10$, then see Theorem \ref{t:l9l10}.

\section{Proof of Theorem \ref{t:main2}}\label{s:t3}

Besides Theorem \ref{t:main2}, in this section we prove the following assertion.
 
\begin{prop}\label{p:2} Let $L$ and $S$ be as in Lemma {\rm \ref{l:reduction}}. Then $0\leq t(S)-t(L)\leq 1$. 
\end{prop}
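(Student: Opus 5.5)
The plan is to reduce to one residual case. By Lemma~\ref{l:rest on t(S)}, $0\leq t(S)-t(L)\leq 1$ already holds when $L$ is linear or unitary with $t(L)=5$, and when $L$ is symplectic or orthogonal with $t(L)\geq 7$. For linear and unitary groups with $t(L)\geq 6$, the hypothesis of Lemma~\ref{l:reduction} is vacuous by \cite[Theorem 1]{26Sta.t}, since every group isospectral to such an $L$ is almost simple with socle $L$. So it remains to exclude $t(S)=t(L)+2$ when $L$ is symplectic or orthogonal with $t(L)\in\{5,6\}$. Assume this happens. Then $t(S)\in\{7,8\}$ and $d=2$, so Lemma~\ref{l:d-estim} applies and gives $u^{6}<q^2$, that is, $u^3<q$.

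I then compare a large element order of $L$ with the upper bound of Lemma~\ref{l:orders} for $S$. Table~\ref{tab:disjoint} and Lemma~\ref{l:disjoint}(ii) provide an element of $\mu(L)$ of size roughly $q^n$ with $n=\prk(L)$:
\begin{itemize}
\item for $S_{2n}(q)$, $O_{2n+1}(q)$ and $O_{2n}^-(q)$ with $n$ even, we have $(q^n+1)/2\in\omega(L)$;
\item for $O_{2n}^\varepsilon(q)$ with $n$ odd, the number $m_2(L)=(q^n-\varepsilon)/(4,q-\varepsilon)$ lies in $\mu(L)$.
\end{itemize}
In every case $\omega(L)=\omega(G)$ contains a number at least $q^n/4>u^{3n}/4$.

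Every element order of $G$ is at most $|\overline G/S|_{\max}\cdot(\text{element order of }S)\cdot\exp(K)$. To avoid the correction factors from $K$ and $\overline G/S$, I take the element of order about $q^n$ to be a primitive-divisor element: a $k_{2n}(q)$- or $k_n(\pm q)$-element. By Lemma~\ref{l:t(S)>=t(L)}(ii), primes in $R_{2n}(q)$, or in $R_n(\varepsilon q)$ in the odd case, are coprime to $v|K||\overline G/S|$. By Lemma~\ref{l:disjoint}(ii), the cyclic subgroup of order $m_i(L)$ maps into $S$ with bounded loss, since the $K$ and $\overline G/S$ parts of $m_i(L)$ can only involve the primes $2$ and divisors of $q^2-1$ with bounded exponents by Lemma~\ref{l:r in K}. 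If this bookkeeping gets messy, I will simply use that $S$ contains an element of order divisible by $k_{2n}(q)$, or by the corresponding full $R_n$-part, and bound that part by $q^{n-1}/c$.

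By Lemma~\ref{l:orders}, element orders of $S$ are less than $\frac{u}{u-1}u^{l}\leq 2u^{l}$, where $l$ is the Lie rank of $S$. Table~\ref{tab:5tL8} controls $l$:
\begin{itemize}
\item if $t(S)=7$, then $l\leq 13$, attained by $L^\pm_{14}$;
\item if $t(S)=8$, then $l\leq 15$, attained by $L^\pm_{16}$.
\end{itemize}
The case $t(L)=5$ has $n\in\{5,6\}$ and the case $t(L)=6$ has $n\in\{7,8\}$. In each case $3n$ exceeds $l$ by at least $2$, or by at least $1$ after using the primitive-part version. So the resulting inequality $u^{3n}/c<2u^{l}$ forces $u$ to be very small. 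Together with $u^3<q$ and the exponent bounds of Lemma~\ref{l:exp}, this leaves a finite list of small pairs $(q,u)$. I would rule these out by exhibiting a primitive prime of $S$ not in $\pi(L)$, as in Lemma~\ref{l:u>3}.

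The main obstacle is the tight case $n=5$ with $t(S)=7$, that is, $L\in\{S_{10}(q),O_{11}(q)\}$. There $\varphi(2n)=4$, so the primitive part $k_{10}(q)$ alone is only about $q^4$, which is too small. I would use the full element order $(q^5+1)/2$ and must control how the small-prime part of it (divisors of $q+1$) distributes between $K$, $\overline G/S$ and $S$. My first attempt is Lemma~\ref{l:reduction}(i): $r_{10}(q)$, $r_5(q)$ and a prime of $q+1$ can be placed in a common coclique of size at least $3$. If that fails, I fall back on combining Lemma~\ref{l:exp} with $u^3<q$: compare $\exp(S)\geq\frac{1}{\alpha}u^{\gamma}$, where $\gamma\geq 28$ for $t(S)=7$, against $\exp(L)\leq 20pq^{20}$.
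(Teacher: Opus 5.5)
\textbf{There is a genuine gap: your argument does not handle $t(L)=5$ with $S\in\{L^\pm_{13}(u),L^\pm_{14}(u)\}$.} The parts you share with the paper are fine. These are the reduction to $L$ symplectic or orthogonal with $t(L)\in\{5,6\}$ and $t(S)=t(L)+2$, and the bound $u^3<q$. The size comparison also settles two cases:
\begin{itemize}
\item $t(L)=6$: a $k_7(\pm q)$- or $k_{14}(q)$-element of order roughly $q^6>u^{18}$ lies in $S$, against the bound $2u^{15}$ (this is Lemma~\ref{l:68});
\item $t(L)=5$ when the Lie rank of $S$ is at most $10$.
\end{itemize}
The failure in the remaining case is not confined to $n=5$. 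For $t(L)=5$, every $i\in J(L)$ has $\varphi(i)\le 4$; for $n=6$, $J(L)=\{3,5,6,8,10,12\}$. So your assertion that the primitive part is of size about $q^{n-1}$ is false. The orders you can certify in $S$ are only about $q^4$, which is compatible with $u^{12}<q^4$ and with the bound $2u^{12}$ or $2u^{13}$ of Lemma~\ref{l:orders}.

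The full orders $(q^5+1)/2$ and $(q^6+1)/2$ do not rescue this, because your ``bounded loss'' claim is unjustified. Lemma~\ref{l:r in K} restricts which primes may divide $|K|$ but says nothing about exponents. It does not exclude odd primes of $q+1$ when $n=5$ (they have $t(r,L)=2$), nor $v\in R_4(q)$ when $L\in\{S_{12}(q),O_{13}(q)\}$. Nothing excludes $R_4(q)$ from $|\overline G/S|$ either. In the worst case only $k_{2n}(q)\approx q^4$ survives in $S$.

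Both fallbacks also fail. The coclique in your first attempt does not exist. Every odd prime divisor of $q+1$ divides $(q^5+1)/2\in\omega(L)$ together with $r_{10}(q)$. The prime $2$ divides $(q^5-1)/2$ or $(q^5+1)/2$ according as $q\equiv 1$ or $3\pmod 4$, so it is adjacent to $r_5(q)$ or to $r_{10}(q)$. The second fallback compares $\exp(S)\ge u^{\gamma}/\alpha$ with $\exp(L)$. Since $\exp(S)$ divides $\exp(L)$, this only bounds $u$ from above in terms of $q$, the same direction as $u^3<q$, so no contradiction can come from it.

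What is missing is a \emph{lower} bound on $u$ in terms of $q$. That requires knowing where the primitive divisors of $L$ sit inside $S$, and the paper obtains it as follows.
\begin{itemize}
\item For $S=L^\tau_{13}(u)$ with $u>3$, Lemma~\ref{l:ts=tl+1} gives $k_i(q)\mid k_j(\tau u)$ for some $i\in\{5,8,10\}$ and $7\le j\le 12$. Hence $\varphi(j)\le 10$ and $q^4/10<\frac{4}{3}u^{10}$, which together with $u^{12}<q^4$ forces $u<4$.
\item For $S=L^\tau_{14}(u)$ with $u>3$, the disjoint-neighbourhood Lemmas~\ref{l:s2n_even} and~\ref{l:s2n_odd} leave two options. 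Either $k_i(q)\mid k_{14}(\tau u)$, which is too small. Or $u-\tau\mid 14$ and $k_j(q)\mid k_{13}(\tau u)$, so $\tau u\in\{+8,-13\}$, which a congruence argument excludes.
\item For $u\le 3$, a finite check shows $r_{13}(\tau u)\notin\pi(L)$.
\end{itemize}
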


By Lemma \ref{l:rest on t(S)}, to prove Proposition \ref{p:2}, it remains to handle the case where $L$ is a symplectic or orthogonal group with $t(L)=5$ or $6$.  So to deal with Theorem \ref{t:main2} and Proposition \ref{p:2} together, we suppose that $L$ is a symplectic or orthogonal group over a field of order $q$ and characteristic $p\neq 2$ such that $5\leq t(L)\leq 6$. 

Let $G$ be a finite group such that $\omega(G)=\omega(L)$ and $G$ is not an almost simple group with socle isomorphic to $L$. By Lemma \ref{l:reduction}, we have $S\leq \overline G=G/K\leq \Aut S$, where $S$ is a simple classical group over a field of order $u$ and characteristic $v\neq p$.

Observe that by Lemma \ref{l:t(S)>=t(L)}, if $i\in J(L)$, then $R_i(q)\cap(\pi(K)\cup\pi(\overline G/S))=\varnothing$, $k_i(q)\in\omega(S)$ and  $t(r_i(q),S)\geq t(L)$. 

\begin{lemma}\label{l:68} 
If $t(L)=6$, then $t(S)\neq 8$. 
\end{lemma}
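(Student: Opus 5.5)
We argue by contradiction: assume $t(L)=6$ and $t(S)=8$. By Lemma~\ref{l:rest on t(S)}(ii), $t(S)\le 8$, so this is the extreme case. Here $d=t(S)-t(L)=2$ and $t(S)\ge 7$, so Lemma~\ref{l:d-estim} gives $u^6<q^2$, that is, $u^3<q$. So the field of $S$ is much smaller than that of $L$, while $S$ has larger rank. By Table~\ref{tab:5tL8}, $L$ is one of $S_{14}(q)$, $O_{15}(q)$, $O_{14}^\pm(q)$, $O_{16}^+(q)$ (so $n\in\{7,8\}$). Also, $S$ is one of $L^\pm_{15}(u)$, $L^\pm_{16}(u)$, $S_{18}(u)$, $O_{19}(u)$, $S_{20}(u)$, $O_{21}(u)$, $O_{20}^-(u)$.

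The plan is to play the large primes of $L$ against cyclotomic factors of $u$.

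\emph{Step 1.} Fix $i\in E(L)$ with $\eta(i)=n$ or close to it, for instance $i=2n$ or $i=n$ (from Table~\ref{tab:tL}). By Lemma~\ref{l:t(S)>=t(L)}(ii), $k_i(q)\in\omega(S)$ and $t(r_i(q),S)\ge 6$.

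\emph{Step 2.} Every prime divisor of $k_i(q)$ lies in some $R_j(u)$ with $j\le 2\,\prk(S)\le 42$. Hence $k_i(q)$ divides a product of $k_j(u)$ over $j$ in the relevant range, and in fact, since these primes are pairwise adjacent in $GK(L)$, over a set of $j$'s forming a clique. Using the element order bound of Lemma~\ref{l:orders}, this gives
\[
\tfrac12 q^{\varphi(i)}\le k_i(q)\le \tfrac{u}{u-1}\,u^{l},
\]
where $l$ is the Lie rank of $S$ ($l\le 10$ for the symplectic and orthogonal candidates, $l\le 15$ for the linear and unitary ones).

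\emph{Step 3.} Compare exponents. By Lemma~\ref{l:exp}, $\exp(L)\le \beta(L)\,p\,q^{\gamma(L)}$ with $\gamma(L)\in\{30,36\}$. On the other side, $\exp(S)$ is at least roughly $u^{\gamma(S)}$ with $\gamma(S)$ around $60$--$80$; since Table~\ref{tab:exp} only covers $t\le 7$, I would derive this lower bound directly from the products of the $k_j(u)$ dividing $\exp(S)$. Combining with $u^3<q$ yields only a lower bound on $q$, so this alone is not the contradiction.

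The real contradiction should come from Step 2. Take $i=2n$, so $\varphi(i)\ge 6$, and use that $k_{2n}(q)$ and $k_n(q)$ (or $k_{n-1}(\pm q)$ via Lemma~\ref{l:disjoint}) lie in disjoint-neighbourhood classes. By Lemma~\ref{l:unique}, each must divide a single $k_j(u)$ with $j$ maximal ($j\in\{m,m-1\}$ for linear and unitary $S$, $j\in\{2m,m\}$ otherwise). Then $q^{\varphi(i)}\lesssim u^{\varphi(j)}$. Multiplying over the two disjoint classes, or over all six classes of a maximal coclique (each of which must sit inside distinct $k_j(u)$ by Lemma~\ref{l:tL}), the product $\prod k_i(q)\approx q^{\sum\varphi(i)}$ with $\sum\varphi(i)$ around $30$ must divide $\exp(S)$.

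The hard direction is the reverse: $\exp(S)$ must divide $\exp(L)$, which bounds $u^{\gamma(S)}\lesssim q^{\gamma(L)}$. Any bound of the form $q^a\lesssim u^b$ with $a/b>1/3$ then contradicts $u^3<q$. So the key inequality to extract is $q^{\varphi(2n)}<c\,u^{\varphi(j)}$ for a single $k_{2n}(q)\mid k_j(u)$. With $\varphi(2n)\ge 6$ and $\varphi(j)\le 16$, this gives $q^6<c\,u^{16}<c\,q^{16/3}$, which forces $q$ small. The finitely many small $q$ then have to be excluded by hand, as in Lemma~\ref{l:u>3}, by exhibiting primitive divisors of $S$ not in $\pi(L)$.

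The main obstacle is ensuring that the relevant $k_{2n}(q)$ divides a \emph{single} $k_j(u)$ rather than a product. I would get this from Lemma~\ref{l:unique} applied to the disjoint pair $R_{2n}(q)$, $R_{n}(q)$ or $R_{n-1}(\epsilon q)$ (Lemma~\ref{l:disjoint}), handling the exceptional small $u\le 3$ cases separately via exponent estimates.
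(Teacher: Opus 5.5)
\textbf{Verdict: there is a gap, but your own Step~2 inequality, applied to the right $i$, closes it, and that is exactly the paper's proof.} You set Step~2 aside and commit instead to a route that fails as written.

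\emph{The choice of $i$ is wrong in two cases.} Your prescription $i=2n$ (or $i=n$) does not work uniformly. We have $r_{14}(q)\notin\pi(O_{14}^+(q))$ and $r_{16}(q)\notin\pi(O_{16}^+(q))$. For $O_{16}^+(q)$, the alternative $i=n=8$ gives only $\varphi(8)=4$, and $q^4\lesssim u^{15}$ is compatible with $u^3<q$. The uniform choice is $i\in\{7,14\}$. For each of $S_{14}(q)$, $O_{15}(q)$, $O_{14}^\pm(q)$, $O_{16}^+(q)$, at least one of $7$, $14$ lies in $J(L)$. Hence $k_7(\epsilon q)\in\omega(S)$ for a suitable $\epsilon$.

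\emph{The obstacle you name is not there.} You do not need $k_i(q)$ to divide a single $k_j(u)$. The number $k_7(\epsilon q)$ is itself an element order of $S$, so Lemma~\ref{l:orders} bounds it by $\frac{u}{u-1}u^{15}\le 2u^{15}$, however its prime divisors are distributed among the sets $R_j(u)$. Your bound $\frac12 q^{\varphi(i)}\le k_i(q)$ drops the factor $d=(7,q-\epsilon)$; keeping it, Lemma~\ref{l:cyclotomic estimation} gives $3q^6/4d<2u^{15}$. Since $6/15>1/3$, combining this with $q^6>u^{18}$ from Lemma~\ref{l:d-estim} yields $u^3<8d/3\le 56/3$, so $u=2$. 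Then $q\le 9$, hence $d=1$ and $u^3<8/3$, a contradiction. This needs no exponent comparison beyond Lemma~\ref{l:d-estim}, no use of Lemmas~\ref{l:disjoint} and~\ref{l:unique}, and no case-by-case check of small $q$.

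\emph{The detour is not self-sufficient.} Lemma~\ref{l:unique} requires $u>3$ outside item (v). Your plan to dispose of $u\le 3$ ``via exponent estimates'' cannot work: $\exp(S)\mid\exp(L)$ only bounds $q$ from below in terms of $u$. For fixed small $u$, an upper bound on $q$ must come from an upper bound on element orders of $S$, which is Step~2 again.
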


\begin{proof}
Assume the opposite. Then $u^6<q^2$ by Lemma~\ref{l:d-estim}.
By Tables~\ref{tab:5tL8} and \ref{tab:tL}, at least one of $7$ and $14$ lies in $J(S)$, so $k_7(\epsilon q)\in\omega(S)$ for some suitable $\epsilon$. Also by Table~\ref{tab:5tL8}, we see that the Lie rank of $S$ is at most $15$. 
Applying Lemma \ref{l:orders}, we conclude that $3q^6/4d<2u^{15}$, where $d=(7,q-\epsilon)$. Together with  $u^6<q^2$, this yields $u^3<8d/3$, whence $u=2$.  Then $q\leq 9$ and $d=1$, a contradiction.
\end{proof}

In what follows, $t(L)=5$, that is, $L\in\{S_{10}(q), O_{11}(q)\}$ or $L\in\{S_{12}(q), O_{13}(q), O_{12}^-(q)\}$.

\begin{lemma}\label{l:57} 
If $t(L)=5$, then $t(S)\neq 7$. 
\end{lemma}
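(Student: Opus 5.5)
We argue by contradiction, following the pattern of Lemma~\ref{l:68}: assume $t(L)=5$ and $t(S)=7$. Then $d=t(S)-t(L)=2$ and $t(S)\geq 7$, so Lemma~\ref{l:d-estim} gives $u^6<q^2$, that is, $u^3<q$. This inequality says $S$ lives over a field much smaller than that of $L$. We will play it against a lower bound on some $k_i(q)\in\omega(S)$ coming from Lemma~\ref{l:orders}. The argument is that of Lemma~\ref{l:68} with the indices adjusted.

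First we pick an index. For $L\in\{S_{10}(q),O_{11}(q)\}$ we have $n=5$, and Table~\ref{tab:tL} gives $E(L)=\{i\mid 5/2<\eta(i)\leq 5\}$, which contains $5$ and $10$. For $L\in\{S_{12}(q),O_{13}(q),O_{12}^-(q)\}$ the set $E(L)$ contains $5$, $10$ and $12$. In every case $5\in J(L)$. Lemma~\ref{l:t(S)>=t(L)}(ii) then shows that $R_5(q)$ avoids $v$, $\pi(K)$ and $\pi(\overline G/S)$, and that $k_5(q)\in\omega(S)$. (The same holds for $k_{10}(q)=k_5(-q)$, and we may use whichever is more convenient.) By \eqref{eq:ki} and Lemma~\ref{l:cyclotomic estimation}(i) with $k=5$, using $q\geq 5$ (we may also invoke $q\geq 3$ with $k=3$), we get $k_5(q)\geq \frac{5}{6}q^4/(5,q-1)$, and similarly for $-q$.

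Next we bound element orders of $S$. By Table~\ref{tab:5tL8}, the groups with $t(S)=7$ are $L^\pm_{13}$, $L^\pm_{14}$, $S_{16}$, $O_{17}$, $O^-_{16}$, $O^\pm_{18}$ and $O^+_{20}$. Their Lie rank is at most $13$. Lemma~\ref{l:orders} then gives
\[
\frac{5q^4}{6(5,q-1)}\leq k_5(q)<\frac{u}{u-1}u^{13}\leq 2u^{13}.
\]
Combining this with $u^6<q^2$, i.e. $q^4>u^{12}$, yields $u^{12}<\frac{12}{5}(5,q-1)\,u^{13}$. This is only the weak bound $u>5/(12\cdot 5)$, so the choice of $k_5$ is too weak and the step needs to be sharpened.

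This sharpening is the main obstacle. The fix is to use a larger index in $J(L)$ whose Zsigmondy part has higher degree. For $L$ with $n=6$ we use $k_{12}(q)=q^4-q^2+1>q^4/2$, which is still degree $4$. Better is $k_8(q)$ when $8\in E(L)$, which holds for $S_{12}$, $O_{13}$ and $O^-_{12}$; it gives degree $4$ again. All the $k_i$ here have degree $4$, so Lemma~\ref{l:orders} alone fails. Instead we would use the exponent: Lemma~\ref{l:exp} gives $\exp L\leq \beta(L)\,p\,q^{\gamma(L)}$ with $\gamma\leq 24$. We need a lower bound on $\exp S$ for $t(S)=7$, which is not in Table~\ref{tab:exp} for $L^\pm_{13}$ up to $O^+_{20}$; for the types not listed we would bound $\exp S$ from below directly, by the product of the $k_j(u)$ over the primitive indices. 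That product is roughly $u^{\gamma(S)}$, with $\gamma(S)\geq 56$ in the listed cases. This exponent route also fails once $u^3<q$, since $u^{56}$ can be below $q^{24}$.

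So the decisive step has to go the other way. Since $u<q^{1/3}$, each large prime $r_i(q)$ of $L$ lies in some $R_j(u)$ with $j\leq 2\cdot 10$. Then $k_i(q)$ divides a product of $k_j(u)$, which is at most about $u^{\varphi(j)}$. Moreover, we need $R_5(q)$, $R_{10}(q)$, $R_8(q)$ (or $R_{12}(q)$) and the remaining large primes to sit in distinct large classes of $S$, and Lemma~\ref{l:ts=tl+1} restricts where they can go. For linear and unitary $S$ the primes $r_{(m\pm1)/2}(\tau u)$ are forced, with $\varphi\leq 14$. The estimate $q^4\lesssim k_i(q)\leq k_j(u)<2u^{\varphi(j)}$ with $\varphi(j)\leq 12$ gives $q^4<2u^{12}<2q^4$. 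This is borderline, and we expect to close it with the extra factor $1/(j+1)$ in $k_i(q)\leq k_j(u)/(j+1)$ whenever the divisibility is proper (since every prime in $R_j(u)$ is $\equiv 1 \pmod j$), together with a small-case check when $k_i(q)=k_j(u)$.
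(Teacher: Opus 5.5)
Your proposal is not yet a proof. The last paragraph ends with ``we expect to close it'', and the estimate you set up there does not close.

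\textbf{The Lie-rank bound.} The first wrong turn is bounding every candidate $S$ by Lie rank $13$. Only $L^\pm_{13}(u)$ and $L^\pm_{14}(u)$ have Lie rank that large. The symplectic and orthogonal groups with $t(S)=7$ ($S_{16}$, $O_{17}$, $O_{16}^-$, $O_{18}^\pm$, $O_{20}^+$) have Lie rank at most $10$, and for them your first idea already works. Since $5$ divides at most one of $q\mp1$, you can choose $\epsilon$ with $\Phi_5(\epsilon q)=k_5(\epsilon q)\in\omega(S)$. As $q>u^3\ge 8$, Lemma~\ref{l:orders} then gives $\frac{9}{10}q^4<2u^{10}$. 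Together with $u^{12}<q^4$ this forces $u^2<20/9$, a contradiction.

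\textbf{The linear and unitary case.} Here your bound $k_i(q)\le k_j(\tau u)<2u^{\varphi(j)}$ with $\varphi(j)\le 12$ is not borderline once constants are kept. Your lower bound on $k_i(q)$ is only $cq^4$ with $c<1$, so you get $q^4<c'u^{12}$ with $c'>1$. That is compatible with $u^{12}<q^4$. Your repair via primes $\equiv1\pmod j$ does exclude a proper divisor of $k_{13}(\tau u)$. It still leaves the equality case $k_i(q)=k_{13}(\tau u)$, which is not a priori a finite check. You also need a separate argument for $u\le3$, because Lemmas~\ref{l:unique}, \ref{l:s2n_even} and~\ref{l:s2n_odd} assume $u>3$.

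\textbf{Missing idea for $S=L^\tau_{13}(u)$: avoid $j=13$ altogether.} For all five possible $L$ you have $\{5,8,10\}\subseteq E(L)$, and $r_5(q)$, $r_8(q)$, $r_{10}(q)$ are pairwise nonadjacent in $GK(S)$. In either alternative of Lemma~\ref{l:ts=tl+1}, at least two of $k_5(q)$, $k_8(q)$, $k_{10}(q)$ divide numbers $k_j(\tau u)$ with \emph{distinct} $j\in\{7,\dots,13\}$. So one of them divides some $k_j(\tau u)$ with $j\le12$, hence $\varphi(j)\le10$. Then $q^4/10<\frac43u^{10}$ and $u^{12}<q^4$ give $u^2<40/3$, contradicting $u\ge4$.

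\textbf{For $S=L^\tau_{14}(u)$.} Here $m$ is even and Lemma~\ref{l:ts=tl+1} is unavailable, so use the disjoint neighbourhoods through Lemmas~\ref{l:s2n_even} and~\ref{l:s2n_odd}. For $L=S_{10}(q)$ or $O_{11}(q)$, first rule out $k_i(q)\mid k_{14}(\tau u)$ by size, since $\varphi(14)=6$. These lemmas then force $u-\tau\mid14$, so $\tau u\in\{-13,+8\}$, with $k_i(q)\mid k_{13}(\tau u)$ for some $i\in\{5,10,12\}$. Only now, with $u$ fixed, does your congruence trick finish the job. Discarding the prime factors of $k_{13}(\tau u)$ not congruent to $1$ modulo $i$ gives $q^4/10<k_{13}(\tau u)/79<u^{12}/39$, which contradicts $u^3<q$.
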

\begin{proof}
Assume the opposite. By Lemma~\ref{l:d-estim}, we have 
\begin{equation}\label{e:dif2}
u^3<q.    
\end{equation}
In particular, $q\geq 9$. By Table~\ref{tab:5tL8}, we see that either $\prk(S)\leq 10$, or $S=L_m^\tau(u)$ with $m=13,14$. 

Noting that $5,10\in J(L)$ and choosing $\epsilon\in\{+,-\}$  such that $(q-\epsilon,5)=1$,
we infer that $\Phi_5(\epsilon q)\in \omega(S)$. If  $\prk(S)\leq 10$, then  $9q^4/10<2u^{10}$ by Lemma \ref{l:orders}. Together with \eqref{e:dif2}, this yields $u^{2}<20/9$, a contradiction. If $S=L_m^\tau(u)$, 
where $u=2,3$, then $q=9,11$ if $u=2$ and $29\leq q\leq 43$ if $u=3$ and we verify that $r_{13}(\tau u)\not\in \pi(L)$. 

Let $S=L_{13}^\tau(u)$, where $u>3$. Applying Lemma \ref{l:ts=tl+1}, we conclude that there are $i\in\{5,10,8\}$ and $7\leq j\leq 12$ such that $k_i(q)$ divides $k_j(\tau u)$. Then $q^4/10<4u^{10}/3$. Together with \eqref{e:dif2}, this yields $u^{2}<40/3$ and so $u<4$, a contradiction. 

Let $S=L_{14}^\tau(u)$, where $u>3$. If $L\in\{S_{12}(q), O_{13}(q), O_{12}^-(q)\}$, then 
$u-\tau$ divides $14$ and $k_{12}(q)$ divides $k_{13}(\tau u)$ by Lemma \ref{l:s2n_even}. Let $L\in\{S_{10}(q), O_{11}(q)\}$ and suppose that there is  $i\in\{5,10\}$ such $k_i(q)$ divides $k_{14}(\tau u)$. Then $q^4/10<4u^6/3$, contradicting \eqref{e:dif2}. By Lemma \ref{l:s2n_odd},  we again have that $u-\tau$ divides $14$ and one of $k_5(q)$, $k_{10}(q)$ divides $k_{13}(\tau u)$. Thus $\tau u=-13$ or $+8$ and $k_j(q)$ divides $k_{13}(\tau u)$ for $j\in\{12,5,10\}$. Dropping prime divisors of $k_{13}(\tau u)$ not congruent 1 modulo $j$, we deduce that $q^4/10<k_j(q)\leq k_{13}(\tau u)/79<u^{12}/39$, contradicting \eqref{e:dif2}.
\end{proof}

Now Proposition \ref{p:2} follows from Lemmas \ref{l:rest on t(S)}, \ref{l:68} and \ref{l:57}.

\begin{lemma}\label{l:56} If $t(L)=5$, then $t(S)\neq 6$.
\end{lemma}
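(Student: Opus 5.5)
Assume the contrary: $t(L)=5$ and $t(S)=6$. Then $L$ is one of $S_{10}(q)$, $O_{11}(q)$, $S_{12}(q)$, $O_{13}(q)$, $O_{12}^-(q)$. By Table~\ref{tab:5tL8}, $S$ is one of $L_{11}^\tau(u)$ (with $L_{11}(2)$ excluded), $L_{12}^\tau(u)$, $S_{14}(u)$, $O_{15}(u)$, $O_{14}^\pm(u)$ or $O_{16}^+(u)$. Lemma~\ref{l:d-estim} is not available because $t(S)=6<7$. The first step is therefore a crude size comparison via exponents. By Lemma~\ref{l:exp},
\[
\tfrac{1}{\alpha(S)}u^{\gamma(S)}\le \exp S\le \exp L\le \beta(L)\,p\,q^{\gamma(L)} ,
\]
with $\gamma(L)\in\{20,24\}$ and $\gamma(S)\in\{30,36,42,46\}$. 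This gives roughly $u^{\gamma(S)}\lesssim q^{\gamma(L)+1}$, so $u$ is substantially smaller than $q$; for example, $u^{36}\lesssim q^{21}$ when $L=S_{10}(q)$ and $S=S_{14}(u)$.

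The second step is to force a large primitive divisor of $q$ into a small cyclotomic factor of $u$. For $i\in J(L)$ we know $k_i(q)\in\omega(S)$ and $t(r_i(q),S)\ge5$. Since $t(S)=6$, $J(S)=E(S)$ and the primes with $t(r,S)\ge 5$ are essentially those with $\varphi(r,S)$ in the top range. Hence $R_i(q)$ lies in some $R_j(\pm u)$ with $j$ at most about $2\prk(S)$, or, for the few non-large primes, in $R_{(m-1)/2}(\tau u)$ by Lemma~\ref{l:ts=tl+1}.

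I would then split by the type of $S$, using the disjoint-neighbourhood primes $r_{2n}(q)$ and $r_n(q)$ (respectively $r_{n-1}(\epsilon q)$) of $L$ from Lemma~\ref{l:disjoint}, together with Lemmas~\ref{l:unique}, \ref{l:s2n_even} and \ref{l:s2n_odd}.
\begin{itemize}
\item If $S=L^\tau_{11}(u)$ or $L^\tau_{12}(u)$, these lemmas show that $k_{2n}(q)$, or one of $k_5(\pm q)$, $k_{10}(q)$, divides $k_{11}(\tau u)$ or $k_{12}(u)$. The relevant part of $q$ is $q^4$ or $q^{\varphi(2n)}$, which is about $u^{10}$ or smaller. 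More importantly, the other large primes $r_8(q)$, $r_{12}(q)$, etc. must divide $k_j(\tau u)$ with $6\le j\le 11$. This gives an inequality $q^4/c<\tfrac43 u^{\varphi(j)}$, which I would try to play against the exponent bound.
\item If $S$ is symplectic or orthogonal, Lemma~\ref{l:unique}(iii)--(v) sends one of $k_{2n}(q)$, $k_n(\pm q)$ into $k_{14}(u)$ or $k_7(\pm u)$ (or into $k_{12}(u)$ for $O_{14}^\tau$). I would use $k_7(u)\le \tfrac43u^6$ against $k_{10}(q)k_5(q)$ (via the neighbourhood argument as in the proof for $L_{10}$) or against $k_{12}(q)\ge q^4/2$.
\end{itemize}

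In each case the target is an inequality of the form $q^{a}<C u^{b}$ with $b/a$ small. Combining it with $u^{\gamma(S)}<C' q^{\gamma(L)+1}$ should force $q\le 9$ or $u\le 3$. Those residual small cases would be killed by explicit arithmetic, checking that some $k_8(q)$, $k_{12}(q)$ or $r_{11}(\tau u)$ is not in the other spectrum, as in Lemma~\ref{l:u>3}. The possibility $u\le3$ also needs separate treatment at the outset, since Lemma~\ref{l:unique} assumes $u>3$; I would handle it first by the same divisor-check method used in Lemma~\ref{l:u>3}.

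I expect the main obstacle to be the linear and unitary $S$ of dimension $11$ and $12$. There the exponent bound is weak: $\gamma(S)=42,46$ against $\gamma(L)+1\le25$ gives only about $u^{42}\lesssim q^{25}$. Meanwhile the best divisibility gives only $q^4\lesssim u^{10}$, which is not contradictory. So I would need to place two independent large primes of $L$ (say $r_8(q)$ and $r_{12}(q)$, or $r_{10}(q)$ together with $r_5(q)$) into a single $k_j(\tau u)$ with $j\le 10$. I would try to do this using adjacency: such primes are adjacent in $GK(L)$ and must remain large in $S$, so by Lemma~\ref{l:tL} they share one $e(\cdot,\tau u)$. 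Alternatively, I would argue as for $L^\tau_{11}(u)$ in the proof of Theorem~\ref{t:l9l10}, deriving $r_ar_b\in\omega(S)\setminus\omega(G)$ from $E(S)=J(S)$. Either route strengthens the inequality to $q^{8}\lesssim u^{10}$, which suffices.
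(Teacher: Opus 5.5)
Your framework matches the paper's: exponent bounds from Lemma~\ref{l:exp} are played against cyclotomic divisibilities forced by the disjoint-neighbourhood pair through Lemmas~\ref{l:unique}, \ref{l:s2n_even}, \ref{l:s2n_odd} and \ref{l:ts=tl+1}, followed by finite checks. However, the step you single out as the main obstacle rests on a mechanism that cannot work.

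You want to place two large classes of $L$, such as $R_5(q),R_{10}(q)$ or $R_8(q),R_{12}(q)$, in a single $R_j(\tau u)$ to reach $q^8\lesssim u^{10}$. Likewise you want to put $k_5(q)k_{10}(q)$ into $k_7(u)$ ``as for $L_{10}$''. For symplectic or orthogonal $L$ with $n\in\{5,6\}$, these classes are nonadjacent in $GK(L)$ by Lemma~\ref{l:adj_criteria}. For $n=5$, the pair $r_5(q),r_{10}(q)$ is precisely the disjoint-neighbourhood pair of Lemma~\ref{l:disjoint}. The product trick in the $L_{10}(q)$ case worked only because $5\mid 10$ there. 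Any two primes of one $R_j(\tau u)$ are adjacent in $GK(S)$, hence in $GK(G)=GK(L)$, so such products never divide a single $k_j(\tau u)$. The only mutually adjacent distinct large classes are $R_3(q),R_6(q)$ for $n=6$, and $k_3(q)k_6(q)$ is again only of size about $q^4$. So $q^8\lesssim u^{10}$ is unattainable. The argument from the $L_{11}$ case of Theorem~\ref{t:l9l10} does not produce it either; read correctly, it yields the conclusion described next.

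The missing idea is to use nonadjacency the other way round: pairwise nonadjacent classes must lie in pairwise \emph{distinct} sets $R_j(\tau u)$.
\begin{itemize}
\item For $S=L_{11}^\tau(u)$: Lemma~\ref{l:ts=tl+1} sends the classes $R_5(q),R_8(q),R_{10}(q)$ into $R_j(\tau u)$ with $6\le j\le 11$. The possible exception is the class containing a non-large prime; that prime lies in $R_5(\tau u)$ and then excludes $j=10$ for the others. At most one class can use $j=11$, so some $k_i(q)$ divides $k_j(\tau u)$ with $\varphi(j)\le 6$. This gives $q^4\lesssim u^6$, which against $u^{42}\lesssim q^{21}$ bounds $u$ and leaves a finite check.
\item For $L$ in the $S_{12}$-family: Lemma~\ref{l:s2n_even} already puts $R_{12}(q)$ into $R_{11}(\tau u)$. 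The same pigeonhole, using $k_3(q)k_6(q)$ in the non-large case, lands in $j\in\{6,8,10\}$ and gives $q^4<8u^4$. Against $u^{42}\lesssim q^{25}$ this leaves only small $q$.
\item For $S=L_{12}^\tau(u)$ there is no real obstacle. If $u-\tau\nmid 12$, Lemma~\ref{l:unique}(i) puts one member of the disjoint pair into $R_{12}(\tau u)$, so $q^4\lesssim u^4$. If $u-\tau\mid 12$, then $u$ is bounded but $q$ is not. You then need an extra arithmetic step, using the prime divisors of $k_{11}(\tau u)$ congruent to $1$ modulo $5$, which your plan does not supply.
\end{itemize}

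Two smaller points. First, $J(S)=E(S)$ is false for $S=L^\tau_{12}(u)$, $S_{14}(u)$ and $O_{15}(u)$. Second, the case $S=O_{14}^+(5)$, left open by Lemmas~\ref{l:unique}(iv) and \ref{l:s2n_even}(i), must be eliminated separately.
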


\begin{proof}
Assume the opposite. Then $S$ is one of $L^{\pm}_{11}(u)$, $L^{\pm}_{12}(u)$, $O_{15}(u)$, $S_{14}(u)$, $O^\pm_{14}(u)$, $O^+_{16}(u)$. 

We claim that $q\geq 5$ and  $u\geq 4$ and $S\neq O_{14}^+(5)$. Observe that $\Phi_5(\epsilon q)\in \omega(S)$ for any $\epsilon\in\{+,-\}$ such that $(q-\epsilon,5)=1$. Arguing as in the proof of Lemma \ref{l:u>3}, it is not hard to verify that $q\neq 3$. If $u\leq 3$, then $5q^4/6<\Phi_5(\epsilon q)< 3^{12}/2$, and so $q\leq 23$. If $S=O_{14}^+(5)$, then $7q^4/8<5^8/4$, whence $q\leq 17$. Now we verify that for every $u\in\{2,3,5\}$ and $q\leq 23$, there is $r\in R_5(q)$ such that $e(r,u)\geq 30$, contradicting the fact that $k_5(q)\in\omega(S)$.

Suppose that $L\in\{S_{10}(q), S_{11}(q)\}$.  Then $r_5(q)$ and $r_{10}(q)$ have disjoint neighborhoods in $GK(S)$ by Lemma \ref{l:disjoint}.

Let $S=S_{14}(u)$, $O_{15}(u)$, or $O_{16}^+(u)$. Applying Lemma \ref{l:unique}, we infer that $k_5(q)k_{10}(q)$ divides $k_7(u)k_{14}(u)$, whence $q^{10}<2du^{14}$, where $d=(5,q^2-1)$. On the other hand, $2u^{36}/5<\exp(S)\leq \exp(L)<20q^{21}$. It follows that $q\leq 7$, whence   $d=1$ and $q=3$, a contradiction. 

Let $S=O_{14}^\tau(u)$, where $\tau u\neq +5$, or $S=L_{12}^\tau(u)$. In the latter case, assume that $u-\tau$ does not divide $12$. By Lemma \ref{l:unique}, it follows that $k_5(\epsilon q)$ for some $\epsilon\in\{+,-\}$ divides $k_{12}(u)$ and so $5q^4/6d<u^4$, where $d=(5,q-\epsilon)$. By Lemma \ref{l:disjoint}(ii), we see that $(u^7-\tau)/(4,u-\tau)$ or $(u^{11}-\tau)/(12, u-\tau)$ divides $(q^5+\epsilon)/2$. In either case, $(u^7-1)/4\leq (q^5+1)/2$ and hence $u^7\leq 2q^5+3$. Thus $q^7<(6d/5)^{7/4}u^7<(6d/5)^{7/4}(2q^5+3)$, whence $q\leq 5$. Then $d=1$ and $q<3$. 

If $S=L_{12}^\tau(u)$, where $u-\tau$ divides $12$, and none of $k_5(q)$, $k_{10}(q)$ divides $k_{12}(u)$, then some of them, say $k_5(\epsilon q)$ divides $k_{11}(\tau u)$ by Lemma \ref{l:s2n_odd}. This implies that $(u^{11}-\tau)/(12, u-\tau)$ divides $(q^5-\epsilon)/2$, in particular, $u^{11}<6q^6+7$. Taking prime divisors of $k_{11}(\tau u)$ congruent 1 modulo 5, we conclude that $u=5$ or $13$ and $k_5(\epsilon q)\in\{5281, 18041, 12207031\}$. Since $u\geq 5$, it follows that $q\geq 23$, so  $k_5(\epsilon q)=12207031$. If $(5,q-\epsilon)=1$, then $q(q^2+1)(q-1)=2\cdot 3\cdot 5\cdot 11\cdot 71\cdot 521$, whence $q\in\{11,71,521\}$ and $(q^2+1_/2$ divides $5\cdot$, a contradiction. Similarly, if  $(5,q-\epsilon)=5$, then $q(q^2+1)(q-1)=2\cdot73\cdot 107\cdot3907$, so $(q^2+1)/2=73$, a contradiction.

Let $S=L_{11}^\tau(u)$. By Lemma \ref{l:ts=tl+1}, there is $i\in \{5,10,8\}$ and $6\leq j\leq 10$ such that $k_i(q)$ divides $k_j(u)$,  whence $q^4/2d<2u^6$, where $d=(5,q^2-1)$. Since $2u^{42}/118<\exp(S)\leq\exp(L)<20q^{21}$, it follows that $u^2<(1180)^{1/21}q$. Thus  $u^8<(1180)^{4/21}q^4<(1180)^{4/21}4du^6$, which yields $u\leq 8$. Solving the system $q^4<4du^6$, $u^2<(1180)^{1/21}q$ for $u=4,5,7,8$, we derive that $u=5$, $q=19$ and $i=10$. This is a contradiction since $k_{10}(19)=11\cdot 2251$  and $e(2251,5)=1125$.

Now suppose that $L\in\{S_{12}(q)$, $O_{13}(q)$, $O_{12}^-(q)\}$. Lemma \ref{l:s2n_even}
implies that $S$ is linear or unitary. If $S=L^\tau_{12}(u)$, then $u-\tau$ divides $12$ and $k_{12}(q)$ divides $k_{11}(\tau u)$, we derive a contradiction as above.

Let  $S=L^\tau_{11}(u)$. Then $k_{12}(q)$ divides $k_{11}(\tau u)$. If $R_i(q)\subseteq R_{j}(\tau u)$ for $i\in \{5,10,8\}$ and $j\in\{10,8,6\}$, then $q^4/6<4u^4/3$. If not, then by Lemma \ref{l:ts=tl+1}, it follows that every $r\in R_3(q)\cup R_6(q)$ is large with respect to $S$ and $k_3(q)k_6(q)$ divides  $k_{j}(\tau u)$ for $j\in\{10,8,6\}$, whence $q^4/3<4u^4/3$. In either case, $q^4<8u^4$. Since $2u^{42}/118<16q^{25}$, we have $q^{42}<(8)^{21/2}\cdot 118\cdot 8q^{25}$. This yields $q=5$ and then $u<4$. 
\end{proof}

\begin{lemma}\label{l:55}
If $t(L)=5$, then $t(S)\neq 5$.
\end{lemma}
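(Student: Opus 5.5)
We argue by contradiction, in the same style as Lemmas \ref{l:57} and \ref{l:56}. Assume $t(S)=t(L)=5$. By Table \ref{tab:5tL8}, $S$ is one of $L^\pm_9(u)$ ($u\neq2$ for $+$), $L^\pm_{10}(u)$, $L_{11}(2)$, $S_{10}(u)$, $O_{11}(u)$ ($u\neq 2$), $S_{12}(u)$, $O_{13}(u)$, $O^-_{12}(u)$, or $O^-_{14}(2)$. Since $v\neq p$, every $i\in J(L)$ satisfies $R_i(q)\cap(\pi(K)\cup\pi(\overline G/S))=\varnothing$ and $k_i(q)\in\omega(S)$. Lemma \ref{l:ts=tl} then places each $k_i(q)$, $i\in J(L)$, into some $k_j(u)$ with $j\in E(S)$, or into a product $k_{j_1}(u)k_{j_2}(u)$ with $j_1,j_2\in J(S)\setminus E(S)$.

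The first step is to dispose of small fields, as in Lemma \ref{l:u>3} and at the start of Lemma \ref{l:56}. I would show $q\geq5$ and $u\geq4$, and exclude $O^+_{14}(5)$-type exceptions, by comparing exponents through Lemma \ref{l:exp}. Both groups now have the same $(\alpha,\beta,\gamma)$ family. For instance, $S_{10}$ vs $S_{12}$ gives $\frac{1}{5}u^{20}\lesssim 16 q^{25}$, and the $u\le3$ sporadic cases ($L_{11}(2)$, $O^-_{14}(2)$, $u\in\{2,3\}$) are handled by exhibiting primes $r_j(u)$ whose order $e(r,q)$ is too large for $\pi(L)$. The key tool afterwards is the pair of primes with disjoint neighbourhoods. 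For $L=S_{2n}(q),O_{2n+1}(q)$ with $n=5$ odd these are $R_{10}(q)$ and $R_5(q)$. For $n=6$ they are $R_{12}(q)$ and $R_5(\epsilon q)$ for both signs $\epsilon$. In both cases Lemma \ref{l:disjoint} says the pair still has disjoint neighbourhoods in $GK(S)$, so Lemma \ref{l:unique} pins down which $R_j(u)$ contain them.

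Next, split by the type of $S$.

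\textbf{$S$ symplectic or orthogonal with the same $t$.} If $S$ has the same type and rank as $L$, Lemmas \ref{l:s2n_even} and \ref{l:s2n_odd} force $k_{2n}(q)\mid k_{2m}(u)$ and $k_n(q)\mid k_m(u)$. Lemma \ref{l:disjoint}(ii) then gives divisibility between the numbers $m_i(S)$ and $m_i(L)$, for example $(u^5+1)/2$ dividing $(q^5+1)/2$. This yields a two-sided size comparison $u\approx q$. I would then use a third large prime, such as $r_8(q)$ or $r_4$, and a congruence argument (prime divisors of $k_j(u)$ are $\equiv1\pmod j$) to force $k_i(q)=k_i(u)$. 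That equality should give $q=u$, contradicting $v\neq p$. If $S$ has a different rank or type from $L$ (e.g. $L=S_{10}$, $S=S_{12}$), Lemma \ref{l:s2n_even}(i) excludes most combinations directly. The rest fall to an inequality $q^a<cu^b$ from the divisibility, combined against the exponent inequality, which leaves only $q\le 7$ and then a finite check.

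\textbf{$S$ linear or unitary of dimension $9$ or $10$.} Here Lemmas \ref{l:s2n_even}(ii),(iii) and \ref{l:s2n_odd} give $k_{2n}(q)\mid k_{m-1}(\tau u)$ or $k_m(\tau u)$, together with the constraint that $u-\tau$ divides $m$ when $m=10$. That constraint leaves finitely many $\tau u$, to be checked by the congruence $r\equiv1\pmod{2n}$, as done for $L_{12}^\tau$ in Lemma \ref{l:56}. For $m=9$ I would combine $k_{2n}(q)\mid k_9(\tau u)$ with the fact that $k_5(\pm q)$ and $k_8(q)$ must land in $k_j(\tau u)$ with $5\le j\le 8$. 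Using the $m_i$ column of Lemma \ref{l:disjoint} in both directions then gives incompatible bounds, $u^8\lesssim q^{5}$ against $q^4\lesssim u^4$.

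I expect the main obstacle to be the equal-type case, where $S$ and $L$ have the same $(\alpha,\beta,\gamma)$. There the exponent estimate alone is too weak to separate $u$ from $q$. The argument must instead squeeze a precise equality out of the $m_i(L)=m_i(S)$ divisibilities, probably showing that $\mu$-elements coincide, and then compare $p$-parts or use $v$-adjacency to reach a contradiction.
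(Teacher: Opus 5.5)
Your overall plan matches the paper's: small fields first, then the disjoint-neighbourhood pair of Lemma~\ref{l:disjoint} fed into Lemmas~\ref{l:unique}, \ref{l:s2n_even} and \ref{l:s2n_odd}, then size bounds from the $m_i$ columns and slot-counting via Lemma~\ref{l:ts=tl}. For linear and unitary $S$ your argument is essentially the paper's. The genuine gap is the case you flag yourself, $S$ symplectic or orthogonal.

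\textbf{Rank $6$.} For $L,S\in\{S_{12},O_{13},O_{12}^-\}$ you already have everything. Since $k_{12}(a)=\Phi_{12}(a)$ exactly, $k_{12}(q)\mid k_{12}(u)$ gives $q\le u$, and $(u^6+1)/(2,u-1)\mid (q^6+1)/2$ gives $u\le q$. So $u=q$, which is impossible since $v\ne p$; no third prime is needed.

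\textbf{Rank $5$.} For $L,S\in\{S_{10},O_{11}\}$ your setup is wrong and the finish is missing. Lemma~\ref{l:s2n_odd} is about linear and unitary $S$ only. Lemma~\ref{l:unique}(iii) places $r_5(q)$ and $r_{10}(q)$ in $R_5(u)$ and $R_{10}(u)$ only up to order, so you may not assume $k_5(q)\mid k_5(u)$. In either order the $m_i$-divisibilities give $u<q$. This does not yet contradict $k_5(q)k_{10}(q)\mid k_5(u)k_{10}(u)$, because of the factors $(5,q^2-1)$ and $(5,u^2-1)$. The missing step is to pick $\epsilon$ with $(5,q-\epsilon)=1$ and compare a single factor:
$k_5(\epsilon q)=\Phi_5(\epsilon q)\ge\Phi_{10}(q)>\Phi_5(q-1)\ge\Phi_5(u)\ge k_5(\pm u)$,
which contradicts $k_5(\epsilon q)\mid k_5(\pm u)$.

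Your substitute, a third large prime plus congruences forcing $k_i(q)=k_i(u)$ and hence $q=u$, is not an argument as it stands. Nothing you cite forces such an equality, and for even $u$ an equality like $k_8(q)=k_8(u)$ means $q^4=2u^4+1$, not $q=u$. It can be repaired with $r_8(q)$. The slots $5,10$ of $E(S)=\{3,5,6,8,10\}$ are occupied, and $k_3(u),k_6(u)<k_8(q)$ as $u<q$, so $k_8(q)\mid k_8(u)$. With $u<q$ this forces $u$ even and $q^4=2u^4+1$. That is impossible, because $u$ is a power of $2$ while $(q^2+1)/2>1$ is odd. You would have to supply this.

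\textbf{Smaller points.}

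First, the cross case $L\in\{S_{10},O_{11}\}$, $S\in\{S_{12},O_{13},O_{12}^-\}$ is not covered by Lemma~\ref{l:s2n_even}(i). That lemma needs $L$ of even rank and disposes of the reverse pair instead. Use Lemma~\ref{l:unique}(v) to get $k_5(\epsilon q)\mid k_{12}(u)$. Then $(u^6+1)/(2,u-1)\mid (q^5-\epsilon)/2$, so $u^6<q^5$, and $5q^4/6d<u^4$ with $d=(5,q-\epsilon)$; together these give $q\le 13$. Your exponent-only variant yields only $q^3<40(6d/5)^6$, a much longer check than ``$q\le 7$''.

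Second, for $L$ of rank $6$ and $S=L_9^\tau(u)$, the $m_1$-divisibility gives $u^8<9q^6$, not a bound of the form $u^8<cq^5$. The paper closes this case by noting that $k_5(q)$, $k_8(q)$, $k_{10}(q)$ and $k_3(q)k_6(q)$ must fill the four slots $5,\dots,8$. So one of them lies in $k_6(\tau u)<2u^2$.

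Third, comparing exponents bounds $u$ in terms of $q$, never $q$ in terms of $u$, so it cannot dispose of $u\le 3$. There you need $\Phi_5(\epsilon q)\in\omega(S)$ together with Lemma~\ref{l:orders} to bound $q$ before checking primes, as in Lemma~\ref{l:56}.
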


\begin{proof}

Assume the opposite. Then $S$ is one of $L^{\pm}_{9}(u)$, $L^{\pm}_{10}(u)$, $L_{11}(2)$,  $S_{10}(u)$, $O_{11}(u)$, $S_{12}(u)$, $O_{13}(u)$, $O^-_{12}(u)$. 

Using that at least one of $k_5(q)$ and $k_{10}(q)$ lies in $\omega(S)$ and arguing as in the proof of Lemma \ref{l:56}, it is not hard to verify that $q\geq 5$ and $u\geq 4$. 

Suppose that $L\in\{O_{13}(q), S_{12}(q), O_{12}^-(q)\}$. Then $S\neq S_{10}(u),O_{11}(u)$ by Lemma \ref{l:s2n_even}.
By the same lemma, if $S\in \{S_{12}(u), O_{13}(u), O_{12}^-(u)\}$, then $k_{12}(q)$ divides $k_{12}(u)$, and so $(u^6+1)/(2,u-1)$ divides $(q^6+1)/2$.  The first divisibility implies that $q\leq u$, while the second implies that $u\leq q$, a contradiction. If $S=L_{10}^\tau(u)$, then $u-\tau$ divides $10$ and $k_{12}(q)$ divides $k_9(\tau u)$. Then $\tau u=-4,-9,+11$ and $k_{12}(q)\in\{37\cdot 109, 530713, 1772893\}$, which is impossible.   

Let $S=L_9^\tau(u)$. Lemma \ref{l:s2n_even} implies that $k_{12}(q)$ divides $k_9(\tau u)$ and hence $(u^9-\tau)/((9,u-\tau)(u-\tau))$ divides $(q^6+1)/2$, whence $u^8<9q^6$. By Lemma \ref{l:ts=tl}, for every $i\in\{5,10,8\}$ there is $j\in\{8,7,6,5\}$ such that $k_i(q)$ divides $k_j(\tau u)$, and also there is $j\in\{8,7,6,5\}$ such that $k_3(q)k_6(q)$ divides $k_j(u)$. So $q^4/10<k_6(\tau u)<2u^2$. It follows that $q^8<20^2u^4<20^23q^3$, which yields $q=3$.

Suppose that $L\in\{S_{10}(q), O_{11}(q)\}$.  

If $S\in \{O_{11}(u), S_{10}(u)\}$, then by Lemma \ref{l:unique}, the number $k_5(q)$ divides one of $k_5(u)$, $k_{10}(u)$, while $k_{10}(q)$ divides the other, in particular, $k_5(q^2)$ divides $k_5(u^2)$. Then $(u^5\pm 1)/(2,u-1)$ divides $(q^5\pm 1)/2$, and so $u^{10}-1$ divides $q^{10}-1$, whence $u<q$. As $\Phi_5(x)$ increases on $(1,+\infty)$, we conclude that $\Phi_5(u^2)<\Phi_5(q^2)$, which yields $(5,u^2-1)<(5,q^2-1)$, and therefore  $(5,u^2-1)=1$. Choosing $\epsilon$ such that $(5,q-\epsilon)=1$, we see that $k_5(\epsilon q)=\Phi_5(\epsilon q)\geq \Phi_{10}(q)>\Phi_5(q-1)\geq \Phi_5(u)\geq k_5(\pm u)$, a contradiction.

Let $S\in \{O_{13}(u), S_{12}(u), O_{12}^-(u)\}$. Then $k_5(\epsilon q)$ 
divides $k_{12}(u)$ for some $\epsilon$  by Lemma \ref{l:unique}. This implies that $(u^6+1)/(2,u-1)$ divides $(q^5-\epsilon)/2$. Hence $5q^4/6d<u^4$, where $d=(5,q-\epsilon)$ and $u^6<q^5$. It follows that $q^6<(6d/5)^{3/2}q^5$, whence $q\leq 13$. If $d=1$, then $q<3$. So either $\epsilon q=-9$, $u=5$ or $\epsilon q=+11$, $u=7$. In these cases $k_5(\epsilon q)$ does not divide $k_{12}(u)$.

Let $S=L_{10}^\tau(u)$. Suppose that $k_5(\epsilon q)$ 
divides $k_{10}(\tau u)$ for some $\epsilon$. Then $(u^{10}-1)/((10,u-1)(u-1)$ divides $(q^5-\epsilon)/2$. Hence $5q^4/6d<2u^4$,$d=(5,q-\epsilon)$, and $u^{9}/10<q^5$. It follows that $q^9<(12d/5)^{9/4}10q^5$, whence $q\leq 7$. Then $d=1$ and $q<3$. Thus by Lemma \ref{l:s2n_odd}, it follows that $u-\tau$ divides $10$ and one of $k_5(q)$ or $k_{10}(q)$ divides $k_9(\tau u)$. Then $\tau u=-4,-9,+11$ and non of primes divisors of $k_9(\tau u)$ is congruent 1 modulo $5$.

Let $S=L_9^\tau(u)$. Then $k_5(\epsilon q)$ divides $k_{9}(\tau u)$ for some $\epsilon$. This implies that  $(u^{9}-\tau)/((9,u-\tau)(u-\tau)$ divides $(q^5-\epsilon)/2$ and so $u^{8}<(9,u-\tau)q^5$. By Lemma \ref{l:ts=tl}, each of $k_5(-\epsilon q)$ and $k_8(q)$ divides $k_j(\tau u)$ for some $j\in\{8,7,6,5\}$ and, therefore, at least one of these numbers divides $k_j(\tau u)$ with $\varphi(j)\leq 4$. Hence $5q^4/6d<4u^4/3$, where $d=(5,q^2-1)$. It follows that $q^8<(8d/5)^{2}(9,u-\tau)q^5$, whence $q\leq 7$. Then $d=1$ and $q<3$. This completes the proof.
\end{proof}

Theorem \ref{t:main2} follows from Lemmas \ref{l:reduction}, \ref{l:rest on t(S)}, \ref{l:57}, \ref{l:56} and \ref{l:55}.

\section*{Acknowledgments}

The authors thank A. V. Zavarnitsine for his help with Mersenne primes, see Remark \ref{r:1} in the introduction.

M. A. Grechkoseeva and A. V. Vasil’ev acknowledge the support of the Russian Science Foundation, Project No. 24-11-00127, https://rscf.ru/en/project/24-11-00127/.



\begin{thebibliography}{10}

\bibitem{86Bang}
A.~S. Bang, {Taltheoretiske Unders{\o}gelser}, \emph{Tidsskrift Math.} \textbf{4} (1886), 70--80, 130--137.

\bibitem{91BrShi}
R.~Brandl and W.~J. Shi, {Finite groups whose element orders are consecutive integers}, \emph{J. Algebra} \textbf{143} (1991), no.~2, 388--400.

\bibitem{94BrShi}
R.~Brandl and W.~J. Shi, {The characterization of $PSL(2,q)$ by its element orders}, \emph{J. Algebra} \textbf{163} (1994), no.~1, 109--114.

\bibitem{08But.t}
A.~A. Buturlakin, {Spectra of finite linear and unitary groups}, \emph{Algebra Logic} \textbf{47} (2008), no.~2, 91--99.

\bibitem{10But.t}
A.~A. Buturlakin, {Spectra of finite symplectic and orthogonal groups}, \emph{Siberian Adv. Math.} \textbf{21} (2011), no.~3, 176--210.

\bibitem{85Atlas}
J.~H. Conway, R.~T. Curtis, S.~P. Norton, R.~A. Parker, and R.~A. Wilson, {Atlas of finite groups}, Clarendon Press, Oxford, 1985.

\bibitem{13Gor.t}
I.~B. Gorshkov, {Recognizability of alternating groups by spectrum}, \emph{Algebra Logic} \textbf{52} (2013), no.~1, 41--45.

\bibitem{16Gr.t}
M.~A. Grechkoseeva, {On spectra of almost simple groups with symplectic or orthogonal socle}, \emph{Siberian Math. J.} \textbf{57} (2016), no.~4, 582--588.

\bibitem{17Gre}
M.~A. Grechkoseeva, {On orders of elements of finite almost simple groups with linear or unitary socle}, \emph{J. Group Theory} \textbf{20} (2017), no.~6, 1191--1222.

\bibitem{18Gr.t}
M.~A. Grechkoseeva, {On spectra of almost simple extensions of even-dimensional orthogonal groups}, \emph{Siberian Math. J.} \textbf{59} (2018), no.~4, 623--640.

\bibitem{25Gr.t}
M.~A. Grechkoseeva, Recognizability of the groups ${PS}p_8(7^m)$ by the set of element orders, \emph{Math. Motes} \textbf{117} (2025), no.~3-4, 538--546.

\bibitem{12GrLyt.t}
M.~A. Grechkoseeva and D.~V. Lytkin, {Almost recognizability by spectrum of finite simple linear groups of prime dimension}, \emph{Siberian Math. J.} \textbf{53} (2012), no.~4, 645--655.

\bibitem{23Survey}
M.~A. Grechkoseeva, V.~D. Mazurov, W.~Shi, A.~V. Vasil'ev, and N.~Yang, Finite groups isospectral to simple groups, \emph{Commun. Math. Stat.} \textbf{11} (2023), 169--194.

\bibitem{24GrPan.t}
M.~A. Grechkoseeva and V.~V. Panshin, {On recognition of low-dimensional linear and unitary groups by spectrum}, \emph{Siberian Math. J.} \textbf{65} (2024), no.~5, 1074--1095.

\bibitem{26GreRod_arxiv}
M.~A. Grechkoseeva and V.~M. Rodionov, On recognition of simple classical groups with prime graph independence number 4 by spectrum, 2026, arXiv:2604.02885 [math.GR].

\bibitem{13GrShi.t}
M.~A. Grechkoseeva and W.~J. Shi, {On finite groups isospectral to finite simple unitary groups over fields of characteristic 2}, \emph{Siberian Electron. Math. Reps.} \textbf{10} (2013), 31--37.

\bibitem{15VasGr1}
M.~A. Grechkoseeva and A.~V. Vasil'ev, {On the structure of finite groups isospectral to finite simple groups}, \emph{J. Group Theory} \textbf{18} (2015), no.~5, 741--759.

\bibitem{19GrVasZv}
M.~A. Grechkoseeva, A.~V. Vasil'ev, and M.~A. Zvezdina, {Recognition of symplectic and orthogonal groups of small dimensions by spectrum}, \emph{J. Algebra Appl.} \textbf{18} (2019), no.~12, 1950230 [33 pages].

\bibitem{20GrZv.t}
M.~A. Grechkoseeva and M.~A. Zvezdina, {On recognition of $L_4(q)$ and $U_4(q)$ by spectrum}, \emph{Siberian Math. J.} \textbf{61} (2020), no.~6, 1039--1065.

\bibitem{Kou}
E.~I. Khukhro and V.~D.~Mazurov (Eds.), {{Unsolved problems in group theory. The Kourovka notebook}}, 2026, arXiv:1401.0300[math.GR].

\bibitem{94Maz.t}
V.~D. Mazurov, On the set of orders of elements of a finite group, \emph{Algebra Logic} \textbf{33} (1994), no.~1, 49--55.

\bibitem{98MazShi}
V.~D. Mazurov and W.~J. Shi, {A note to the characterization of sporadic simple groups}, \emph{Algebra Colloq.} \textbf{5} (1998), no.~3, 285--288.

\bibitem{12MazShi.t}
V.~D. Mazurov and W.~J. Shi, A criterion of unrecognizability by spectrum for finite groups, \emph{Algebra Logic} \textbf{51} (2012), no.~2, 160--162.

\bibitem{26Pan.t}
V.~Panshin, On recognition of simple groups with disconnected prime graphs by spectrum, \emph{Math. Notes}, to appear; see also arXiv:2509.03483[math.GR].

\bibitem{94PrShi}
C.~E. Praeger and W.~J. Shi, {A characterization of some alternating and symmetric groups}, \emph{Comm. Algebra} \textbf{22} (1994), no.~5, 1507--1530.

\bibitem{97Roi}
M.~Roitman, {On Zsigmondy primes}, \emph{Proc. Amer. Math. Soc.} \textbf{125} (1997), no.~7, 1913--1919.

\bibitem{84Shi}
W.~J. Shi, {A characteristic property of ${PSL}_2(7)$}, \emph{J. Aust. Math. Soc. Ser. A} \textbf{36} (1984), 354--356.

\bibitem{86Shi}
W.~J. Shi, {A characteristic property of $A_5$}, \emph{J. Southwest-China Teach. Univ.} \textbf{11} (1986), 11--14.

\bibitem{17Sta}
A.~Staroletov, {On almost recognizability by spectrum of simple classical groups}, \emph{Int. J. Group Theory} \textbf{6} (2017), no.~4, 7--33.

\bibitem{21Sta.t}
A.~M. Staroletov, {Composition factors of the finite groups isospectral to simple classical groups}, \emph{Siberian Math. J.} \textbf{62} (2021), no.~2, 341--356.

\bibitem{26Sta.t}
A.~M. Staroletov, {On recognition of linear and unitary groups by spectrum}, \emph{Siberian Math. J.} \textbf{67} (2026), no.~3.

\bibitem{05Vas.t}
A.~V. Vasil'ev, {On connection between the structure of a finite group and the properties of its prime graph}, \emph{Siberian Math. J.} \textbf{46} (2005), no.~3, 396--404.

\bibitem{15Vas}
A.~V. Vasil'ev, {On finite groups isospectral to simple classical groups}, \emph{J. Algebra} \textbf{423} (2015), 318--374.

\bibitem{08VasGr.t}
A.~V. Vasil'ev and M.~A. Grechkoseeva, {Recognition by spectrum for finite simple linear groups of small dimensions over fields of characteristic $2$}, \emph{Algebra Logic} \textbf{47} (2008), no.~5, 314--320.

\bibitem{09VasGrMaz.t}
A.~V. Vasil'ev, M.~A. Grechkoseeva, and V.~D. Mazurov, {On finite groups isospectral to simple symplectic and orthogonal groups}, \emph{Siberian Math. J.} \textbf{50} (2009), no.~6, 965--981.

\bibitem{14VasSt.t}
A.~V. Vasil'ev and A.~M. Staroletov, {Almost recognizability of simple exceptional groups of Lie type}, \emph{Algebra Logic} \textbf{53} (2015), no.~6, 433--449.

\bibitem{05VasVd.t}
A.~V. Vasil'ev and E.~P. Vdovin, {An adjacency criterion for the prime graph of a finite simple group}, \emph{Algebra Logic} \textbf{44} (2005), no.~6, 381--406.

\bibitem{11VasVd.t}
A.~V. Vasil'ev and E.~P. Vdovin, {Cocliques of maximal size in the prime graph of a finite simple group}, \emph{Algebra Logic} \textbf{50} (2011), no.~4, 291--322.

\bibitem{20YanGrVas}
N.~Yang, M.~A. Grechkoseeva, and A.~V. Vasil'ev, {On the nilpotency of the solvable radical of a finite group isospectral to a simple group}, \emph{J. Group Theory} \textbf{23} (2020), no.~3, 447--470.

\bibitem{04Zav}
A.~V. Zavarnitsine, {Recognition of the simple groups $L_3(q)$ by element orders}, \emph{J. Group Theory} \textbf{7} (2004), no.~1, 81--97.

\bibitem{04Zav1.t}
A.~V. Zavarnitsine, {The weights of irreducible ${SL}_3(q)$-modules in the defining characteristic}, \emph{Siberian Math. J.} \textbf{45} (2004), no.~2, 261--268.

\bibitem{06Zav.t}
A.~V. Zavarnitsine, {Recognition of the simple groups $U_3(q)$ by element orders}, \emph{Algebra Logic} \textbf{45} (2006), no.~2, 106--116.

\bibitem{Zs}
K.~Zsigmondy, {Zur Theorie der Potenzreste}, \emph{Monatsh. Math. Phys.} \textbf{3} (1892), 265--284.

\end{thebibliography}

\end{document}